\documentclass[twoside]{article}

%
\usepackage{bm}
\usepackage{hyperref}
\usepackage{url}
\usepackage{xspace}
\usepackage[algo2e,ruled]{algorithm2e}
\usepackage[authoryear,round]{natbib}
\usepackage{caption}
\usepackage{subcaption}
\usepackage{booktabs}
\usepackage{xcolor}
\usepackage{soul}
\usepackage{paralist,amsmath, amssymb}
\usepackage{algorithm,algorithmic}
\usepackage{mathtools}
\usepackage{amsthm}
\usepackage{calc}

 \usepackage{graphicx}
 \usepackage{float}
\usepackage{xcolor}
\usepackage{graphics}
\usepackage{xspace}
\usepackage{multicol}

 \usepackage{amssymb, multirow, paralist, color}
 \usepackage{textcase, booktabs,makecell}
\usepackage{enumitem}
\def \La {\Lambda}

\def \l {\boldsymbol\ell}

\def \tC {\tilde{C}}

\def \k {\kappa}

\def \y {\mathbf{y}}

\def \x {\mathbf{x}}

\def \gm {\gamma}

\def \D {\mathcal{D}}
\def \z {\mathbf{z}}
\def \u {\mathbf{u}}

\def \w {\mathbf{w}}
\def \R {\mathbb{R}}

\def \l {\boldsymbol\ell}

\def \expmap {{\textnormal{Exp}}}
\def \invexp {\textnormal{Exp}^{-1}}

\def \G {\Gamma}

\def \q {\mathbf{q}}
\def \v {\mathbf{v}}
\def \M {\mathcal{M}}
\def \c {\mathbf{c}}

\def \q {\mathbf{q}}

\def \bs {\bar{\sigma}}
\def \bzt {\bar{\zeta}}
\def \bz {\bar{\z}}

\def \a {\mathbf{a}}
\def \b {\mathbf{b}}

\def \B {\mathcal{B}}

\def \M {\mathcal{M}}

\def \rg {\mathsf{grad}\kern 0.1em}
\def \rh {\mathsf{Hess}\kern 0.1em}

\def \X {\mathcal{X}}
\def \Y {\mathcal{Y}}
\def \Z {\mathcal{Z}}

\def \la {\left\langle}
\def \ra {\right\rangle}
\def \bx {\bar{\x}}
\def \by {\bar{\y}}

\def \lt {\left}
\def \rt {\right}
\def \sumt {\sum_{t=1}^T}

\def \tx {\tilde{\x}}
\def \ty {\tilde{\y}}
\def \tz {\tilde{\z}}

\newtheorem{ass}{Assumption}
\newtheorem{thm}{Theorem}
\newtheorem{cor}{Corollary}
\newtheorem{lemma}{Lemma}
\newtheorem{defn}{Definition}
\newtheorem{remark}{Remark}

\usepackage{microtype}
\usepackage{graphicx}
\usepackage{booktabs} 
\usepackage[accepted]{aistats2024}

\usepackage{soul}
\DeclareRobustCommand{\mhl}[1]{%
  \text{{$#1$}}%
}
%




\begin{document}

%

%











\twocolumn[

\aistatstitle{Extragradient Type Methods for Riemannian Variational Inequality Problems}

\aistatsauthor{ Zihao Hu \And Guanghui Wang \And  Xi Wang \And  Andre Wibisono \And Jacob Abernethy \And  Molei Tao }

\aistatsaddress{Georgia Tech \And  Georgia Tech \And AMSS, China \And Yale University \And  Georgia Tech \\Google Research  \And Georgia Tech }


]







\begin{abstract}
In this work, we consider monotone Riemannian Variational Inequality Problems (RVIPs), which encompass both Riemannian convex optimization and minimax optimization as particular cases. In Euclidean space,  the last-iterates of both the extragradient (EG) and past extragradient (PEG) methods converge to the solution of monotone variational inequality problems at a rate of $O\left(\frac{1}{\sqrt{T}}\right)$ \citep{cai2022finite}. However, analogous behavior on Riemannian manifolds remains  open. To bridge this gap, we introduce the Riemannian extragradient (REG) and Riemannian past extragradient (RPEG) methods. We show that both exhibit $O\left(\frac{1}{\sqrt{T}}\right)$ last-iterate convergence and $O\left(\frac{1}{{T}}\right)$ average-iterate convergence, aligning with observations in the Euclidean case. These results are enabled by judiciously addressing the holonomy effect so that additional complications in Riemannian cases can be reduced and the Euclidean proof inspired by the performance estimation problem (PEP) technique or the sum-of-squares (SOS) technique can be applied again. 
\end{abstract}

\section{Introduction}

Variational inequality problems (VIPs) \citep{kinderlehrer2000introduction,facchinei2003finite} play a pivotal role in mathematical programming, encompassing areas such as convex optimization and minimax optimization. Specifically, for the Euclidean space under the unconstrained setting, the objective of a VIP is to find  $\z^*$ satisfying:
\[
\textstyle\la F(\z^*),\z-\z^*\ra\geq 0\qquad\forall\z\in\R^d,
\]
where $F:\R^d\rightarrow\R^d$  represents an operator. In particular, by setting $\z=\z^*-\eta F(\z^*)$, the solution to an unconstrained VIP can be reduced to identifying the zeros of $F(\cdot)$. At first glance, it might seem intuitive to employ gradient descent (GD) given by
 $\z_{t+1}=\z_t-\eta F(\z_t)$ to solve a VIP.  However, while GD shows convergence for convex optimization tasks, it can, unfortunately, diverge for even monotone VIPs\footnote{Monotone VIP means the operator $F$ is monotone: $\la F(\z)-F(\z'),\z-\z'\ra\geq 0$ holds for any $\z,\z'\in\R^d$.}, irrespective of the step-sizes chosen \citep{facchinei2003finite}. Fortunately, sophisticated methods such as extragradient (EG) \citep{korpelevich1976extragradient}:
 \begin{equation}
 \begin{split}
   &\tz_t=\z_t-\eta F(\z_t)  \\
   &\z_{t+1}=\z_t-\eta F(\tz_t),
 \end{split}
 \end{equation}
and past extragradient (PEG) \citep{popov1980modification}:
\begin{equation}\label{eq:epeg}
\begin{split}
    &\tz_t=\z_t-\eta F(\tz_{t-1}) \\
    &\z_{t+1}=\z_t-\eta F(\tz_t).
\end{split}
\end{equation}
offer solutions to this hurdle. In the unconstrained domain, PEG is equivalent to the optimistic gradient descent ascent (OGDA) technique:
 \[
\tz_{t+1}=\tz_t-2\eta F(\tz_t)+\eta F(\tz_{t-1}).
\]
\begin{table*}[htbp]
    \centering
    \caption{Comparison of our results and previous extragradient type methods on the Riemannian gsc-convex gsc-concave saddle point problems,  where $\zeta$ and $\sigma$ are geometric constants arising from Riemannian cosine laws \citep{zhang2016first,alimisis2020continuous}. Notably, we achieve the first non-asymptotic last-iterate convergence for Riemannian extragradient type methods.}
    \label{tb:table1}
\begin{tabular}{cc}
\toprule[1pt]
Algorithm& Results   \\
\hline 
\addlinespace[3pt]
 RCEG \citep{zhang2022minimax} & average-iterate: $O\lt(\sqrt{\frac{\zeta}{\sigma}}\cdot\frac{1}{T}\rt)$, best-iterate: $O\lt(\frac{\sqrt{\zeta}}{\sigma}\cdot\frac{1}{\sqrt{T}}\rt)$ \\
\addlinespace[3pt]
 ROGDA \citep{wang2023riemannian} & average-iterate: $O\lt(\frac{\zeta}{\sigma}\cdot\frac{1}{T}\rt)$, best-iterate: $O\lt(\frac{\zeta}{\sqrt{\sigma^3}}\cdot\frac{1}{\sqrt{T}}\rt)$ \\
\addlinespace[3pt]
 REG (Theorem \ref{thm:reg}) & average-iterate: $O\lt(\frac{\zeta}{\sigma}\cdot\frac{1}{T}\rt)$, last-iterate: $O\lt(\frac{\zeta}{\sqrt{\sigma^3}}\cdot\frac{1}{\sqrt{T}}\rt)$ \\
\addlinespace[3pt]
 RPEG (Theorem \ref{thm:rpeg}) & average-iterate: $O\lt(\frac{\zeta}{\sigma}\cdot\frac{1}{T}\rt)$, last-iterate: $O\lt(\frac{\zeta}{\sqrt{\sigma^3}}\cdot\frac{1}{\sqrt{T}}\rt)$ \\
\bottomrule[1pt]
\end{tabular}
\end{table*}

Since extragradient type methods are easy to implement, relatively scalable to dimension, and have demonstrated pleasant empirical performance, they have become the standard tools for addressing VIPs and saddle point problems over the past few decades \citep{tseng2000modified,gidel2018variational,hsieh2019convergence}. At present, we understand that for convex-concave saddle point problems, both EG and PEG achieve \(O\left(\frac{1}{T}\right)\) \emph{average-iterate} convergence \citep{nemirovski2004prox,mokhtari2020convergence} and \(O\left(\frac{1}{\sqrt{T}}\right)\) \emph{last-iterate} convergence \citep{golowich2020last,golowich2020tight,gorbunov2022extragradient,gorbunov2022last,cai2022finite}.\footnote{The average-iterate and the last-iterate consider the convergence of $\bz_T\coloneqq\frac{1}{T}\sum_{t=1}^T\tz_t$ and $\z_T$, respectively.} The last-iterate convergence, although slower than the average-iterate convergence, offers two distinct advantages: (i) the last-iterate convergence is an appropriate performance metric even for non-convex-concave games, a condition not met by the average-iterate convergence due to the absence of Jensen's inequality; (ii) in practical scenarios like GAN training, the last-iterate exhibits strong empirical results \citep{daskalakis2018training,chavdarova2019reducing}.

Meanwhile, in recent years, Riemannian convex optimization and minimax optimization have attracted considerable interest \citep{zhang2016first,alimisis2020continuous, ahn2020nesterov,kim2022nesterov,zhang2022minimax,jordan2022first,martinez2023minimax}. However, Riemannian variational inequality problems (RVIPs), the generalized counterpart of Riemannian convex optimization and minimax optimization,
remain relatively underexplored. In this paper, we consider extragradient type methods for the following RVIP: identify
$\z^*$ such that
\[
\la F(\z^*),\invexp_{\z^*}\z\ra\geq 0,\qquad\forall\z\in\M.
\]
Here, $\M$ denotes a $d$-dimensional Riemannian manifold and $F(\cdot)$ is a vector field defined on $\M$. For Riemannian convex-concave saddle point problems, \citet{zhang2022minimax} introduce the Riemannian corrected extragradient (RCEG), while \citet{wang2023riemannian} propose the Riemannian optimistic gradient descent ascent (ROGDA), both achieving $O\left(\frac{1}{T}\right)$ average-iterate convergence. However, the non-asymptotic \emph{ last-iterate} convergence of these methods remains unexplored except for strongly geodesically convex-concave problems \citep{jordan2022first,wang2023riemannian}. This naturally raises the question:

\emph{Do there exist Riemannian analogs of EG and PEG that concurrently exhibit non-asymptotic average-iterate and last-iterate convergence behaviors?}

In this study, we confirm this question and outline the following contributions.
\begin{itemize}
    \item We introduce \underline{R}iemannian \underline{e}xtra\underline{g}radient (REG) and \underline{R}iemannian \underline{p}ast \underline{e}xtra\underline{g}radient (RPEG) as novel first-order methods tailored for monotone RVIPs.
    \item Both REG and RPEG, as detailed in Theorems \ref{thm:eglast} and \ref{thm:peglast},  exhibit $O\left(\frac{1}{\sqrt{T}}\right)$ last-iterate convergence in the context of monotone RVIPs.
    \item In the realm of Riemannian minimax optimization, both REG and RPEG achieve $O\left(\frac{1}{\sqrt{T}}\right)$ last iterate convergence and $O\left(\frac{1}{T}\right)$ average-iterate convergence, as delineated in Table \ref{tb:table1}.
\end{itemize}
For completeness, we have included the proof of the best-iterate convergence for RCEG in Appendix \ref{app:best}. It is beneficial to compare the convergence rates of the algorithms listed in Table \ref{tb:table1}, focusing on the geometric constants $\zeta$ and $\sigma$. Given that $\zeta\geq\sigma$, the average-iterate convergence rate of RCEG algorithm comes with a more favorable geometric constant compared to our REG algorithm. Meanwhile, the geometric constants of ROGDA show similarities to those of our REG and RPEG algorithms. This similarity may arise because all those methods rely on bounding the holonomy distortion. An intriguing open question is how to enhance the last-iterate convergence of  REG and RPEG by optimizing the curvature constants.





We now outline the primary technical challenges and our solutions. State-of-the-art proofs validating the last-iterate convergence of EG and PEG are generally inspired by either the performance estimation problem (PEP) approach \citep{gorbunov2022extragradient,gorbunov2022last} or the sum-of-squares (SOS) technique \citep{cai2022finite}. At their core, both methods cast the estimation of an optimization algorithm's convergence rate as an optimization problem, subsequently obtaining a numerical solution through sequential convex relaxation. This solution inherently offers insights into crafting a proof. However, applying the PEP and SOS techniques directly to the manifold setting proves difficult, largely because they intrinsically depend on the ``interpolation condition" \citep{taylor2017smooth}, which is inherently tied to the Euclidean space, not geodesic metric spaces\footnote{For a deeper exploration of the ``geodesically convex interpolation", readers are referred to \citet[Section 8]{criscitiello2023curvature}.}.

Yet, this challenge does not inherently restrict us from leveraging the insights gleaned from the PEP or SOS methods. In our study, we meticulously craft REG and RPEG based on these insights, demonstrating that validating the last-iterate convergence in manifold contexts can largely mirror the proofs in the Euclidean setting, provided that the \emph{holonomy effect}\footnote{To put it simply, after a vector undergoes parallel transport along a geodesic loop, the end result differs from its original form, a phenomenon termed holonomy.} is handled with care. We posit that this novel approach not only resolves our immediate challenges but could potentially benefit a broader range of Riemannian optimization problems in the future.

\section{Related Work}
In this section, we briefly review prior research on extragradient type algorithms in the Euclidean space and Riemannian minimax optimization.

\textbf{Extragradient Type Methods in Euclidean Space.} \citet{nemirovski2004prox} demonstrate an $O\left(\frac{1}{T}\right)$ average-iterate convergence rate of EG with respect to the primal-dual gap. A comparable result for OGDA is documented by \citet{mokhtari2020convergence}. Building on additional assumptions, specifically the Lipschitz continuity of the Jacobian of $ F $, \citet{golowich2020last, golowich2020tight} are the first to establish an $O\left(\frac{1}{\sqrt{T}}\right)$ last-iterate convergence rate for EG and PEG. This milestone is further developed by \citet{gorbunov2022extragradient}, who bypass the aforementioned assumption and showcase an $O\left(\frac{1}{\sqrt{T}}\right)$ last-iterate convergence using the PEP technique. Various works such as \citet{cai2022finite} and \citet{gorbunov2022last} extend these results to constrained settings.  One intriguing discrepancy that emerges is between the average-iterate convergence $O\left(\frac{1}{T}\right)$ and the last-iterate convergence $O\left(\frac{1}{\sqrt{T}}\right)$ rates. It raises the question: are there accelerated first-order methods that can achieve $O\left(\frac{1}{T}\right)$ last-iterate convergence? Drawing inspiration from the Halpern iteration \citep{lieder2021convergence}, \citet{yoon2021accelerated} introduce the extra anchored gradient (EAG), which achieves $O\left(\frac{1}{T}\right)$ last-iterate convergence rate. 

\textbf{Riemannian Minimax Optimization.} \citet{zhang2022minimax} propose RCEG, which achieves $O\left(\frac{1}{T}\right)$ average-iterate convergence for geodesically convex-concave problems. Our contributions, REG and RPEG, surpass this by attaining both $O\left(\frac{1}{\sqrt{T}}\right)$ last-iterate and $O\left(\frac{1}{T}\right)$ average-iterate convergence rates. \citet{hu2023minimizing} consider how to make RCEG work in the online improper learning setting. \citet{han2023riemannian} put forth the Riemannian Hamiltonian gradient descent, which exhibits linear convergence under the Riemannian Polyak-Lojasiewicz condition. However, this condition predominantly applies to  strongly geodesically convex-concave problems. Other notable contributions include \citet{jordan2022first}, who demonstrate linear last-iterate convergence of RCEG for strongly convex-concave settings, and \citet{wang2023riemannian}, who propose ROGDA. This method achieves $O\left(\frac{1}{T}\right)$ average-iterate convergence and linear last-iterate convergence in convex-concave and strongly convex-concave settings, respectively. It should be noted that while \citet{wang2023riemannian} rely on quantifying the holonomy effect in geodesic quadrilaterals, our approach capitalizes on the analogous effect in geodesic triangles. As we will see later, this is a key observation that enables us to establish the last-iterate convergence within the Riemannian context.  The Riemannian gradient descent ascent (RGDA) in the deterministic setting and the stochastic setting has been considered by \citet{jordan2022first,huang2023gradient}. By leveraging the idea of converting minimax optimization to sequential strongly convex optimization problems, \citet{martinez2023minimax} introduce doubly-looped algorithms specifically designed for Hadamard manifolds. Furthermore, the algorithms proposed in \citet{martinez2023minimax} demonstrate accelerated convergence rates and inherently address the constrained scenario. The quest for singly-looped Riemannian first-order minimax optimization algorithms featuring accelerated rates continues to be a compelling area of investigation. \citet{cai2023curvature} show a curvature-independent linear last-iterate convergence of Riemannian gradient descent for strongly monotone RVIPs. Our approach offers a slower $O\lt(\frac{1}{\sqrt{T}}\rt)$  last-iterate convergence rate, but applies to general monotone RVIPs. 

After finalizing the camera-ready version of our work, we would like to acknowledge the Riemannian stochastic approximation framework proposed by \citet{karimi2022dynamics}. Their framework covers the stochastic versions of our REG and RPEG as special cases, referred to as RSEG and ROG. However, it is important to note that while \citet{karimi2022dynamics} focus on the stochastic setting and aim to establish asymptotic convergence, our work focuses on the deterministic setting and establishes non-asymptotic convergence.
\section{Preliminaries}
In this section, we provide an overview of Riemannian geometry, RVIPs, and Riemannian minimax optimization. We also introduce some assumptions that are necessary to establish our key results.

\subsection{Riemannian Geometry } We outline the foundational concepts of Riemannian geometry here. For a more in-depth exposition, the reader is directed to \citet{petersen2006riemannian,lee2018introduction}. We consider a $d$-dimensional smooth manifold $\M$, a topological space where every point possesses an open neighborhood that can be smoothly mapped to an open subset in $\R^d$. For each point $\x$ on the manifold $\M$, there are $d$ directions (tangent vectors). Beginning at $\x$ and moving infinitesimally in any of these directions remains within $\M$. The tangent space at $\x$, symbolized as $T_{\x}\M$, is a vector space comprising all these tangent vectors. A Riemannian manifold is a smooth manifold equipped with a continuously differentiable Riemannian metric. For any point $\x\in\M$, this metric allows the calculation of the inner product $\la\u,\v\ra_{\x}$ and the magnitude $\|\u\|_{\x}=\sqrt{\la\u,\u\ra_{\x}}$ for tangent vectors $\u,\v\in T_{\x}\M$. We sometimes omit the reference point $\x$ when it is clear from the context.

A geodesic segment connecting two points $\x,\y\in\M$ is a constant-speed curve that locally minimizes the distance between $\x$ and $\y$, serving as a natural extension of line segments in Euclidean space. Formally, $\gm(t):[0,1]\rightarrow\M$ represents a geodesic segment, with $\gm(0)=\x$, $\gm(1)=\y$, and its initial velocity given by $\dot{\gm}(0)=\v\in T_{\x}\M$. The exponential map, $\expmap_{\x}(\cdot)$, transitions from a tangent space to the manifold, while the inverse exponential map, $\invexp_{\x}(\cdot)$, maps from the manifold to a tangent vector. For the aforementioned geodesic segment $\gm(t)$, $\expmap_{\x}\v=\y$ and $\invexp_{\x}\y=\v$ hold true. The Riemannian distance, $d(\x,\y)$, quantifies the geodesic distance between $\x$ and $\y$. Given the constant speed of the geodesic, we have $d(\x,\y)=\|\invexp_{\x}\y\|$.
For two distinct points $\x$ and $\y$ and a tangent vector $\u\in T_{\x}\M$, the parallel transport operation, denoted by $\G_{\x}^{\y}\u$, smoothly shifts $\u$ from being an element in $T_{\x}\M$ to being an element in $T_{\y}\M$ via the geodesic joining $\x$ and $\y$. This operation preserves both the inner product and the norm of the tangent vector.


The curvature of a Riemannian manifold is precisely defined by the Riemannian curvature tensor. For practicality, the sectional curvature is used more frequently in machine learning \citep{zhang2016first,ahn2020nesterov,kim2022nesterov}. The sectional curvature at any point $\x\in\M$ relies on all $2$-planes in $T_{\x}\M$. On Riemannian manifolds with positive, zero, and negative sectional curvatures, geodesics that start out parallel will, respectively, converge, remain parallel, and diverge. A class of Riemannian manifolds of particular interest is the Hadamard manifolds, which are complete and simply connected spaces with non-positive curvature. There, any pair of distinct points are connected by a globally length-minimizing geodesic.

A \textit{geodesically-convex} (gsc-convex) set contains all length-minimizing geodesics connecting two distinct points within the set. Let $\mathcal{N} \subseteq \mathcal{M}$ be a gsc-convex set. A function $f: \mathcal{N} \to \mathbb{R}$ is termed gsc-convex if and only if $f$ is convex when restricted to any geodesic with the minimum length connecting two distinct points in $\mathcal{N}$. Formally, for all geodesic paths $\gamma(t) \subseteq \mathcal{N}$,
\[
\textstyle f(\gamma(t)) \leq (1-t) f(\gamma(0)) + t f(\gamma(1)).
\]
For differentiable functions, geodesic convexity can be represented as:
\[
\textstyle f(\mathbf{y}) \geq f(\mathbf{x}) + \langle \nabla f(\mathbf{x}), \invexp_{\mathbf{x}} \mathbf{y} \rangle, \quad \forall \mathbf{x}, \mathbf{y} \in \mathcal{N},
\]
where $\nabla f(\mathbf{x}) \in T_{\mathbf{x}}\mathcal{M}$ is the Riemannian gradient. Consequently, the notion of geodesically-smooth (gsc-smooth) functions emerges. A function $f$ is termed $L$-gsc-smooth if 
\[
\textstyle \|\nabla f(\mathbf{x}) - \Gamma_{\mathbf{y}}^{\mathbf{x}} \nabla f(\mathbf{y})\| \leq L\cdot d(\mathbf{x}, \mathbf{y}),
\]
or equivalently, for all $\mathbf{x}, \mathbf{y} \in \mathcal{N}$:
\[
\textstyle f(\mathbf{y}) \leq f(\mathbf{x}) + \langle \nabla f(\mathbf{x}), \invexp_{\mathbf{x}} \mathbf{y} \rangle + \frac{L}{2} d(\mathbf{x}, \mathbf{y})^2.
\]
The \emph{holonomy} effect, originating from the curvature of the Riemannian manifold, captures the ``turning" of a vector as it undergoes parallel transport around a geodesic loop. As depicted in Figure \ref{fig:holo}, beginning with a tangent vector $\mathbf{u} \in T_{\mathbf{x}}\mathcal{M}$, and translating $\mathbf{u}$ along geodesic segments $\overline{\mathbf{x}\mathbf{y}}$, $\overline{\mathbf{y}\mathbf{z}}$ and $\overline{\mathbf{z}\mathbf{x}}$, upon returning to $\mathbf{x}$, the vector $\Gamma_{\mathbf{z}}^{\mathbf{x}}\Gamma_{\mathbf{y}}^{\mathbf{z}}\Gamma_{\mathbf{x}}^{\mathbf{y}}\mathbf{u}$, though still in $T_{\mathbf{x}}\mathcal{M}$, deviates in direction from the initial tangent vector $\mathbf{u}$. Although quantifying the holonomy effect for general geodesic loops is complex, this work demonstrates that approximating the holonomy effect on a geodesic triangle is sufficient to ascertain the last-iterate convergence of Riemannian extragradient type methods.

\begin{figure}[htbp]
    \centering
    \includegraphics[scale=.8]{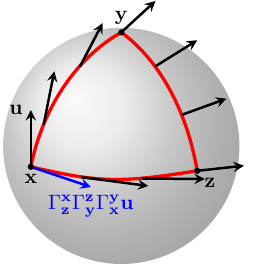}
    \caption{An illustration of the holonomy effect on a sphere.}
    \label{fig:holo}
\end{figure}
\subsection{RVIPs and Riemannian Minimax Optimization} 
Recall the definition of the Riemannian VIP is to seek a point $\mathbf{z}^*$ such that
\begin{equation}
\langle F(\mathbf{z}^*), \invexp_{\mathbf{z}^*} \mathbf{z} \rangle \geq 0, \quad \forall \mathbf{z} \in \mathcal{M},
\end{equation}
which is equivalent to finding $\mathbf{z}^*$ for which $F(\mathbf{z}^*) = 0$ by substituting $\mathbf{z} = \expmap_{\mathbf{z}^*}(-\eta F(\mathbf{z}^*))$. We demonstrate that Riemannian convex optimization and minimax optimization are special cases of Riemannian VIPs in the sequel.

A Riemannian convex optimization problem is given by
\begin{equation}
\min_{\mathbf{z} \in \mathcal{M}} f(\mathbf{z}),
\end{equation}
where $f(\mathbf{z})$ is a gsc-convex function on a Riemannian manifold $\mathcal{M}$. The corresponding RVIP is obtained by selecting $F(\mathbf{z}) = \nabla f(\mathbf{z})$. Riemannian convex optimization finds applications in operator scaling \citep{allen2018operator}, Gaussian mixture models \citep{hosseini2015matrix}, and the calculation of the Fréchet mean \citep{lou2020differentiating}.  Another additional remark is, most algorithms for convex optimization still work in general, nonconvex settings, and it is just that their quantitative convergence guarantees may not carry through. General Riemannian optimizations lead to even more applications, such as large scale eigenvalue/PCA/SVD problems \cite{tao2020variational}, generic improvement of transformer and approximation of optimal transport (Wasserstein) distance in high dimensions \cite{kong2022momentum}.

A Riemannian minimax problem can be articulated as
\begin{equation}
\min_{\mathbf{x} \in \mathcal{M}_1} \max_{\mathbf{y} \in \mathcal{M}_2} f(\mathbf{x},\mathbf{y}),
\end{equation}
where $\M_1$ and $\M_2$ are Riemannian manifolds and $f$ is gsc-convex in $\mathbf{x}$ and gsc-concave in $\mathbf{y}$. Examples in this category encompass Riemannian constrained convex optimization, robust geometry-aware PCA, and robust matrix Fréchet mean computation \citep{zhang2022minimax, jordan2022first}. Adopting $\z= \big(\begin{smallmatrix}
  \x\\
  \y
\end{smallmatrix}\big)$ and $F(\z)= \lt(\begin{smallmatrix}
  \nabla_{\x}f(\x,\y)\\
  -\nabla_{\y}f(\x,\y)
\end{smallmatrix}\rt)$,
it is evident that $\mathbf{z}^* = \lt(\begin{smallmatrix} \mathbf{x}^* \\ \mathbf{y}^* \end{smallmatrix}\rt)$, the RVIP solution, stands as a saddle point of $f(\mathbf{x}, \mathbf{y})$.

For (unconstrained) RVIP which seeks a $\mathbf{z}^*$ such that $F(\mathbf{z}^*) = 0$, the norm $\|F(\mathbf{z}_t)\|$ serves as the convergence criterion, aligning with standard proofs of last-iterate convergence in the Euclidean domain \citep{golowich2020last, gorbunov2022extragradient, cai2022finite}. However, for constrained situations, the norm $\|F(\mathbf{z}_t)\|$ is unsuitable since the RVIP solution does not inherently ensure $F(\mathbf{z}^*) = 0$. In Euclidean space, an alternative concept known as the ``tangent residual'' has been shown to be effective in constrained cases, as detailed in \citep{cai2022finite}. It would be intriguing to explore the applicability of this technique within the Riemannian context.

For Riemannian saddle point problems, a convergence measure analogous to $\|F(\mathbf{z}_t)\|$ is the Riemannian Hamiltonian:

\begin{defn}[Riemannian Hamiltonian]
For a geodesically convex-concave objective $f$ defined on $\mathcal{M}_1 \times \mathcal{M}_2$, the Riemannian Hamiltonian at $(\mathbf{x},\mathbf{y})$ is
\begin{equation*}
\text{Ham}_f(\mathbf{x},\mathbf{y}) \coloneqq \|\nabla_{\mathbf{x}} f(\mathbf{x},\mathbf{y})\|^2 + \|\nabla_{\mathbf{y}} f(\mathbf{x},\mathbf{y})\|^2 = \|F(\mathbf{z})\|^2.
\end{equation*}
\end{defn}
An alternative convergence metric introduces the Riemannian primal-dual gap:

\begin{defn}[Riemannian Primal-dual Gap]
Assume $f: \mathcal{M}_1 \times \mathcal{M}_2 \to \mathbb{R}$ is geodesically convex-concave, and sets $\mathcal{X} \subseteq \mathcal{M}_1$ and $\mathcal{Y} \subseteq \mathcal{M}_2$ are gsc-convex and compact. The primal-dual gap on $\mathcal{X} \times \mathcal{Y}$ is defined as:
\begin{equation*}
\textstyle \text{Gap}_f^{\mathcal{X} \times \mathcal{Y}}(\mathbf{x},\mathbf{y}) = \max_{\mathbf{y}' \in \mathcal{Y}} f(\mathbf{x},\mathbf{y}') - \min_{\mathbf{x}' \in \mathcal{X}} f(\mathbf{x}',\mathbf{y}).
\end{equation*}
\end{defn}
This gap measures how the utility derived by one participant changes upon unilateral action modification. It is imperative to define the primal-dual gap on compact sets $\mathcal{X} \times \mathcal{Y}$ to ensure the quantity is bounded.
\subsection{Assumptions}
In this part, we provide some key assumptions that will be essential to our later results.
\begin{ass}\label{ass:rm}
   Let $\M$ be a $d$-dimensional complete and  simply connected Riemannian manifold with sectional curvature
   lower bounded by $\kappa$ and upper bounded by $K$. Assume $F(\cdot)$ is a vector field on $\M$ and $F(\z)=0$ admits a solution $\z^*$. We denote $D$ as an upper bound of $d(\z_0,\z^*)$. When $K>0$, we require that $D\leq\frac{4\pi}{9\sqrt{K}}$.   
\end{ass}
\begin{defn}\label{def:def}
Under Assumption \ref{ass:rm}, we define $K_m=\max\{|\k|,|K|\}$ and $\D=\{\z|d(\z,\z^*)\leq\frac{6D}{5}\}$. We also define $\bs=\sigma\lt(K,\frac{91D}{81}\rt)$ and $\bzt=\zeta\lt(\kappa,\frac{7D}{5}\rt)$, where $\sigma(K,\cdot)$ and $\zeta(\k,\cdot)$ are  geometric constants defined in Lemmas \ref{lem:cos2} and \ref{lem:hessian} (Appendix \ref{app:tech}), respectively.
\end{defn}
\begin{remark}
{Note that $D<\frac{\pi}{2\sqrt{K}}$ is required to guarantee the unique geodesic property} \citep[Theorem 5.14]{cheeger1975comparison} and our condition \(D\leq \frac{4\pi}{9\sqrt{K}}\) is substantially close to \(D<\frac{\pi}{2\sqrt{K}}\).
\end{remark}
\begin{ass}\label{ass:mono2}
    $F(\cdot)$ is monotone on  $\D$:
    $$
    \la \G_{\z'}^{\z} F(\z')-F(\z),\invexp_{\z}\z'\ra\geq 0.
    $$
\end{ass}
\begin{ass}\label{ass:lips2}
    $F(\cdot)$ is $L$-Lipschitz on  the manifold $\M$, which means
    $$
    \|F(\z)-\G_{\z'}^{\z} F(\z')\|\leq L\cdot d(\z,\z').
    $$
\end{ass}
\begin{ass}\label{ass:grad}
   On the set $\D$, the norm of $F(\cdot)$ is bounded: 
    \[
    \|F(\z)\|\leq G.
    \]
\end{ass}
\begin{remark}
We utilize the fact that $F(\z^*)=0$, which leads to
\[
\|F(\z)\|=\|F(\z)-\G_{\z^*}^{\z}F(\z^*)\|\leq L\cdot d(\z,\z^*).
\]
If the algorithm's trajectory remains bounded, it directly implies an upper bound on $\|F(\z)\|$. Initiating with $\z_0$ where $d(\z_0, \z^*) \leq D$, it can be verified (see Corollaries \ref{cor:eg} and \ref{cor:peg} for details) that all iterates of both REG and RPEG are confined within
\[
\textstyle\D=\lt\{\z \mid d(\z, \z^*) \leq \frac{6D}{5}\rt\},
\]
allowing us to dispense with Assumption \ref{ass:grad} without sacrificing generality.

\end{remark}


\section{Riemannian Extragradient}
\label{sec:reg}
In this part, we discuss the last-iterate and average-iterate convergence of the Riemannian extragradient method. The details of the proof for this section are deferred to Appendix \ref{app:reg}. As a precursor, we revisit the proof in the Euclidean space.
\subsection{Warm-up: the Euclidean Case}
The \emph{extragradient} (EG) method is pivotal for tackling saddle point and variational inequality problems. Given an operator $F:\R^d\rightarrow\R^d$, the EG algorithm updates as follows at each iteration:
\begin{equation}\label{eq:eg}
    \z_{t+1}=\z_{t}-\eta F(\tz_{t})\coloneqq \z_{t}-\eta F(\z_{t}-\eta F(\z_{t})).
\end{equation}
When $F(\cdot)$ is monotone and $L$-Lipschitz, as detailed in Assumptions \ref{ass:mono} and \ref{ass:lips}, EG enjoys $O\left(\frac{1}{T}\right)$ average-iterate convergence \citep{nemirovski2004prox,mokhtari2020convergence} and $O\left(\frac{1}{\sqrt{T}}\right)$ last-iterate convergence rate \citep{golowich2020last,gorbunov2022extragradient,cai2022finite}.

\begin{ass}\label{ass:mono}
    $F(\cdot)$ is monotone, which means
    $$
    \la F(\z)-F(\z'),\z-\z'\ra\geq 0\quad\forall\z,\z'\in\mathbb{R}^d.
    $$
\end{ass}
\begin{ass}\label{ass:lips}
    $F(\cdot)$ is $L$-Lipschitz, which means
    $$
    \|F(\z)-F(\z')\|\leq L\|\z-\z'\|\quad\forall\z,\z'\in\mathbb{R}^d.
    $$
\end{ass}
 The analysis of the last-iterate convergence of EG contains two parts:
\begin{itemize}
    \item (Best-iterate convergence) There exists $t'\in [T]$:
    $$
    \textstyle\|F(\z_{t'})\|\leq O\lt(\frac{1}{\sqrt{T}}\rt).
    $$
    \item (Non-increasing operator norm) For any $t\in[T]$,
    $$
    \textstyle\|F(\z_{t+1})\|\leq \|F(\z_{t})\|.
    $$
\end{itemize}
While the best-iterate convergence of the extragradient method is well-established \citep{korpelevich1976extragradient,nemirovski2004prox}, the proof on the monotonicity of the operator norm is far from obvious. By introducing an additional assumption that the Jacobian of $F$ is $\Lambda$-Lipschitz, \citet{golowich2020last} demonstrate a marginally weaker inequality:
\[
    \textstyle\|F(\z_{t+1})\|\leq (1+\epsilon_t)\|F(\z_{t})\|.
\]
where $\epsilon_t$ is small, thus establishing the $O\left(\frac{1}{\sqrt{T}}\right)$ last-iterate convergence. Subsequent studies by \citet{gorbunov2022extragradient,cai2022finite} eliminate this additional assumption by resorting to the Performance Estimation Problem (PEP) and the Sum-of-Squares (SOS) technique. The following lemma lays the groundwork for these findings.


\begin{lemma}\citep{cai2022finite}\label{lem:egpep}
    Suppose
    \begin{equation}\label{eq:peg}
    \begin{aligned}
&0  \leq\left\langle F\left(\z_{t}\right)-F\left(\z_{t+1}\right), F\left(\tz_{t}\right)\right\rangle, \\
&\left\|F\left(\tz_{t}\right)-F\left(\z_{t+1}\right)\right\|^2  \leq L^2 \eta^2\left\|F\left(\tz_{t}\right)-F\left(\z_{t}\right)\right\|^2
\end{aligned}
\end{equation}
hold, where $L^2 \eta^2\leq 1$, then $\|F(\z_{t+1})\|\leq \|F(\z_t)\|$.
\end{lemma}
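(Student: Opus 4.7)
The plan is to reduce the inequality $\|F(\z_{t+1})\| \leq \|F(\z_t)\|$ to the two given hypotheses via a clean algebraic identity that introduces $F(\tz_t)$ as a pivot. Specifically, I would first write
\[
\|F(\z_{t+1})\|^2 = \|F(\tz_t)\|^2 - 2\langle F(\tz_t), F(\tz_t)-F(\z_{t+1})\rangle + \|F(\tz_t)-F(\z_{t+1})\|^2,
\]
and the analogous identity with $\z_{t+1}$ replaced by $\z_t$. Subtracting the two expansions makes the $\|F(\tz_t)\|^2$ terms cancel, yielding
\[
\|F(\z_{t+1})\|^2 - \|F(\z_t)\|^2 = 2\langle F(\tz_t), F(\z_{t+1})-F(\z_t)\rangle + \bigl(\|F(\tz_t)-F(\z_{t+1})\|^2 - \|F(\tz_t)-F(\z_t)\|^2\bigr).
\]

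Next, I would bound the two resulting pieces separately using the two given hypotheses. The cross-term is nonpositive directly from the first assumption $\langle F(\z_t)-F(\z_{t+1}),F(\tz_t)\rangle\geq 0$, since that is exactly the statement that $\langle F(\tz_t), F(\z_{t+1})-F(\z_t)\rangle \leq 0$. For the squared-distance term, the second assumption gives $\|F(\tz_t)-F(\z_{t+1})\|^2 \leq L^2\eta^2 \|F(\tz_t)-F(\z_t)\|^2$, and combined with $L^2\eta^2 \leq 1$ this yields $\|F(\tz_t)-F(\z_{t+1})\|^2 - \|F(\tz_t)-F(\z_t)\|^2 \leq 0$. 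Adding the two nonpositive bounds establishes $\|F(\z_{t+1})\|^2 \leq \|F(\z_t)\|^2$.

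I do not foresee a serious obstacle here — the lemma is purely an algebraic consequence of the two displayed inequalities plus $L^2\eta^2 \leq 1$, and the only ``trick'' is the choice of $F(\tz_t)$ as the common pivot in the identity, which makes both hypotheses apply cleanly in a single line. The subtle point (and the reason this lemma is powerful downstream) is that the same $F(\tz_t)$ appears simultaneously in the monotonicity-like inner product and in the Lipschitz-type squared bound, so no Cauchy–Schwarz or AM–GM inflation is needed and the constant $1$ in the conclusion is preserved exactly.
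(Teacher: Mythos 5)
Your proof is correct. You take a modestly different (and in fact cleaner) route than the paper. The paper proves this lemma in an SOS/primal style: it introduces a nonnegative multiplier $\lambda$, adds $\lambda\langle F(\z_t)-F(\z_{t+1}),F(\tz_t)\rangle\geq 0$ to the expanded Lipschitz-type inequality, and then, after choosing $\lambda\in[2L^2\eta^2,2]$ so that the two cross-term coefficients $2-\lambda$ and $\lambda-2L^2\eta^2$ are both nonnegative, kills both cross terms with Young's inequality, leaving $\tfrac{\lambda}{2}\|F(\z_t)\|^2-\tfrac{\lambda}{2}\|F(\z_{t+1})\|^2\geq 0$. Your argument is the dual-certificate version: you write the exact polarization identity
\[
\|F(\z_{t+1})\|^2-\|F(\z_t)\|^2 = 2\langle F(\tz_t),F(\z_{t+1})-F(\z_t)\rangle + \|F(\tz_t)-F(\z_{t+1})\|^2 - \|F(\tz_t)-F(\z_t)\|^2,
\]
and observe that the first summand is $\leq 0$ by the monotonicity-type inequality while the bracketed difference is $\leq 0$ by the Lipschitz-type inequality together with $L^2\eta^2\leq 1$. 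This corresponds to fixing $\lambda=2$ in the paper's parametrization, at which point the $\langle F(\tz_t),F(\z_{t+1})\rangle$ cross term cancels identically and no Young step is needed. What your version buys is a one-line proof with no free parameter and no inequality beyond the two hypotheses; what the paper's version buys is making explicit the SOS structure that motivated the discovery of the lemma in the first place.
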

In Euclidean case, Equation \eqref{eq:peg} can be derived from Equation \eqref{eq:eg}, Assumptions \ref{ass:mono} and \ref{ass:lips}.

\subsection{Riemannian Extragradient and Convergence Rates}
A natural inquiry arises: what is the last-iterate convergence of the Riemannian extragradient for monotone RVIPs? In this section, we demonstrate that the answer remains $O\lt(\frac{1}{\sqrt{T}}\rt)$.

Intuitively, we aim to identify an analog of Equation \eqref{eq:peg} in the Riemannian setting. To this end, we propose Riemannian extragradient (REG):
\begin{equation}\label{eq:reg}
        \begin{split}
        &\tz_{t}=\expmap_{\z_{t}}(-\eta F(\z_{t}))\\
        &\z_{t+1}=\expmap_{\z_{t}}(-\eta \Gamma_{\tz_{t}}^{\z_{t}}F(\tz_{t})),
    \end{split}
    \end{equation}
which employs the parallel transport to respect the Riemannian metric. In the high level, given REG's update rule, we can establish a Riemannian analog of Equation \eqref{eq:peg} and invoke Lemma \ref{lem:egpep} to show
 the operator norm $\|F(\z_t)\|$ is still non-increasing.

We first delve into the demonstration of the $O\lt(\frac{1}{\sqrt{T}}\rt)$  best-iterate convergence of REG. Since Assumptions~\ref{ass:mono2} and~\ref{ass:grad} are only valid on $\D$ rather than the entire manifold $\M$, it is crucial to ensure that all iterates of REG remain bounded. This fact is demonstrated with the aid of Lemma~\ref{lem:egbest} and Corollary \ref{cor:eg}.
Apart from establishing the boundedness of $\z_t$, Lemma~\ref{lem:egbest} also suggests an $O\left(\frac{1}{\sqrt{T}}\right)$ best-iterate convergence of REG.

\begin{lemma}\label{lem:egbest}
   Under Assumptions \ref{ass:rm},  \ref{ass:lips2} and \ref{ass:grad}. For the iterates of REG as in Equation \eqref{eq:reg},
   \[
   d(\z_t,\z^*)\leq D
   \]
   holds for any $t\geq 0$ and 
   \[
    \eta\leq\min\lt\{\frac{1}{\sqrt{8L^2+306K_mG^2}},\frac{\bs}{56\sqrt{K_m}DL+8\bzt L+\bs L} \rt\}.
    \]
    Also, we can show there exists $t'\in[T]$ such that
    \[
    \begin{split}
        \textstyle\|F(\z_{t'})\|=\mhl{O\lt(\frac{\bzt}{\sqrt{\bs^3T}}\rt)}.        
    \end{split}
    \]   
\end{lemma}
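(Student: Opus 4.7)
The plan is to mimic the standard Euclidean best-iterate analysis of EG, but evaluate distances through the Riemannian cosine comparison laws (Lemma~\ref{lem:cos2}) and keep explicit track of parallel transport so that the holonomy effect only appears through the geometric constants $\bs$ and $\bzt$. I will argue by induction on $t$, simultaneously proving the trajectory bound $d(\z_t,\z^*)\leq D$ and a one-step potential decrease inequality of the form
\begin{equation*}
d^2(\z_{t+1},\z^*)\;\leq\; d^2(\z_t,\z^*)\;-\;c_1\,\eta^2\|F(\z_t)\|^2\;+\;c_2\,\eta^2\|F(\z_t)-\G_{\tz_t}^{\z_t}F(\tz_t)\|^2,
\end{equation*}
with constants $c_1,c_2$ depending only on $\bs,\bzt$.

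First, I would apply the upper-curvature cosine law from Lemma~\ref{lem:cos2} to the geodesic triangle with vertices $\z_{t+1}$, $\z_t$, $\z^*$. Using the update $\invexp_{\z_t}\z_{t+1}=-\eta\,\G_{\tz_t}^{\z_t}F(\tz_t)$, this yields
\begin{equation*}
\bs\, d^2(\z_{t+1},\z^*)\;\leq\; d^2(\z_t,\z^*)\;-\;2\eta\la\G_{\tz_t}^{\z_t}F(\tz_t),\,\invexp_{\z_t}\z^*\ra\;+\;\eta^2\|F(\tz_t)\|^2.
\end{equation*}
I would then split the cross term as $\la\G_{\tz_t}^{\z_t}F(\tz_t),\invexp_{\z_t}\z^*\ra=\la\G_{\tz_t}^{\z_t}F(\tz_t),\invexp_{\z_t}\tz_t\ra+\la\G_{\tz_t}^{\z_t}F(\tz_t),\invexp_{\z_t}\z^*-\invexp_{\z_t}\tz_t\ra$. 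The first piece equals $-\eta\la\G_{\tz_t}^{\z_t}F(\tz_t),F(\z_t)\ra$ by the definition of $\tz_t$. For the second, I use parallel transport to move $F(\tz_t)$ to $T_{\tz_t}\M$ and then invoke monotonicity (Assumption~\ref{ass:mono2}) at the pair $(\tz_t,\z^*)$ together with $F(\z^*)=0$; the unavoidable discrepancy between $\invexp_{\z_t}\z^*-\invexp_{\z_t}\tz_t$ and $\G_{\tz_t}^{\z_t}\invexp_{\tz_t}\z^*$ is exactly the holonomy term on the triangle $(\z_t,\tz_t,\z^*)$, which I would bound via Lemma~\ref{lem:hessian} by a $\bzt$-weighted version of $d(\z_t,\tz_t)\cdot d(\tz_t,\z^*)=\eta\|F(\z_t)\|\cdot d(\tz_t,\z^*)$. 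Combining with Lipschitzness (Assumption~\ref{ass:lips2}) to replace $\|F(\tz_t)\|^2$ by $2\|F(\z_t)\|^2+2\|F(\tz_t)-\G_{\z_t}^{\tz_t}F(\z_t)\|^2\leq 2\|F(\z_t)\|^2+2L^2\eta^2\|F(\z_t)\|^2$ delivers the desired one-step inequality.

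Next, for the boundedness claim, I would set up the induction: if $d(\z_t,\z^*)\leq D$, then the extragradient half-step obeys $d(\tz_t,\z^*)\leq d(\z_t,\z^*)+\eta\|F(\z_t)\|\leq D+\eta LD$, which under the stated step-size ceiling stays inside $\{d(\cdot,\z^*)\leq 7D/5\}$ so that $\bs,\bzt$ and the Lipschitz/monotonicity hypotheses apply. Then the one-step inequality above, with $\eta$ chosen small enough that the coefficient of $\|F(\z_t)\|^2$ is non-positive, gives $\bs\,d^2(\z_{t+1},\z^*)\leq d^2(\z_t,\z^*)$; the two explicit ceilings on $\eta$ in the statement are precisely what is needed so that the resulting growth factor $1/\sqrt{\bs}$ times $D$ still lands inside the $6D/5$-ball $\D$. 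Finally, telescoping this inequality from $t=1$ to $T$ and discarding the nonnegative distance at time $T$ yields $\sum_{t=1}^T\|F(\z_t)\|^2\leq O(\bzt^2/\bs^3)\cdot D^2/\eta^2$-style bound, and picking $t'$ to be the minimizer gives the advertised $\|F(\z_{t'})\|=O(\bzt/\sqrt{\bs^3 T})$ rate.

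The main obstacle I expect is the bookkeeping around the holonomy correction in the cross term: to land on a clean inequality whose RHS is exactly $\|F(\z_t)-\G_{\tz_t}^{\z_t}F(\tz_t)\|^2$ (so that Lemma~\ref{lem:egpep} can eventually be applied for the full last-iterate result in Theorem~\ref{thm:reg}), I must be careful that every parallel transport is along the correct geodesic and that the $\bzt$-weighted bound from Lemma~\ref{lem:hessian} only multiplies terms I can absorb into either the descent coefficient or into $O(\eta^2)$ error. A secondary subtlety is verifying that $\tz_t$ itself never escapes the enlarged ball where the curvature constants $\bs$ and $\bzt$ are valid; I would handle this by choosing the numeric constants in the step-size cap so that $\eta L\leq 1/5$, which keeps $d(\tz_t,\z^*)\leq 6D/5$ throughout.
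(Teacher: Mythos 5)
Your proposal has a genuine gap, and it is precisely the point the paper is at pains to handle. You apply the cosine law at the vertex $\z_t$, decompose the cross term via the definition of $\tz_t$, invoke monotonicity at $\tz_t$, and control the remaining discrepancy with Lemma~\ref{lem:hessian} on the triangle $(\z_t,\tz_t,\z^*)$. The trouble is that the Lemma~\ref{lem:hessian} error on that triangle is of magnitude $\max\{\bzt-1,\,1-\bs\}\cdot d(\z_t,\tz_t)=\max\{\bzt-1,\,1-\bs\}\cdot\eta\|F(\z_t)\|$, which, after pairing with $2\eta\|F(\tz_t)\|$, is an $O(\eta^2)$ term whose coefficient is a curvature/diameter constant rather than a power of $\eta$. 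Likewise the cosine-law correction $(\bzt-1)\eta^2\|F(\tz_t)\|^2$ is $O(\eta^2)$ with a curvature-dependent coefficient. These sit at the same order as the descent $\eta^2\|F(\z_t)\|^2$, so they cannot be absorbed into it by shrinking $\eta$: one would instead need a geometric condition such as $(\bzt-1)+2\max\{\bzt-1,1-\bs\}<1$, which fails as soon as the curvature or $D$ is not tiny (near $D\sqrt{K}\to\pi/2$, $1-\bs\to 1$). Additionally, your intermediate display $\bs\,d^2(\z_{t+1},\z^*)\leq d^2(\z_t,\z^*)-2\eta\la\G_{\tz_t}^{\z_t}F(\tz_t),\invexp_{\z_t}\z^*\ra+\eta^2\|F(\tz_t)\|^2$ cannot be obtained from Lemma~\ref{lem:cos2}: with the inner product at $\z_t$ the comparison law yields a lower bound on $d(\z_{t+1},\z^*)^2$, and with $\sigma$ multiplying $d(\z_{t+1},\z^*)^2$ the correct form has the inner product at $\z_{t+1}$ and a minus sign on $d(\z_t,\z_{t+1})^2$; moreover $\bs$ on the left would force a factor $1/\bs>1$ per step, which compounds over $T$ iterations and destroys any telescoping bound.

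What the paper does differently, and what makes the geometric errors harmless, is to apply the cosine law at $\z_{t+1}$ (so the descent is $-\bs\,d(\z_t,\z_{t+1})^2=-\bs\eta^2\|F(\tz_t)\|^2$) and then attach \emph{all} geometric distortion to the distance $d(\tz_t,\z_{t+1})$, which Lemma~\ref{lem:eggrad1} shows is $O(\eta^2)$, not $O(\eta)$. Concretely: the cross term $2\la\invexp_{\z_{t+1}}\z_t,\invexp_{\z_{t+1}}\z^*\ra$ is split into (i) a holonomy term $\|\G_{\z_t}^{\z_{t+1}}\G_{\tz_t}^{\z_t}F(\tz_t)-\G_{\tz_t}^{\z_{t+1}}F(\tz_t)\|$ bounded by Lemma~\ref{lem:holo} on the \emph{degenerate} triangle $(\z_t,\tz_t,\z_{t+1})$, and (ii) a remainder in which Lemma~\ref{lem:hessian} is applied with edge $d(\tz_t,\z_{t+1})$; both are therefore $O(\eta^3)$ and are dominated by $-\bs\eta^2\|F(\tz_t)\|^2$ once $\eta$ is small. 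You never invoke Lemma~\ref{lem:holo} nor the $d(\tz_t,\z_{t+1})\leq 2L\eta^2\|F(\z_t)\|$ bound, which are exactly the two ingredients that prevent the curvature constants from hitting the descent term at leading order. Without them the argument does not close.
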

Therefore, under mild conditions, we can ensure that the trajectory of REG remains bounded.
\begin{cor}\label{cor:eg}
       Under Assumptions \ref{ass:rm},  \ref{ass:lips2} and \ref{ass:grad}. For the iterates of REG as in Equation \eqref{eq:reg}, $\z_t,\tz_t\in\D$
   holds for any $t\geq 0$ and 
   \[
    \textstyle\eta\leq\min\lt\{\frac{1}{\sqrt{8L^2+306K_mG^2}},\frac{\bs}{56\sqrt{K_m}DL+8\bzt L+\bs L} \rt\},
    \]
    where $\D$ is defined in Definition \ref{def:def}.
\end{cor}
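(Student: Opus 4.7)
The plan is to derive Corollary \ref{cor:eg} as a direct consequence of Lemma \ref{lem:egbest} together with a one-line triangle-inequality bound on the half-step $\tz_t$. Lemma \ref{lem:egbest} already asserts $d(\z_t, \z^*) \leq D$ under the identical step size hypothesis, and since $D \leq \frac{6D}{5}$, this immediately gives $\z_t \in \D$ for every $t \geq 0$. The only remaining work is to control $\tz_t$.

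For the half-step, the plan is to invoke the update rule $\tz_t = \expmap_{\z_t}(-\eta F(\z_t))$, which by definition of the exponential map satisfies $d(\tz_t, \z_t) = \eta \|F(\z_t)\|$. The Lipschitz assumption (Assumption \ref{ass:lips2}) combined with $F(\z^*) = 0$ and the bound from Lemma \ref{lem:egbest} yields
\[
\|F(\z_t)\| = \|F(\z_t) - \Gamma_{\z^*}^{\z_t} F(\z^*)\| \leq L \cdot d(\z_t, \z^*) \leq L D,
\]
as pointed out in the Remark following Assumption \ref{ass:grad} (alternatively, Assumption \ref{ass:grad} itself gives $\|F(\z_t)\| \leq G$ on $\D$). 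A triangle inequality then delivers
\[
d(\tz_t, \z^*) \leq d(\z_t, \z^*) + d(\tz_t, \z_t) \leq D + \eta L D = (1 + \eta L) D.
\]

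It then suffices to verify that the stated upper bound on $\eta$ forces $\eta L \leq \tfrac{1}{5}$, which would immediately yield $d(\tz_t, \z^*) \leq \frac{6D}{5}$ and hence $\tz_t \in \D$. This reduces to a numerical check on the second constraint $\eta \leq \frac{\bs}{56\sqrt{K_m} D L + 8 \bzt L + \bs L}$: after rearrangement it reads $\eta L \leq \frac{\bs}{56\sqrt{K_m} D + 8 \bzt + \bs}$, and since the geometric constants from Definition \ref{def:def} satisfy $\bzt \geq \bs$, the right-hand side is at most $\frac{\bs}{8\bzt + \bs} \leq \frac{1}{9} \leq \frac{1}{5}$.

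The main obstacle is essentially bookkeeping: confirming that the numerical constants in the step size bound are tight enough to keep the extra half-step excursion within the additional $\frac{D}{5}$ slack afforded by $\D$. No conceptual content beyond Lemma \ref{lem:egbest} and the Riemannian Lipschitz bound on $F$ is required.
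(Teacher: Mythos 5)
Your proposal is correct and matches the paper's own route: the paper proves $d(\z_t,\z^*)\leq D$ via Lemma~\ref{lem:egbest} and then, in Lemma~\ref{lem:egtraj}, uses the same triangle inequality $d(\tz_t,\z^*)\leq d(\z_t,\z^*)+\eta\|F(\z_t)\|\leq (1+\eta L)D$ together with the observation that the step-size constraint forces $\eta L\leq\tfrac{1}{9}$, giving $d(\tz_t,\z^*)\leq\tfrac{10D}{9}\leq\tfrac{6D}{5}$. Your derivation of $\eta L\leq\tfrac{1}{9}$ from $\bzt\geq 1\geq\bs$ is exactly what the paper implicitly uses when it writes $\eta\leq\frac{\bs}{8\bzt L+\bs L}\leq\frac{1}{9L}$, so there is no gap.
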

To establish the last-iterate convergence of REG, we need to show the operator norm $\|F(\z_t)\|$ does not increase. The following lemma, which quantifies the holonomy effect on a geodesic triangle, plays a crucial role in achieving that objective.

\begin{lemma}\label{lem:holo2}
    (Informal) For a small geodesic triangle $\triangle \x\y\z$ on a Riemannian manifold $\M$ and $\u\in T_{\x}\M$, we have
    \[
    \|\G_{\z}^{\x}\G_{\y}^{\z}\G_{\x}^{\y}\u-\u\|=O(1)\cdot \|\u\|\cdot d(\y,\z).
    \]
\end{lemma}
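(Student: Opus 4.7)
The plan is to realize the triangle holonomy as the time-$d(\y,\z)$ value of a smooth one-parameter family of loops obtained by sweeping over the short side $\overline{\y\z}$, differentiate this family using the standard identity relating parallel-transport deformation to the Riemann curvature tensor, and then integrate a uniform Rauch-style bound on the derivative.

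Concretely, let $\alpha:[0,d(\y,\z)]\to\M$ be the unit-speed geodesic from $\y$ to $\z$ and, for each $s$, let $\gamma_s:[0,1]\to\M$ be the minimizing geodesic from $\x$ to $\alpha(s)$, which exists and is unique because Assumption~\ref{ass:rm} keeps the whole configuration strictly inside the conjugate locus. The variation field $J_s(t)=\partial_s\gamma_s(t)$ is then a Jacobi field along $\gamma_s$ with boundary values $J_s(0)=\mathbf{0}$ and $J_s(1)=\dot\alpha(s)$ of unit length. Define $H(s)\in\mathrm{End}(T_\x\M)$ as parallel transport around the loop $\x\xrightarrow{\gamma_0}\y\xrightarrow{\alpha|_{[0,s]}}\alpha(s)\xrightarrow{\gamma_s^{-1}}\x$. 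Then $H(0)=\mathrm{id}$ (the loop traces $\gamma_0$ and immediately backtracks), while $H(d(\y,\z))=\G_\z^\x\G_\y^\z\G_\x^\y$, so the lemma reduces to bounding $\|H(d(\y,\z))\u-\u\|$.

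To estimate $\tfrac{d}{ds}(H(s)\u)$, I would track, inside $T_{\alpha(s)}\M$, the discrepancy $W(s)$ between the two ways of parallel-transporting $\u$ from $\x$ to $\alpha(s)$: along $\gamma_0$ followed by $\alpha|_{[0,s]}$, versus directly along $\gamma_s$. Isometry of parallel transport gives $\|W(s)\|=\|H(s)\u-\u\|$, and the classical variation-of-parallel-transport identity yields the covariant derivative formula
\[
\nabla_{\dot\alpha}W(s)=-\int_0^1\Pi_{s,t}\,R\bigl(\dot\gamma_s(t),J_s(t)\bigr)\,U_s(t)\,dt,
\]
where $U_s(t)$ is the parallel transport of $\u$ along $\gamma_s$ and $\Pi_{s,t}$ brings the integrand back from $\gamma_s(t)$ to $\alpha(s)$. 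Using $|\mathrm{sec}|\le K_m$, the curvature tensor satisfies $\|R(X,Y)Z\|\le 2K_m\|X\|\|Y\|\|Z\|$; since $\|\dot\gamma_s(t)\|=d(\x,\alpha(s))\le d(\x,\y)+d(\y,\z)$ and $\|U_s(t)\|=\|\u\|$, the only nontrivial factor is $\|J_s(t)\|$, which Rauch's comparison theorem controls uniformly in $t$ by a constant depending only on $K_m$ and $d(\x,\alpha(s))$. Integrating $\|\nabla_{\dot\alpha}W\|\le C(K_m,D)\|\u\|$ from $0$ to $d(\y,\z)$ then yields the claimed bound, with the hidden $O(1)$ absorbing $K_m$ together with the side lengths $d(\x,\y)$ and $d(\x,\z)$, both of which are controlled by the diameter $D$ of Assumption~\ref{ass:rm}.

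The main obstacle is keeping the Rauch bound on $\|J_s\|$ finite in the positive-curvature regime: near the conjugate distance $\pi/\sqrt{K}$ the comparison constant blows up, so one needs strict separation from it. The condition $D\le\tfrac{4\pi}{9\sqrt{K}}$ in Assumption~\ref{ass:rm} is exactly this separation, and the geometric constants $\bzt,\bs$ from Definition~\ref{def:def} are the natural quantifications that will show up in the final constant. A secondary nuisance is careful bookkeeping of base points so that the curvature tensor is evaluated at $\gamma_s(t)$ while $W$ lives at $\alpha(s)$; this is why the auxiliary parallel transports $\Pi_{s,t}$ must appear explicitly in the integrand.
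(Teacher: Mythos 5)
Your proposal is correct and follows the same strategy as the paper's proof of the formal version (Lemma~\ref{lem:holo}): both realize the triangle holonomy as the parallel transport around the boundary of a geodesic cone over $\x$, express the defect as an integral of the Riemann curvature tensor over that spanning surface, and control the integrand via a Rauch/Jacobi comparison that keeps $\|J_s(t)\|$ uniformly bounded when the triangle sits strictly inside the conjugate locus. The paper packages the curvature-integration identity as a cited result for a degenerate quadrilateral (Lemma~\ref{lem:gauss} with $\w=\x$, applied to the rectangle map $\Lambda(s,t)$), whereas you rederive the same identity directly from the variation-of-parallel-transport formula; this is a presentational rather than a substantive difference. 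One small correction: the constants that absorb into your $O(1)$ come from the Jacobi-ratio bound $\bm{s}(\kappa,\cdot)/\bm{S}(K,\cdot)\leq 3$ of Lemma~\ref{lem:jacobi1}, not from $\bzt,\bs$ (which are the cosine-law constants used in the convergence analysis, not in the holonomy estimate).
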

\begin{remark}
It is beneficial to recognize a more potent implication of Lemma \ref{lem:holo2}: 
\[
    \|\G_{\z}^{\x}\G_{\y}^{\z}\G_{\x}^{\y}\u-\u\|=O(1)\cdot\min\{d(\x,\y),d(\x,\z),d(\y,\z)\}
    \]
holds for a small geodesic triangle, given that parallel transport preserves the norm of a tangent vector.
\end{remark}
A formal description and proof of Lemma \ref{lem:holo2} can be found in Appendix \ref{app:holo}. For REG, with a small step-size $\eta$, we demonstrate in the subsequent lemma that the distortion due to the holonomy effect is small and the operator norm does not increase.
\begin{lemma}\label{lem:decreasenorm}
   Under Assumptions \ref{ass:rm}, \ref{ass:mono2}, \ref{ass:lips2}, \ref{ass:grad}. For the iterates of REG as in Equation \eqref{eq:reg},
    we can show
    \[
    \begin{split}
        \|F(\z_{t+1})\|\leq\|F(\z_{t})\|,
    \end{split}
    \]
    by choosing
    \[
    \textstyle\eta\leq\min\lt\{\frac{1}{\sqrt{8L^2+306K_mG^2}},\frac{\bs}{56\sqrt{K_m}DL+8\bzt L+\bs L} \rt\}.
    \]
\end{lemma}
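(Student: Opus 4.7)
The plan is to invoke Lemma \ref{lem:egpep} after parallel transporting every gradient to the common tangent space $T_{\z_t}\M$. Write $\g_t := F(\z_t)$, $\tilde{\g}_t := \G_{\tz_t}^{\z_t} F(\tz_t)$, and $\g_{t+1} := \G_{\z_{t+1}}^{\z_t} F(\z_{t+1})$, all vectors in $T_{\z_t}\M$. Because parallel transport is an isometry, $\|\g_{t+1}\|=\|F(\z_{t+1})\|$ and $\|\g_t\|=\|F(\z_t)\|$, so once the two Euclidean-style hypotheses of Lemma \ref{lem:egpep} are verified for the triple $(\g_t,\tilde{\g}_t,\g_{t+1})$ in $T_{\z_t}\M$, the conclusion transfers back to the manifold. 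Corollary \ref{cor:eg} guarantees $\z_t,\tz_t,\z_{t+1}\in\D$ throughout, so Assumptions \ref{ass:mono2}--\ref{ass:grad} all apply.

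Verifying the first hypothesis $\langle \g_t-\g_{t+1},\tilde{\g}_t\rangle\ge 0$ is essentially free of curvature corrections. The REG update gives $\z_{t+1}=\expmap_{\z_t}(-\eta\tilde{\g}_t)$, so $\invexp_{\z_t}\z_{t+1}=-\eta\tilde{\g}_t$, and hence $\langle\g_t-\g_{t+1},\tilde{\g}_t\rangle=\tfrac{1}{\eta}\langle\g_{t+1}-\g_t,\invexp_{\z_t}\z_{t+1}\rangle$. Applying Assumption \ref{ass:mono2} with $\z=\z_t$ and $\z'=\z_{t+1}$ yields exactly $\langle \G_{\z_{t+1}}^{\z_t}F(\z_{t+1})-F(\z_t),\invexp_{\z_t}\z_{t+1}\rangle\ge 0$, which is the desired inequality without any residual terms.

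The second hypothesis $\|\tilde{\g}_t-\g_{t+1}\|^2\le L^2\eta^2\|\tilde{\g}_t-\g_t\|^2$ is where the holonomy effect has to be handled carefully. For the right-hand side, the REG update gives $\eta(\tilde{\g}_t-\g_t)=-(\invexp_{\z_t}\z_{t+1}-\invexp_{\z_t}\tz_t)$, and the chord length in $T_{\z_t}\M$ is comparable to the geodesic distance $d(\tz_t,\z_{t+1})$ via the curvature-dependent cosine law (Lemma \ref{lem:cos2}) through the constant $\bs$. For the left-hand side, we use the splitting
\[
\tilde{\g}_t-\g_{t+1}=\G_{\tz_t}^{\z_t}\lt(F(\tz_t)-\G_{\z_{t+1}}^{\tz_t}F(\z_{t+1})\rt)+\lt(\G_{\tz_t}^{\z_t}\G_{\z_{t+1}}^{\tz_t}-\G_{\z_{t+1}}^{\z_t}\rt)F(\z_{t+1}),
\]
so that the first summand has norm at most $L\cdot d(\tz_t,\z_{t+1})$ by Assumption \ref{ass:lips2}, while the second summand measures the holonomy of parallel transport around the geodesic triangle $\triangle\z_t\tz_t\z_{t+1}$ applied to $F(\z_{t+1})$. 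Lemma \ref{lem:holo2} bounds it by $O(1)\sqrt{K_m}\,\|F(\z_{t+1})\|\,\min\{d(\z_t,\tz_t),d(\z_t,\z_{t+1}),d(\tz_t,\z_{t+1})\}$, and Assumption \ref{ass:grad} lets us replace $\|F(\z_{t+1})\|$ by $G$.

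The main obstacle is combining all of these curvature-dependent pieces so that the total bound fits inside $L^2\eta^2\|\tilde{\g}_t-\g_t\|^2$ under a step-size restriction matching the stated one. The Lipschitz term and the holonomy term both scale with $d(\tz_t,\z_{t+1})$ up to factors involving $L$, $\sqrt{K_m}\,G$, $\bs$, and $\bzt$, while $\eta\|\tilde{\g}_t-\g_t\|$ is comparable to $d(\tz_t,\z_{t+1})$ up to the same geometric constants via Lemma \ref{lem:cos2}. Carefully collecting these contributions leads precisely to the step-size requirement $\eta\leq\min\{1/\sqrt{8L^2+306K_m G^2},\,\bs/(56\sqrt{K_m}DL+8\bzt L+\bs L)\}$; under this restriction both hypotheses of Lemma \ref{lem:egpep} hold, so its conclusion $\|\g_{t+1}\|\le\|\g_t\|$, equivalently $\|F(\z_{t+1})\|\le\|F(\z_t)\|$, follows.
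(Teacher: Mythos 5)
Your proposal takes essentially the same route as the paper's proof: reduce to Lemma \ref{lem:egpep} by parallel-transporting $F(\tz_t)$ and $F(\z_{t+1})$ into $T_{\z_t}\M$, verify the monotonicity hypothesis directly from Assumption \ref{ass:mono2}, and verify the second hypothesis by splitting $\tilde{\g}_t-\g_{t+1}$ into a Lipschitz piece and a holonomy piece bounded via Lemma \ref{lem:holo2} (formalized as Lemma \ref{lem:holo}). One small inaccuracy: the conversion between $d(\tz_t,\z_{t+1})$ and $\eta\|\tilde{\g}_t-\g_t\|$ is done in the paper via Lemma \ref{lem:trig} (a simple factor of $2$ under the side-length restriction), not via the cosine law of Lemma \ref{lem:cos2} through $\bs$; consequently $\bs$ and $\bzt$ do not appear in the verified bound $\|\tilde{\g}_t-\g_{t+1}\|^2\le(8L^2+306K_mG^2)\eta^2\|\tilde{\g}_t-\g_t\|^2$ at all — they enter this lemma only through the step-size restriction required for Corollary \ref{cor:eg} to guarantee the iterates remain in $\D$, where Assumptions \ref{ass:mono2} and \ref{ass:grad} apply.
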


Now we are ready to present the last-iterate convergence of REG.
\begin{thm}\label{thm:eglast}
    Under Assumptions \ref{ass:rm}, \ref{ass:mono2}, \ref{ass:lips2}, \ref{ass:grad}. For the iterates of REG as in Equation \eqref{eq:reg},
    we can choose
    \[
  \textstyle  \eta=\min\lt\{\frac{1}{\sqrt{8L^2+306K_mG^2}},\frac{\bs}{56\sqrt{K_m}DL+8\bzt L+\bs L} \rt\}
    \]    
to achieve $O\lt(\frac{1}{\sqrt{T}}\rt)$ last-iterate convergence for the monotone variational problem with an operator $F$.    
\end{thm}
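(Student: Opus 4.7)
The plan is to obtain Theorem~\ref{thm:eglast} as a clean composition of the ingredients already assembled: Corollary~\ref{cor:eg} (iterates stay in $\D$), Lemma~\ref{lem:egbest} (best-iterate convergence of order $\bzt/\sqrt{\bs^3 T}$), and Lemma~\ref{lem:decreasenorm} (monotonicity of the operator norm $\|F(\z_t)\|$). The prescribed step-size $\eta=\min\lt\{\frac{1}{\sqrt{8L^2+306K_mG^2}},\frac{\bs}{56\sqrt{K_m}DL+8\bzt L+\bs L}\rt\}$ is precisely the intersection of the step-size conditions required by each of these three supporting results, so choosing $\eta$ at this value simultaneously activates all of them.

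First I would invoke Corollary~\ref{cor:eg} to certify that both $\z_t$ and $\tz_t$ lie in $\D$ for every $t\geq 0$. This is essential because Assumptions~\ref{ass:mono2} and~\ref{ass:grad} are only postulated on $\D$, and the curvature-dependent constants $\bs,\bzt$ used throughout the analysis are calibrated to this set (Definition~\ref{def:def}). With the trajectory confined to $\D$, the monotonicity of $F$, the gradient bound $\|F(\cdot)\|\leq G$, and the holonomy estimate underlying Lemma~\ref{lem:holo2} can all be applied uniformly in $t$.

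Next I would apply Lemma~\ref{lem:egbest} to extract an index $t'\in[T]$ satisfying $\|F(\z_{t'})\|=O\lt(\frac{\bzt}{\sqrt{\bs^3 T}}\rt)$. Finally, invoking Lemma~\ref{lem:decreasenorm} iteratively for $t=t',t'+1,\dots,T-1$ yields the telescoping chain $\|F(\z_T)\|\leq\|F(\z_{T-1})\|\leq\cdots\leq\|F(\z_{t'})\|$, which promotes the best-iterate bound to the last-iterate bound $\|F(\z_T)\|=O\lt(\frac{\bzt}{\sqrt{\bs^3 T}}\rt)=O\lt(\frac{1}{\sqrt{T}}\rt)$, where the geometric constants $\bzt,\bs$ are absorbed into the big-$O$ since they are fixed by the manifold and the initial distance $D$.

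The genuinely hard steps are not in this final proof but were discharged upstream: Lemma~\ref{lem:decreasenorm} is where the Riemannian obstruction truly bites, since in the Euclidean argument of Lemma~\ref{lem:egpep} the identity $\|F(\z_t)\|^2-\|F(\z_{t+1})\|^2\geq 0$ is obtained by an algebraic manipulation that silently assumes all vectors live in the same vector space. In the Riemannian setting, the corresponding inner products live in different tangent spaces and must be compared via parallel transport around a geodesic triangle, introducing a holonomy distortion that Lemma~\ref{lem:holo2} bounds by $O(\sqrt{K_m})\cdot d(\cdot,\cdot)\cdot\|\cdot\|$. Once this distortion is controlled by the smallness of $\eta$, the Euclidean PEP/SOS argument transfers, and the present theorem becomes a one-line consequence; accordingly, I would keep the written proof short and simply cite the three supporting results.
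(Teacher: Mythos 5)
Your proposal is correct and follows essentially the same route as the paper, which proves Theorem~\ref{thm:eglast} by directly combining Lemma~\ref{lem:egbest} (best-iterate bound $\|F(\z_{t'})\|=O(\bzt/\sqrt{\bs^3 T})$ at some $t'\in[T]$) with Lemma~\ref{lem:decreasenorm} (monotone decrease of $\|F(\z_t)\|$). Your additional explicit invocation of Corollary~\ref{cor:eg} to confine the trajectory to $\D$, and your closing remarks locating the real difficulty in the holonomy estimate behind Lemma~\ref{lem:decreasenorm}, are accurate and consistent with the paper's exposition.
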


\begin{remark}
It is instructive to compare the step-size from Theorem \ref{thm:eglast} against that of \citet{gorbunov2022extragradient}. When we consider the Riemannian manifold as an Euclidean space, our required step-size is $\eta\leq\frac{1}{8L}$, whereas \citet{gorbunov2022extragradient} prescribes $\eta\leq\frac{1}{\sqrt{2}L}$. This disparity hints that the constants in our findings may not be the tightest.
\end{remark}
When we consider Riemannian saddle point problems, Theorem \ref{thm:reg} shows REG attains $O\lt(\frac{1}{\sqrt{T}}\rt)$ last-iterate convergence and $O\lt(\frac{1}{T}\rt)$ average-iterate convergence, simultaneously.
\begin{thm}\label{thm:reg}
    Consider a Riemannian minimax optimization problem 
    \[
    \min_{\x\in\M_1}\max_{\y\in\M_2}f(\x,\y)
    \]
    where $f$ is geodesically convex-concave. Let $\M\coloneqq\M_1\times\M_2$, $\z^*\coloneqq \big(\begin{smallmatrix}
  \x^*\\
  \y^*
\end{smallmatrix}\big)$ to be the saddle point, and $\X=\B\lt(\x^*,\frac{\sqrt{2}D}{2}\rt)$, $\Y=\B\lt(\y^*,\frac{\sqrt{2}D}{2}\rt)$ to be geodesic balls.  Under Assumptions \ref{ass:rm}, \ref{ass:mono2}, \ref{ass:lips2}, \ref{ass:grad}, if we apply REG in Equation \eqref{eq:reg} with
\[
    \textstyle\eta=\min\lt\{\frac{1}{\sqrt{8L^2+306K_mG^2}},\frac{\bs}{56\sqrt{K_m}DL+8\bzt L+\bs L} \rt\},
    \]
 then
\[
\textstyle\max_{\y\in\Y}f(\x_T,\y)-\min_{\x\in\X}f(\x,\y_T)=\mhl{O\lt(\frac{\bzt}{\sqrt{\bs^3T}}\rt)},
\]
and
\[
\textstyle\max_{\y\in\Y}f(\bx_T,\y)-\min_{\x\in\X}f(\x,\by_T)= \mhl{O\lt(\frac{\bzt}{\bs T}\rt)},
\]
where $\bx_T=\expmap_{\bx_{T-1}}\lt(\frac{1}{T}\invexp_{\bx_{T-1}}\tx_T\rt)$ and $\by_T=\expmap_{\by_{T-1}}\lt(\frac{1}{T}\invexp_{\by_{T-1}}\ty_T\rt)$ are the geodesic ergodic averages of $\tx_t$ and $\ty_t$ for $t=1,\dots, T$.
\end{thm}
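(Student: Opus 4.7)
The plan is to decouple the two claims. For the last-iterate gap, I would lean on Theorem~\ref{thm:eglast} together with the gsc-convex-concave structure; for the average-iterate gap, I would run a per-iteration Riemannian cosine-law telescoping argument. Corollary~\ref{cor:eg} places $\z_T$ in $\D$ and Theorem~\ref{thm:eglast} yields $\|F(\z_T)\|=O(\bzt/\sqrt{\bs^3 T})$, which bounds both $\|\nabla_\x f(\x_T,\y_T)\|$ and $\|\nabla_\y f(\x_T,\y_T)\|$ under the product structure $F(\z)=(\nabla_\x f,-\nabla_\y f)$. Gsc-concavity of $f(\x_T,\cdot)$ then gives
\[
f(\x_T,\y) - f(\x_T,\y_T) \le \la \nabla_\y f(\x_T,\y_T), \invexp_{\y_T}\y\ra \le \|\nabla_\y f(\x_T,\y_T)\|\cdot d(\y_T,\y),
\]
and symmetrically for the $\x$ direction. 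Since $d(\y_T,\y)\le d(\y_T,\y^*)+d(\y^*,\y)\le \tfrac{6D}{5}+\tfrac{\sqrt 2 D}{2}=O(D)$, and likewise for $\x$, taking $\max_{\y\in\Y}$ and $\min_{\x\in\X}$ yields the claimed $O(\bzt/\sqrt{\bs^3 T})$ last-iterate gap.

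For the average-iterate bound, fix a comparator $\z=(\x,\y)\in\X\times\Y$ and apply the Riemannian cosine law (Lemma~\ref{lem:cos2} in the appendix) to the triangle $\z_t,\z_{t+1},\z$:
\[
d(\z_{t+1},\z)^2 \le d(\z_t,\z)^2 - 2\la \invexp_{\z_t}\z_{t+1}, \invexp_{\z_t}\z\ra + \bzt\, d(\z_t,\z_{t+1})^2.
\]
Substituting the REG update $\invexp_{\z_t}\z_{t+1}=-\eta\,\Gamma_{\tz_t}^{\z_t}F(\tz_t)$ and using the isometry of parallel transport rewrites the cross term as $\eta\,\la F(\tz_t), \Gamma_{\z_t}^{\tz_t}\invexp_{\z_t}\z\ra$. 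Replacing $\Gamma_{\z_t}^{\tz_t}\invexp_{\z_t}\z$ by $\invexp_{\tz_t}\z$ introduces a geodesic-triangle distortion, which I would absorb via Assumption~\ref{ass:lips2} and the extragradient bound $\|F(\z_t)-\Gamma_{\tz_t}^{\z_t}F(\tz_t)\|\le L\eta\|F(\z_t)\|$ already exploited in Lemma~\ref{lem:decreasenorm}. Telescoping over $t$ gives
\[
\textstyle 2\eta \sum_{t=1}^{T}\la F(\tz_t),\invexp_{\tz_t}\z\ra \le d(\z_0,\z)^2 + (\text{absorbed curvature error}),
\]
and gsc-convex-concavity converts the left-hand side into $\sum_{t}\lt(f(\tx_t,\y) - f(\x,\ty_t)\rt)$. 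Finally, applying gsc-convexity/concavity along the geodesic ergodic averages $\bx_T,\by_T$ (analogous to the RCEG analysis of \citet{zhang2022minimax}) produces the $O(\bzt/(\bs T))$ rate after maximizing/minimizing over $\y\in\Y$ and $\x\in\X$.

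The main obstacle is controlling the cumulative curvature-induced error in the telescoping step: each invocation of the cosine law contributes a $\bzt$ factor and each parallel-transport swap produces a holonomy distortion on the $\z_t\tz_t\z$ triangle. These must be absorbed by the same step-size restriction as in Theorem~\ref{thm:eglast}, which is feasible precisely because Corollary~\ref{cor:eg} confines the trajectory to $\D$, so Assumption~\ref{ass:grad} and the quantitative form of Lemma~\ref{lem:holo2} apply uniformly along the trajectory. Confirming that the comparator balls $\X,\Y$ of radius $\tfrac{\sqrt 2 D}{2}$ lie within the unique-geodesic region is immediate from Assumption~\ref{ass:rm}, so the last-iterate conversion from $\|F\|$ to primal-dual gap is routine once the average-iterate calculation is in place.
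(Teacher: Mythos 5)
Your last-iterate argument is essentially the paper's: combine Theorem~\ref{thm:eglast} with the first-order gsc-convex/concave inequality written for the product operator $F$, and use $d(\z_T,\z)=O(D)$ for $\z\in\X\times\Y$. That part is fine. The average-iterate argument, however, has a structural gap and would not go through as sketched.

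The paper's Lemma~\ref{lem:egprimal} applies the Riemannian cosine law \emph{at the vertex $\z_{t+1}$}, using Lemma~\ref{lem:cos2} (the $\sigma\le 1$ version). This produces the \emph{negative} term $-\bs\,d(\z_t,\z_{t+1})^2=-\bs\eta^2\|F(\tz_t)\|^2$, which is precisely what absorbs the curvature-induced errors. Moreover, with this choice the parallel-transport swap is done across the pair $\tz_t\leftrightarrow\z_{t+1}$, whose distance is $O(\eta^2)$ by Lemma~\ref{lem:eggrad1}; consequently both the Lemma~\ref{lem:hessian} distortion and the Lemma~\ref{lem:holo} holonomy term are $O(\eta^3)$ after multiplication by $2\eta$, i.e.\ strictly smaller than the $-\bs\eta^2\|F(\tz_t)\|^2$ cushion, and a step-size of order $\bs/\bzt$ suffices.

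You instead write the cosine law at the vertex $\z_t$ and obtain a \emph{positive} $+\bzt\,d(\z_t,\z_{t+1})^2$ term (this is Lemma~\ref{lem:cos1}, not Lemma~\ref{lem:cos2} as cited). There is then no $-\bs\eta^2\|F\|^2$ cushion at all. In addition, your swap $\Gamma_{\z_t}^{\tz_t}\invexp_{\z_t}\z\to\invexp_{\tz_t}\z$ is across a pair with $d(\z_t,\tz_t)=\eta\|F(\z_t)\|=O(\eta)$, so the Lemma~\ref{lem:hessian} distortion is $O(\eta)$ and the resulting error after multiplying by $2\eta\|F(\tz_t)\|$ is $O(\bzt\eta^2\|F\|^2)$ --- the same order as the terms you would like to keep, not a lower order. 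This distortion is a comparison-geometry effect governed by Lemma~\ref{lem:hessian}, not a Lipschitz effect, so invoking Assumption~\ref{ass:lips2} and the bound $\|F(\z_t)-\Gamma_{\tz_t}^{\z_t}F(\tz_t)\|\le L\eta\|F(\z_t)\|$ does not control it. Finally, replacing $\Gamma_{\z_t}^{\tz_t}\invexp_{\z_t}\z$ by $\invexp_{\tz_t}\z$ silently drops the offset $\invexp_{\tz_t}\z_t=\eta\Gamma_{\z_t}^{\tz_t}F(\z_t)$ that appears in Lemma~\ref{lem:hessian}; that offset is exactly what produces the extragradient cross term $-2\eta^2\la F(\tz_t),\Gamma_{\z_t}^{\tz_t}F(\z_t)\ra$, and without it even the Euclidean ($\bzt=\bs=1$) argument fails. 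Even restoring the offset, one is left with a positive residual of order $(\bzt-1)\eta^2\|F\|^2$ plus $O(\eta^2)$ distortion terms and only a $-\eta^2\|F\|^2$ negative from the cross-term cancellation; these cannot be absorbed for general curvature $\bzt$. The fix is to do what Lemma~\ref{lem:egprimal} does: apply Lemma~\ref{lem:cos2} at $\z_{t+1}$ to create the $-\bs$ cushion and perform the parallel-transport bookkeeping on the $O(\eta^2)$-close pair $\tz_t\leftrightarrow\z_{t+1}$.
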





Up to now, a natural question that might arise is whether the technique regarding holonomy distortion suffices to establish the last-iterate convergence for RCEG \citep{zhang2022minimax} or ROGDA \citep{wang2023riemannian}. We explored the possibility of establishing the last-iterate convergence of RCEG Appendix \ref{app:rceg}. The issue appears to involve a new distortion term and we use an alternative update $\z_{t+1}=\expmap_{\z_{t}}(-\eta \Gamma_{\tz_{t}}^{\z_{t}}F(\tz_{t}))$ in REG to eliminate it. For ROGDA, note that the proof in \citet{wang2023riemannian} heavily  relies on the holonomy effect on a geodesic quadrilateral:
 \[
 \begin{split}
    &\textstyle\|\G_{\w}^{\x}\G_{\z}^{\w}\G_{\y}^{\z}\G_{\x}^{\y}\u-\u\| \\
    =&O(1)\cdot(d(\x,\y)+d(\y,\z)+d(\w,\x))\cdot (d(\y,\z)+d(\w,\x)) 
 \end{split}
\]
 In our Lemma \ref{lem:holo}, we actually consider the case of $\w=\x$, so the quadrilateral degenerates to be a geodesic triangle. This aspect is crucial in the proof of our Lemma \ref{lem:decreasenorm}, where we aim to bound the holonomy effect using a specific geodesic edge 
$d(\z_{t+1},\tz_t)$, rather than summing two edges, mirroring the approach in Euclidean cases. Given these considerations,  demonstrating the last-iterate convergence of RCEG and ROGDA presents non-trivial challenges.

\section{Riemannian Past Extragradient}
\label{sec:rpeg}
In the Euclidean space, the Past Extragradient (PEG) method, introduced by \citet{popov1980modification}, performs iterates as in Equation \eqref{eq:epeg}. One of the advantages of PEG over EG is the halving of gradient queries in each iteration. The two inequalities presented in \citet[Lemma 3.1]{gorbunov2022last} are crucial for demonstrating the last-iterate convergence of PEG:
\begin{equation}\label{eq:pegeq0}
    \begin{aligned}
&0  \leq\left\langle F\left(\z_{t}\right)-F\left(\z_{t+1}\right), F\left(\tz_{t}\right)\right\rangle, \\
&\left\|F\left(\tz_{t}\right)-F\left(\z_{t+1}\right)\right\|^2  \leq L^2 \eta^2\left\|F\left(\tz_{t}\right)-F\left(\tz_{t-1}\right)\right\|^2 .
\end{aligned}
\end{equation}
And they can be deduced from Equation \eqref{eq:epeg}, Assumptions \ref{ass:mono} and \ref{ass:lips}.

 With insights gained from the Euclidean space, we introduce the Riemannian Past Extragradient (RPEG):
\begin{equation}\label{eq:rpeg}
    \begin{split}       
    &\tz_{t}=\expmap_{\z_{t}}\lt(-\eta\G_{\tz_{t-1}}^{\z_{t}}F(\tz_{t-1})\rt)\\
    &\z_{t+1}=\expmap_{\z_{t}}\lt(-\eta\G_{\tz_{t}}^{\z_{t}}F(\tz_{t})\rt)
    \end{split}
\end{equation}
One can readily observe that RPEG is distinct from ROGDA \citep{wang2023riemannian}:
\[
\textstyle\tz_{t+1}=\expmap_{\tz_t}\lt(-2\eta F(\tz_t)+\eta\G_{\tz_{t-1}}^{\tz_t}F(\tz_{t-1})\rt),
\]
even in the unconstrained scenario, owing to the non-linear nature of the exponential map. This distinction becomes particularly noteworthy when contrasted with the Euclidean setting.

    

Following \citet{gorbunov2022last}, we use a Lyapunov analysis argument to show the last-iterate convergence of RPEG.  Proof details of this part are deferred to Appendix \ref{app:rpeg} due to page limitations. We first establish the following lemma, which implies $d(\z_t,\z^*)\leq D$ holds for any $t\geq 0$.
\begin{lemma}\label{lem:peg}
   Under Assumptions \ref{ass:rm}, \ref{ass:lips2} and \ref{ass:grad}. For the iterates of RPEG in Equation \eqref{eq:rpeg} with 
    \[
   \textstyle \eta\leq\min\lt\{\frac{\bs}{141LD\sqrt{K_m}+32\bzt L}, \frac{1}{\sqrt{648K_mG^2}}\rt\},
    \]
    $d(\z_t,\z^*) \leq D$ holds for any $t\geq 0$.
\end{lemma}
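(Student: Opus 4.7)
The plan is to prove the bound $d(\z_t,\z^*)\leq D$ by strong induction on $t$, with the base case $t=0$ immediate from $d(\z_0,\z^*)\leq D$. For the inductive step, suppose $d(\z_s,\z^*)\leq D$ holds for all $s\leq t$; combined with $d(\tz_s,\z_s) = \eta\|F(\tz_{s-1})\|\leq \eta G$ under the induction hypothesis, this keeps all intermediate points inside $\D$, so Assumptions \ref{ass:lips2} and \ref{ass:grad} apply along the entire past trajectory. In particular, $\|F(\tz_s)\|\leq L\,d(\tz_s,\z^*)$ can be freely invoked.

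The central object should be a Lyapunov function of the form
\[
\Phi_t \;=\; d(\z_t,\z^*)^2 \;+\; c\,\eta^2\,\|F(\tz_{t-1})\|^2
\]
for a positive constant $c$ to be chosen in terms of the geometry. If I can show $\Phi_{t+1}\leq \Phi_t$ under the step-size condition, then since $\Phi_0 \leq D^2 + c\eta^2 G^2 \leq D^2\cdot(1+o(1))$ for the prescribed $\eta$, telescoping yields $d(\z_{t+1},\z^*)^2\leq \Phi_{t+1}\leq \Phi_0\leq D^2$, closing the induction. To establish the one-step decrease, I would apply the curvature-corrected Riemannian cosine law (Lemma \ref{lem:cos2}) to the geodesic triangle $\z_t\z_{t+1}\z^*$ to obtain
\[
d(\z_{t+1},\z^*)^2 \leq d(\z_t,\z^*)^2 + \bzt\eta^2\|F(\tz_t)\|^2 + 2\eta\,\la \G_{\tz_t}^{\z_t}F(\tz_t),\,\invexp_{\z_t}\z^*\ra .
\]

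The delicate part is the inner product, which I would decompose as
\[
\la\G_{\tz_t}^{\z_t}F(\tz_t),\invexp_{\z_t}\z^*\ra = \la\G_{\tz_{t-1}}^{\z_t}F(\tz_{t-1}),\invexp_{\z_t}\z^*\ra + \la\G_{\tz_t}^{\z_t}F(\tz_t) - \G_{\tz_{t-1}}^{\z_t}F(\tz_{t-1}),\invexp_{\z_t}\z^*\ra .
\]
The first piece, by the RPEG identity $\invexp_{\z_t}\tz_t = -\eta\,\G_{\tz_{t-1}}^{\z_t}F(\tz_{t-1})$, equals $-\tfrac{1}{\eta}\la\invexp_{\z_t}\tz_t,\invexp_{\z_t}\z^*\ra$, which a second application of the cosine law converts into a combination of $d(\tz_t,\z^*)^2$, $d(\z_t,\z^*)^2$ and $d(\tz_t,\z_t)^2 = \eta^2\|F(\tz_{t-1})\|^2$, enabling the remaining gradient-norm contribution to be charged against the $\|F(\tz_{t-1})\|^2$ term sitting in $\Phi_t$. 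The second piece is a parallel-transport difference, bounded via Lipschitzness $\|F(\tz_t) - \G_{\tz_{t-1}}^{\tz_t}F(\tz_{t-1})\| \leq L\,d(\tz_t,\tz_{t-1})$ together with the holonomy correction from Lemma \ref{lem:holo2} applied to the triangle $\z_t\tz_{t-1}\tz_t$, which contributes an additional $O(\sqrt{K_m}\,\|F(\tz_{t-1})\|\cdot d(\tz_t,\tz_{t-1}))$ term that is again absorbed into the Lyapunov correction.

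The main obstacle will be the bookkeeping: the geometric constants $\bs$, $\bzt$, $\sqrt{K_m}$ and $L$ enter through three independent channels---the cosine law, the Lipschitzness of $F$, and the holonomy of parallel transport---and all three must be balanced against the coefficient $c$ of the Lyapunov correction so that the resulting inequality is non-positive. The step-size thresholds $\eta \leq \bs/(141LD\sqrt{K_m} + 32\bzt L)$ and $\eta \leq 1/\sqrt{648K_mG^2}$ should drop out of precisely this balancing, the first controlling the linear-in-$\eta$ curvature/Lipschitz terms (the factor $141$ and the ratio $\bzt/\bs$ hint at multiple triangle applications being chained together) and the second controlling the quadratic-in-$\eta$ curvature terms that arise from the uniform bound $\|F(\tz_t)\|^2\leq G^2$ on $\D$.
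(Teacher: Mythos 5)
Your plan takes a genuinely different route from the paper, but it has two gaps that I do not see how to close as stated.

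\textbf{The Lyapunov initial condition does not deliver the claimed bound.} You set $\Phi_t = d(\z_t,\z^*)^2 + c\eta^2\|F(\tz_{t-1})\|^2$ and argue that $\Phi_{t+1}\leq\Phi_t$ gives $d(\z_{t+1},\z^*)^2\leq\Phi_0\leq D^2$. But $\Phi_0 = d(\z_0,\z^*)^2 + c\eta^2\|F(\tz_{-1})\|^2 \leq D^2 + c\eta^2 G^2 > D^2$ whenever $F(\tz_{-1})\neq 0$. So even if the telescoping works, you only conclude $d(\z_t,\z^*)^2\leq D^2 + c\eta^2 G^2$, which is strictly weaker than the lemma's claim $d(\z_t,\z^*)\leq D$. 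You would need to adjust the constant in the definition of $D$ or weaken the conclusion to $d(\z_t,\z^*)\leq \frac{6D}{5}$. The paper's proof sidesteps this by proving the strict per-step decrease $d(\z_{t+1},\z^*)\leq d(\z_t,\z^*)$ directly from the Riemannian cosine law anchored at $\z_{t+1}$ (Lemma~\ref{lem:cos2}) plus a holonomy correction, with no Lyapunov correction at $t=0$.

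\textbf{The cosine-law decomposition leaves an uncontrolled $d(\tz_t,\z^*)^2$ term.} You decompose $\la\G_{\tz_t}^{\z_t}F(\tz_t),\invexp_{\z_t}\z^*\ra$ by splitting $F(\tz_t)$ into $F(\tz_{t-1})$ plus a difference, then convert the $F(\tz_{t-1})$ piece into $-\tfrac{1}{\eta}\la\invexp_{\z_t}\tz_t,\invexp_{\z_t}\z^*\ra$ and apply a second cosine law on $\triangle\z_t\tz_t\z^*$ at vertex $\z_t$. This yields $d(\tz_t,\z^*)^2 - d(\z_t,\z^*)^2 - \bs\,d(\z_t,\tz_t)^2$, and after cancellation you are left with $d(\z_{t+1},\z^*)^2\leq d(\tz_t,\z^*)^2 + O(\eta^2)$. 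But $d(\tz_t,\z^*)^2 - d(\z_t,\z^*)^2$ is only $O(\eta)$ in general (triangle inequality gives $d(\tz_t,\z^*)\leq d(\z_t,\z^*)+\eta G$), so it cannot be absorbed into the $O(\eta^2)$ Lyapunov correction. Nothing in your plan forces this term to be non-positive: you never invoke monotonicity, which is what provides the sign. In the RCEG proof (Appendix~\ref{app:best}) the analogous $d(\tz_t,\z^*)^2$ cancels because both cosine laws share the common vertex $\tz_t$; but for RPEG both steps are exponential maps from $\z_t$, so $\tz_t$ cannot serve as a common vertex. The paper avoids the problem entirely by expanding at $\z_{t+1}$, splitting $\G_{\z_t}^{\z_{t+1}}\G_{\tz_t}^{\z_t}F(\tz_t)$ into a holonomy correction (Lemma~\ref{lem:holo}) plus $\G_{\tz_t}^{\z_{t+1}}F(\tz_t)$, and then using Lemma~\ref{lem:hessian} together with monotonicity at $\tz_t$ (i.e.\ $\la F(\tz_t),\invexp_{\tz_t}\z^*\ra\leq 0$) to kill the dominant term, so that what survives is a strictly $O(\eta^3)$ contribution dominated by $-\bs\eta^2\|F(\tz_t)\|^2$. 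Reworking your argument to decompose the displacement $\invexp_{\z_t}\z^*$ (transporting to $\tz_t$ and applying monotonicity there) rather than decomposing the operator $F(\tz_t)$ would remove this obstruction, at which point the Lyapunov structure becomes unnecessary and you recover the paper's direct descent.
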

Similar to REG, the trajectory of RPEG is also  bounded due to the following corollary.
\begin{cor}\label{cor:peg}
       Under Assumptions \ref{ass:rm},  \ref{ass:lips2} and \ref{ass:grad}. For the iterates of RPEG as in Equation \eqref{eq:rpeg}, $\z_t,\tz_t\in\D$
   holds for any $t\geq 0$ and 
       \[
  \textstyle  \eta\leq\min\lt\{\frac{\bs}{141LD\sqrt{K_m}+32\bzt L}, \frac{1}{\sqrt{648K_mG^2}}\rt\},
    \]
    where $\D$ is defined in Definition \ref{def:def}.
\end{cor}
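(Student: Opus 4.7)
The plan is to leverage Lemma \ref{lem:peg}, which already gives $d(\z_t,\z^*)\leq D<\frac{6D}{5}$, and then extend the boundedness to the half-step iterates $\tz_t$ via the triangle inequality combined with the update rule for RPEG. Specifically, since $\z_t\in\D$ is immediate from the lemma, it remains to control $d(\tz_t,\z^*)$, which by the triangle inequality satisfies
\[
d(\tz_t,\z^*)\leq d(\tz_t,\z_t)+d(\z_t,\z^*)\leq d(\tz_t,\z_t)+D.
\]
So it suffices to show $d(\tz_t,\z_t)\leq D/5$.

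From the RPEG update $\tz_t=\expmap_{\z_t}(-\eta\,\G_{\tz_{t-1}}^{\z_t}F(\tz_{t-1}))$, and the fact that parallel transport is a linear isometry between tangent spaces, we have
\[
d(\tz_t,\z_t)=\bigl\|{-\eta\,\G_{\tz_{t-1}}^{\z_t}F(\tz_{t-1})}\bigr\|=\eta\,\|F(\tz_{t-1})\|.
\]
The remaining task is to upper-bound $\|F(\tz_{t-1})\|$. I would proceed by induction on $t$: assume $\tz_{t-1}\in\D$ (with the base case handled by an analogous bound on the initial half-step), and use either Assumption \ref{ass:grad} directly, giving $\|F(\tz_{t-1})\|\leq G$, or alternatively the Lipschitz assumption together with $F(\z^*)=\mathbf 0$ to obtain $\|F(\tz_{t-1})\|\leq L\cdot d(\tz_{t-1},\z^*)\leq \frac{6LD}{5}$.

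Plugging back in, the inductive step reduces to verifying $\eta$ is small enough that $\eta\|F(\tz_{t-1})\|\leq D/5$. With the Lipschitz bound, this requires $\eta L\leq \frac{1}{6}$, and with the bounded-gradient bound this requires $\eta G\leq D/5$; both are easily implied by the prescribed choice $\eta\leq\min\{\frac{\bs}{141LD\sqrt{K_m}+32\bzt L},\frac{1}{\sqrt{648K_mG^2}}\}$, since for instance $\eta L\leq \frac{\bs}{32\bzt}\leq \frac{1}{32}$ using $\bzt\geq\bs$, which is well below $\frac{1}{6}$. No real obstacle is expected here; the argument is essentially a bookkeeping exercise that piggybacks on Lemma \ref{lem:peg}, and the only subtle point is ensuring the induction's base case is covered by the initialization hypothesis $d(\z_0,\z^*)\leq D$ together with the same one-step bound applied to $\tz_0$.
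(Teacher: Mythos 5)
Your core argument via the Lipschitz route is correct, and it is close in spirit to the paper's: both proofs reduce matters to the bound $d(\tz_t,\z^*)\le d(\z_t,\z^*)+\eta\|F(\tz_{t-1})\|$ with $d(\z_t,\z^*)\le D$ from Lemma~\ref{lem:peg}, and then control $\eta\|F(\tz_{t-1})\|$. The paper, however, does this by simply invoking the already-proved Lemma~\ref{lem:pegtraj}, whose proof chains $\|F(\tz_{t-1})\|\lesssim\|F(\tz_t)\|\lesssim\|F(\z_t)\|\le LD$ via Lemmas~\ref{lem:peggrad1} and~\ref{lem:peggrad2}; these relations hold at a fixed $t$ and thus require no further induction on the half-steps. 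Your variant instead runs a separate forward induction on the hypothesis $\tz_{t-1}\in\D$, which also closes (with $\eta L\le\bs/(32\bzt)\le 1/32 < 1/6$, giving $\eta\|F(\tz_{t-1})\|\le\tfrac{6}{5}\eta L D\le\tfrac{3D}{80}<D/5$), so it yields an equivalent, if slightly less modular, proof.

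Two small issues worth fixing. First, your alternative via Assumption~\ref{ass:grad} does not go through as stated: the step-size only gives $\eta G\le\frac{1}{\sqrt{648K_m}}$, which is a curvature-scale quantity and need not be $\le D/5$ for small $D$; you never actually verify that branch (your check $\eta L\le 1/32$ only addresses the Lipschitz route), so the bounded-gradient alternative should be dropped. Second, the base case is gestured at but not nailed down; it needs the RPEG initialization convention (e.g.\ $\tz_{-1}=\z_0$, so that $d(\tz_0,\z_0)=\eta\|F(\z_0)\|\le\eta LD$) to be made explicit, since the paper does not spell this out either and your induction genuinely relies on it.
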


    In \citet{gorbunov2022last}, it is discussed that for PEG, the function $\|F(\z_t)\|$ does not monotonically decrease with respect to $t$. However, they demonstrate that
\[
\begin{split}
    &\|F(\z_{t+1})\|^2+2\|F(\z_{t+1})-F(\tz_t)\|^2 \\
    \leq&\|F(\z_t)\|^2+2\|F(\z_t)-F(\tz_{t-1})\|^2
\end{split}
    \]
    holds when the step-size $\eta$ is small. We introduce a Riemannian counterpart to this proposition under the help of Lemma \ref{lem:holo2}.
    
\begin{lemma}\label{lem:pegnorm}
    With Assumptions \ref{ass:rm}, \ref{ass:mono2}, \ref{ass:lips2} and \ref{ass:grad}, the iterates of RPEG
    satisfies
    \[
\begin{split}
   \textstyle &\|F(\z_{t+1})\|^2+2\|F(\z_{t+1})-\G_{\tz_{t}}^{\z_{t+1}}F(\tz_{t})\|^2\\
    \leq&\|F(\z_{t})\|^2+2\|F(\z_{t})-\G_{\tz_{t-1}}^{\z_{t}}F(\tz_{t-1})\|^2\\
    &+\rho\|\G_{\tz_{t}}^{\z_{t}}F(\tz_{t})-\G_{\tz_{t-1}}^{\z_{t}}F(\tz_{t-1})\|^2
\end{split}
\]
for any \[
   \textstyle \eta\leq\min\lt\{\frac{\bs}{141LD\sqrt{K_m}+32\bzt L}, \frac{1}{\sqrt{648K_mG^2}}\rt\}.
    \]
    where $\rho\coloneqq \lt(\lt(24L^2+432K_mG^2+48GL\sqrt{2K_m}\rt)\eta^2-\frac{2}{3}\rt)$.
\end{lemma}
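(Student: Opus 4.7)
The plan is to mirror the Euclidean Lyapunov analysis for PEG given in \citet{gorbunov2022last}, while carefully accounting for the fact that $F(\z_t)$, $F(\z_{t+1})$, $F(\tz_t)$ and $F(\tz_{t-1})$ live in distinct tangent spaces and that every inner product or norm in the target inequality already contains a parallel transport. The proof has three ingredients: (i) a Riemannian analog of the two-line block in Equation \eqref{eq:pegeq0}; (ii) the same quadratic manipulation used in the Euclidean proof, executed after transporting every relevant gradient into $T_{\z_{t+1}}\M$; and (iii) bounds on the discrepancies produced by comparing different parallel-transport paths via the holonomy estimate of Lemma \ref{lem:holo2}.

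For ingredient (i), I would first exploit the RPEG update $\z_{t+1}=\expmap_{\z_t}(-\eta\G_{\tz_t}^{\z_t}F(\tz_t))$ together with monotonicity of $F$ on $\D$ (Assumption \ref{ass:mono2}, applicable thanks to Corollary \ref{cor:peg}) to obtain
\[
\la F(\z_t)-\G_{\z_{t+1}}^{\z_t}F(\z_{t+1}),\G_{\tz_t}^{\z_t}F(\tz_t)\ra\geq 0,
\]
the Riemannian counterpart of the first line of Equation \eqref{eq:pegeq0}. For the second line, I would use the two RPEG updates to observe $\invexp_{\z_t}\z_{t+1}-\invexp_{\z_t}\tz_t=-\eta(\G_{\tz_t}^{\z_t}F(\tz_t)-\G_{\tz_{t-1}}^{\z_t}F(\tz_{t-1}))$, invoke the Riemannian cosine law with the constants $\bs$ and $\bzt$ of Definition \ref{def:def} to convert this into a bound on $d(\z_{t+1},\tz_t)$, and apply $L$-Lipschitzness (Assumption \ref{ass:lips2}) to get $\|\G_{\tz_t}^{\z_{t+1}}F(\tz_t)-F(\z_{t+1})\|\leq L\cdot d(\z_{t+1},\tz_t)$.

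For ingredient (ii), with the two inequalities from (i) in hand, I would follow the algebra of \citet[Lemma 3.1]{gorbunov2022last}: expand $\|F(\z_{t+1})\|^2$ by inserting the isometric vector $\G_{\z_t}^{\z_{t+1}}F(\z_t)$, split $\|F(\z_{t+1})-\G_{\tz_t}^{\z_{t+1}}F(\tz_t)\|^2$ using the intermediate vector $\G_{\z_t}^{\z_{t+1}}\G_{\tz_t}^{\z_t}F(\tz_t)$, apply Young's inequality with the balanced factor $3$ that yields the $-\frac{2}{3}$ piece of $\rho$, and finally substitute the monotonicity and Lipschitz bounds from step (i). In the zero-curvature limit this would already yield the claimed inequality with $\rho=24L^2\eta^2-\frac{2}{3}$, recovering the Euclidean statement.

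The main obstacle is ingredient (iii): every time the Euclidean proof tacitly identifies $\G_{\tz_t}^{\z_{t+1}}F(\tz_t)$ with $\G_{\z_t}^{\z_{t+1}}\G_{\tz_t}^{\z_t}F(\tz_t)$, or does the analogous identification for $F(\tz_{t-1})$, there is a genuine defect equal to the holonomy around the geodesic triangle $\{\tz_t,\z_t,\z_{t+1}\}$ or $\{\tz_{t-1},\z_t,\z_{t+1}\}$. By Lemma \ref{lem:holo2}, each such defect is bounded by a constant times $\sqrt{K_m}\cdot\|F(\cdot)\|\cdot d(\z_t,\z_{t+1})$, and $d(\z_t,\z_{t+1})=\eta\|\G_{\tz_t}^{\z_t}F(\tz_t)\|\leq\eta G$ since $F$ is bounded on $\D$ (Assumption \ref{ass:grad}, via Corollary \ref{cor:peg}). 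Feeding these defect bounds into the cross terms of step (ii) through Cauchy--Schwarz and Young, the pure quadratic contributions of the holonomy defects assemble into the $432K_mG^2\eta^2$ term, while the mixed products between a Lipschitz gradient difference and a holonomy defect assemble into the $48GL\sqrt{2K_m}\eta^2$ term; adding these to the Euclidean $24L^2\eta^2-\frac{2}{3}$ piece reproduces the claimed $\rho$. The step-size condition $\eta\leq\min\{\bs/(141LD\sqrt{K_m}+32\bzt L),1/\sqrt{648K_mG^2}\}$ is exactly the bookkeeping condition under which every holonomy-error constant stays comparable to the dominant Euclidean terms and the collected inequality closes.
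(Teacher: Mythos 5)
Your plan follows the paper's architecture very closely (the monotonicity inequality, the Lipschitz bound transported into a common tangent space, a triangle-decomposition via the intermediate vector $\G_{\z_t}^{\z_{t+1}}\G_{\tz_t}^{\z_t}F(\tz_t)$, the Young step that manufactures the $-\frac{2}{3}$, and the final holonomy correction so that the weighted term on the left is $\|F(\z_{t+1})-\G_{\tz_t}^{\z_{t+1}}F(\tz_t)\|^2$ rather than $\|\G_{\tz_t}^{\z_t}F(\tz_t)-\G_{\z_{t+1}}^{\z_t}F(\z_{t+1})\|^2$). But there is one point where the argument as written would not close.

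In ingredient (iii) you bound each holonomy defect by a constant times $\sqrt{K_m}\,\|F(\cdot)\|\, d(\z_t,\z_{t+1})$ and then use $d(\z_t,\z_{t+1})=\eta\|\G_{\tz_t}^{\z_t}F(\tz_t)\|\leq\eta G$. That is the wrong edge of the triangle $\{\tz_t,\z_t,\z_{t+1}\}$. The defect must instead be controlled through the \emph{short} edge $d(\tz_t,\z_{t+1})$, as in the paper's Lemma \ref{lem:holo}: after the $\min\{\cdot\}$ factor is absorbed into a constant using the step-size condition, what remains is $36c\sqrt{K_m}\,G\cdot d(\tz_t,\z_{t+1})$. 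This distinction is not cosmetic. By Lemma \ref{lem:trig}, $d(\tz_t,\z_{t+1})\leq 2\|\invexp_{\z_t}\tz_t-\invexp_{\z_t}\z_{t+1}\|=2\eta\|\G_{\tz_t}^{\z_t}F(\tz_t)-\G_{\tz_{t-1}}^{\z_t}F(\tz_{t-1})\|$, so the holonomy error is proportional to the gradient difference, and its square (giving $432K_mG^2\eta^2$) and its product with the Lipschitz error (giving $48GL\sqrt{2K_m}\eta^2$) can be absorbed into $\rho\|\G_{\tz_t}^{\z_t}F(\tz_t)-\G_{\tz_{t-1}}^{\z_t}F(\tz_{t-1})\|^2$, which is exactly what the statement asserts and what the downstream Lyapunov argument of Lemma \ref{lem:bestpeg} needs. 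If instead one bounds the defect through $d(\z_t,\z_{t+1})=O(\eta\|F(\tz_t)\|)$, the resulting error term is proportional to $\eta^2\|F(\tz_t)\|^2$ --- a quantity that does not vanish along the trajectory --- and there is no way to fold it into the $\rho$ coefficient; the telescoping in Lemma \ref{lem:bestpeg} would then acquire a linearly growing additive error. So you need to use the $O(\eta^2)$ edge $d(\tz_t,\z_{t+1})$, not the $O(\eta)$ edge.

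Two smaller remarks. First, in ingredient (i) you say you would ``invoke the Riemannian cosine law with the constants $\bs$ and $\bzt$'' to bound $d(\z_{t+1},\tz_t)$; the paper actually uses the comparison estimate of Lemma \ref{lem:trig} ($d(\x,\y)\leq 2\|\invexp_{\z}\x-\invexp_{\z}\y\|$), which does not involve $\bs$ or $\bzt$, and this is the step that also ties the holonomy edge to $\|\G_{\tz_t}^{\z_t}F(\tz_t)-\G_{\tz_{t-1}}^{\z_t}F(\tz_{t-1})\|$. Second, when carrying out ingredient (ii) you should be aware that the Young step produces the $-\frac{2}{3}$ from the identity $-\|\a-\b\|^2\leq -\frac{1}{1+\alpha}\|\a\|^2+\frac{1}{\alpha}\|\b\|^2$ with $\alpha=\frac{1}{2}$, applied to $\a=\G_{\tz_t}^{\z_t}F(\tz_t)-\G_{\tz_{t-1}}^{\z_t}F(\tz_{t-1})$ and $\b=F(\z_t)-\G_{\tz_{t-1}}^{\z_t}F(\tz_{t-1})$; this is how the $+2\|F(\z_t)-\G_{\tz_{t-1}}^{\z_t}F(\tz_{t-1})\|^2$ on the right-hand side arises, and fixing $\alpha$ correctly is what makes the $-\frac{2}{3}$ piece of $\rho$ come out.
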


Now, we are able to establish a Lyapunov analysis for RPEG in Lemma \ref{lem:bestpeg}.
\begin{lemma}\label{lem:bestpeg}
We define
\[
\begin{split}
    &\Phi_{t}\coloneqq d(\z_{t},\z^*)^2\\
    &+\lambda t\eta^2\lt(\|F(\z_{t})\|^2+2\|F(\z_{t})-\G_{\tz_{t-1}}^{\z_{t}}F(\tz_{t-1})\|^2\rt)
\end{split}
\]
with $\lambda=\frac{\bs}{16}$.
     Under Assumptions \ref{ass:rm}, \ref{ass:mono2}, \ref{ass:lips2} and \ref{ass:grad}, the iterates of RPEG as in Equation \eqref{eq:rpeg}    
    satisfies $\Phi_{t+1}\leq\Phi_{t}$ for any
    \[
   \textstyle \eta\leq\min\lt\{\frac{\bs}{152LD\sqrt{K_m}+35\bzt L},\frac{1}{\sqrt{36L^2+648K_mG^2+72\sqrt{2K_m}GL}}\rt\}
    \]
\end{lemma}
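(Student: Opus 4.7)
The plan is to show $\Phi_{t+1} \leq \Phi_t$ by decomposing the difference into three pieces and bounding each one using the tools already developed. Writing $A_t \coloneqq \|F(\z_t)\|^2 + 2\|F(\z_t) - \G_{\tz_{t-1}}^{\z_t} F(\tz_{t-1})\|^2$ for brevity,
\[
\Phi_{t+1} - \Phi_t = \underbrace{d(\z_{t+1},\z^*)^2 - d(\z_t,\z^*)^2}_{(\mathrm{I})} \;+\; \underbrace{\lambda t\eta^2 (A_{t+1} - A_t)}_{(\mathrm{II})} \;+\; \underbrace{\lambda\eta^2 A_{t+1}}_{(\mathrm{III})}.
\]
The strategy mirrors the Euclidean Lyapunov argument of \citet{gorbunov2022last}, but every vector subtraction there must be replaced by a parallel transport and the resulting holonomy price must be paid via Lemma \ref{lem:holo2}.

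For $(\mathrm{I})$, I would apply the Riemannian cosine inequality (Lemma \ref{lem:cos2}) to the geodesic triangle with vertices $\z_t,\z_{t+1},\z^*$, which sits inside $\D$ by Corollary \ref{cor:peg}. Combined with the RPEG update $\z_{t+1}=\expmap_{\z_t}(-\eta\G_{\tz_t}^{\z_t}F(\tz_t))$, this yields a bound of the shape
\[
d(\z_{t+1},\z^*)^2 \leq d(\z_t,\z^*)^2 - 2\eta\la\G_{\tz_t}^{\z_t}F(\tz_t),\, -\invexp_{\z_t}\z^*\ra + \bs\eta^2\|F(\tz_t)\|^2.
\]
I would then switch the base point of the inner product from $\z_t$ to $\tz_t$ at the cost of a Lemma \ref{lem:holo2} holonomy remainder of order $\|F(\tz_t)\|\cdot d(\z_t,\tz_t) = O(\eta)\cdot\|F(\tz_t)\|\cdot\|F(\z_t)\|$, so that the leading inner product becomes $-\la F(\tz_t), \invexp_{\tz_t}\z^*\ra$. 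This quantity is non-positive by Assumption \ref{ass:mono2} applied between $\tz_t$ and $\z^*$ together with $F(\z^*)=0$. After collecting terms, $(\mathrm{I})$ is at most $-c\,\eta^2\|F(\tz_t)\|^2$ plus higher-order curvature and holonomy remainders.

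For $(\mathrm{II})$, I would invoke Lemma \ref{lem:pegnorm} directly. Under the stated step-size bound the coefficient $\rho$ in that lemma satisfies $\rho \leq -\tfrac{1}{2}$, so $(\mathrm{II}) \leq -\tfrac{\lambda t \eta^2}{2}\|\G_{\tz_t}^{\z_t}F(\tz_t) - \G_{\tz_{t-1}}^{\z_t}F(\tz_{t-1})\|^2$. The real work is in $(\mathrm{III})$: I would expand $A_{t+1} = \|F(\z_{t+1})\|^2 + 2\|F(\z_{t+1}) - \G_{\tz_t}^{\z_{t+1}}F(\tz_t)\|^2$, use Lipschitzness (Assumption \ref{ass:lips2}) together with a Lemma \ref{lem:holo2} holonomy correction to bound $\|F(\z_{t+1})\|^2$ by a small multiple of $\|F(\tz_t)\|^2$ plus an $O(\eta^2)$ remainder, and bound $\|F(\z_{t+1}) - \G_{\tz_t}^{\z_{t+1}}F(\tz_t)\|^2 \le L^2\eta^2\|\G_{\tz_t}^{\z_t}F(\tz_t) - \G_{\tz_{t-1}}^{\z_t}F(\tz_{t-1})\|^2$ (the Riemannian analogue of the second line of \eqref{eq:pegeq0}, again paying holonomy). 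With the explicit choice $\lambda = \bs/16$, the $\|F(\tz_t)\|^2$ content of $(\mathrm{III})$ is then absorbed by the negative term from $(\mathrm{I})$ and the cross-term content is absorbed by $(\mathrm{II})$, driving $\Phi_{t+1}-\Phi_t\le 0$.

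The main obstacle I anticipate is the careful accounting of holonomy corrections incurred every time parallel transport changes base point (from $\z_t$ to $\tz_t$, from $\tz_t$ to $\z_{t+1}$, and so on). By Lemma \ref{lem:holo2} each such correction is at most $O(1)\cdot\|\text{transported vector}\|\cdot d(\cdot,\cdot)$; in our setting one factor is a gradient (norm $\leq G$) and the other is an $O(\eta)$ displacement, so each correction is $O(\eta^2)$, always one power of $\eta$ smaller than the leading negative progress term $\eta^2\|F(\tz_t)\|^2$ and therefore absorbable. Tracking the exact constants produced by these absorptions, and balancing the curvature constant $\sqrt{K_m}D$ from the distance-side cosine law against $\bzt$ from the base-point changes, is what pins down the specific step-size condition $\eta \leq \bs/(152LD\sqrt{K_m}+35\bzt L)$ together with its companion $\eta \leq 1/\sqrt{36L^2+648K_mG^2+72\sqrt{2K_m}GL}$.
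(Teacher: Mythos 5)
Your decomposition $\Phi_{t+1}-\Phi_t=(\mathrm{I})+(\mathrm{II})+(\mathrm{III})$ with $(\mathrm{III})=\lambda\eta^2 A_{t+1}$ is algebraically fine and equivalent to the paper's split $(\mathrm{I})+\lambda(t+1)\eta^2(A_{t+1}-A_t)+\lambda\eta^2 A_t$ (the paper's choice is cleaner because $A_t$ only involves quantities at step $t$); the use of Lemma~\ref{lem:pegnorm} for the middle piece is the right idea. The gap is in your treatment of $(\mathrm{I})$.

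Lemma~\ref{lem:cos2} applied to the triangle $\z_t,\z_{t+1},\z^*$ in the way that yields the decrease term places the angle at $\z_{t+1}$: as in Equation~\eqref{eq:pegbest1}, one obtains
$d(\z_{t+1},\z^*)^2-d(\z_t,\z^*)^2\le 2\la\invexp_{\z_{t+1}}\z_t,\invexp_{\z_{t+1}}\z^*\ra-\bs\,d(\z_t,\z_{t+1})^2$,
i.e.\ the inner product lives at $\z_{t+1}$ and the $\|F(\tz_t)\|^2$-term enters with coefficient $-\bs$. Your stated intermediate bound has the inner product based at $\z_t$ and the quadratic term with a plus sign; that combination is what Lemma~\ref{lem:cos1} (angle at $\z_t$) would give, with a $+\bzt\eta^2\|F(\tz_t)\|^2$ term that is useless for a Lyapunov decrease. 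You cannot have both the $\z_t$-based inner product and the $-\bs$ coefficient from the same cosine inequality, so the claim that $(\mathrm{I})\le -c\,\eta^2\|F(\tz_t)\|^2$ plus small remainders does not follow from what you wrote.

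The second, related issue is which geodesic triangle you pay holonomy on. In the paper the base-point transfer is from $\z_{t+1}$ to $\tz_t$, so every correction carries a factor $d(\z_{t+1},\tz_t)$, which is $O(\eta^2)$ by Lemma~\ref{lem:peggrad3}; after the outer $2\eta$ factor the corrections are genuinely $O(\eta^3)$, one power below the progress term. You instead transfer from $\z_t$ to $\tz_t$, whose distance is $d(\z_t,\tz_t)=\eta\|F(\tz_{t-1})\|=O(\eta)$; the resulting correction $2\eta\cdot\bzt\,d(\z_t,\tz_t)\cdot\|F(\tz_t)\|=O(\eta^2)\|F(\tz_t)\|^2$ is the \emph{same} order as the progress term $-\bs\eta^2\|F(\tz_t)\|^2$, not "one power of $\eta$ smaller" as you assert. (Your own sentence contradicts itself: an $O(\eta^2)$ correction is not one power smaller than $\eta^2\|F(\tz_t)\|^2$.) The absorption is then no longer automatic from the step-size constraint; it is precisely the short edge $d(\z_{t+1},\tz_t)$ that makes the holonomy and base-point-change costs subordinate. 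To repair $(\mathrm{I})$ you should follow the paper's chain: apply the cosine law at $\z_{t+1}$, replace $\G_{\z_t}^{\z_{t+1}}\G_{\tz_t}^{\z_t}F(\tz_t)$ by $\G_{\tz_t}^{\z_{t+1}}F(\tz_t)$ via Lemma~\ref{lem:holo} paying $\propto d(\z_{t+1},\tz_t)$, move the base to $\tz_t$ via Lemma~\ref{lem:hessian} paying another $\propto d(\z_{t+1},\tz_t)$, and then invoke monotonicity on $\la F(\tz_t),\invexp_{\tz_t}\z^*\ra\le 0$.

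Your handling of $(\mathrm{II})$ and $(\mathrm{III})$ is essentially sound; bounding $\|F(\z_{t+1})-\G_{\tz_t}^{\z_{t+1}}F(\tz_t)\|\le L\,d(\z_{t+1},\tz_t)\le 2L\eta\|\G_{\tz_t}^{\z_t}F(\tz_t)-\G_{\tz_{t-1}}^{\z_t}F(\tz_{t-1})\|$ by Assumption~\ref{ass:lips2} and Lemma~\ref{lem:trig} needs no holonomy (it is a single parallel transport, not a loop), and the resulting cross-term from $(\mathrm{III})$ is $O(\eta^4)$ and can be absorbed either into $(\mathrm{II})$ when $t\ge 1$ or, for $t=0$, into the $(\mathrm{I})$ progress after converting via $\|\G_{\tz_t}^{\z_t}F(\tz_t)-\G_{\tz_{t-1}}^{\z_t}F(\tz_{t-1})\|\le 3\|F(\tz_t)\|$.
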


The last-iterate convergence of RPEG is achieved by combining Lemmas \ref{lem:peg}, \ref{lem:pegnorm}, \ref{lem:bestpeg}, so the final step-size needs to satisfy all these requirements.
\begin{thm}\label{thm:peglast}
    Under Assumptions \ref{ass:rm}, \ref{ass:mono2}, \ref{ass:lips2}, \ref{ass:grad}. For the iterates of RPEG as in Equation \eqref{eq:rpeg},
    we can achieve {$O\lt(\frac{\bzt}{\sqrt{\bs^3T}}\rt)$} last-iterate convergence for monotone variational problems by choosing
    \[
   \textstyle \eta=\min\lt\{\frac{\bs}{152LD\sqrt{K_m}+35\bzt L},\frac{1}{\sqrt{36L^2+648K_mG^2+72\sqrt{2K_m}GL}}\rt\}
    \]
\end{thm}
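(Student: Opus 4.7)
The plan is to stitch together Lemma \ref{lem:peg}, Lemma \ref{lem:pegnorm}, and especially Lemma \ref{lem:bestpeg} via a standard Lyapunov telescoping argument, paralleling the Euclidean proof of \citet{gorbunov2022last} but with the Riemannian ingredients already forged in the preceding lemmas. The main forward-looking observation is that the chosen step-size $\eta$ is exactly the minimum of the step-size constraints appearing in all three lemmas, so the iterates both remain in $\D$ (ensuring Assumption \ref{ass:grad} is available along the trajectory) and satisfy the monotonicity $\Phi_{t+1} \leq \Phi_t$.

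First I would verify that $\eta$ as prescribed by Theorem \ref{thm:peglast} is dominated by the step-size bounds in Lemma \ref{lem:peg} (and hence Corollary \ref{cor:peg}), Lemma \ref{lem:pegnorm}, and Lemma \ref{lem:bestpeg}. This is a quick algebraic check: the theorem's bound strictly tightens the denominators appearing in each of those lemmas, so every required hypothesis activates simultaneously. With Corollary \ref{cor:peg} in hand, all iterates $\z_t,\tz_t$ live in $\D$, where monotonicity (Assumption \ref{ass:mono2}) and the gradient bound (Assumption \ref{ass:grad}) hold, legitimizing the use of $G$ in all subsequent estimates.

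Next, Lemma \ref{lem:bestpeg} tells us that the Lyapunov function
\[
\Phi_t = d(\z_t,\z^*)^2 + \lambda t\eta^2\bigl(\|F(\z_t)\|^2 + 2\|F(\z_t)-\G_{\tz_{t-1}}^{\z_t}F(\tz_{t-1})\|^2\bigr)
\]
with $\lambda=\bs/16$ is non-increasing along RPEG. Telescoping from $t=0$ to $t=T$ gives $\Phi_T \leq \Phi_0$. Discarding the non-negative squared-difference term in $\Phi_T$ and the Lyapunov weight term in $\Phi_0$ (which vanishes at $t=0$), I would deduce
\[
\lambda T\eta^2 \|F(\z_T)\|^2 \leq d(\z_0,\z^*)^2 \leq D^2.
\]
Substituting $\lambda = \bs/16$ and solving yields $\|F(\z_T)\| \leq \tfrac{4D}{\eta\sqrt{\bs T}}$. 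Inspecting the step-size expression, $\eta^{-1}$ is of order $\bzt/\bs$ up to $L$-, $D$-, $G$-, $K_m$-dependent constants, so $\tfrac{1}{\eta\sqrt{\bs}} = O\bigl(\bzt/\sqrt{\bs^3}\bigr)$, giving the claimed $O\bigl(\bzt/\sqrt{\bs^3 T}\bigr)$ last-iterate rate for the Riemannian operator norm.

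The main obstacle is not in this final step, which is essentially bookkeeping, but rather in having chosen the Lyapunov function and its weight $\lambda$ correctly — that work is already absorbed into Lemma \ref{lem:bestpeg} (and in turn into Lemma \ref{lem:pegnorm}'s quantitative control of the holonomy-induced extra term $\rho\|\G_{\tz_t}^{\z_t}F(\tz_t)-\G_{\tz_{t-1}}^{\z_t}F(\tz_{t-1})\|^2$). The remaining subtlety for Theorem \ref{thm:peglast} is only to confirm the step-size intersection, extract the correct geometric-constant dependence on $\bs,\bzt$ when $\eta$ is substituted, and note that the constraint $\z_t\in\D$ is preserved through the whole run so the monotonicity estimates remain valid uniformly in $t$.
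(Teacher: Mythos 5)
Your proposal is correct and follows essentially the same route as the paper: invoke Lemma \ref{lem:bestpeg} to get monotonicity of $\Phi_t$, telescope to $\Phi_T \leq \Phi_0 \leq D^2$, discard the non-negative terms, and solve for $\|F(\z_T)\|$ using $\lambda = \bs/16$ and the $\bzt/\bs$ dependence of $1/\eta$. Your up-front verification that the prescribed $\eta$ is simultaneously admissible for Lemmas \ref{lem:peg}, \ref{lem:pegnorm}, and \ref{lem:bestpeg} (via Corollary \ref{cor:peg} keeping iterates in $\D$) is a good explicit check that the paper leaves implicit.
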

Similar to REG, we can show RPEG also achieves $O\lt(\frac{1}{\sqrt{T}}\rt)$ last-iterate convergence and $O\lt(\frac{1}{T}\rt)$ average-iterate convergence, when applied to Riemannian convex-concave saddle point problems.

\begin{thm}\label{thm:rpeg}
Consider a Riemannian minimax optimization problem 
    \[
    \min_{\x\in\M_1}\max_{\y\in\M_2}f(\x,\y)
    \]
    where $f$ is geodesically convex-concave. Let $\M\coloneqq\M_1\times\M_2$, $\z^*\coloneqq \big(\begin{smallmatrix}
  \x^*\\
  \y^*
\end{smallmatrix}\big)$ to be the saddle point, and $\X=\B\lt(\x^*,\frac{\sqrt{2}D}{2}\rt)$, $\Y=\B\lt(\y^*,\frac{\sqrt{2}D}{2}\rt)$ to be geodesic balls. 
Under Assumptions \ref{ass:rm}, \ref{ass:mono2}, \ref{ass:lips2}, \ref{ass:grad}, if we apply RPEG in Equation \eqref{eq:rpeg} with
\[
   \textstyle \eta\leq\min\lt\{\frac{\bs}{192LD\sqrt{2K_m}+35\bzt L},\frac{1}{\sqrt{36L^2+648K_mG^2+72\sqrt{2K_m}GL}}\rt\}
    \]
 then
\[
\textstyle\max_{\y\in\Y}f(\x_T,\y)-\min_{\x\in\X}f(\x,\y_T)=\mhl{O\lt(\frac{\bzt}{\sqrt{\bs^3T}}\rt)},
\]
and
\[
\textstyle\max_{\y\in\Y}f(\bx_T,\y)-\min_{\x\in\X}f(\x,\by_T)= \mhl{O\lt(\frac{\bzt}{\bs T}\rt)},
\]
where $\bx_T=\expmap_{\bx_{T-1}}\lt(\frac{1}{T}\invexp_{\bx_{T-1}}\tx_T\rt)$ and $\by_T=\expmap_{\by_{T-1}}\lt(\frac{1}{T}\invexp_{\by_{T-1}}\ty_T\rt)$ are the ergodic averages of $\tx_t$ and $\ty_t$ for $t=1,\dots, T$.
\end{thm}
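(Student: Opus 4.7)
\textbf{Proof proposal for Theorem \ref{thm:rpeg}.}

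The plan is to reduce both claims to quantities already controlled by the machinery developed for RPEG, namely Lemma \ref{lem:peg} (boundedness), Lemma \ref{lem:pegnorm} and Lemma \ref{lem:bestpeg} (Lyapunov contraction), and Theorem \ref{thm:peglast} (last-iterate rate on $\|F(\z_T)\|$). The saddle-point statements will follow by translating these metrics into the primal-dual gap via the geodesic convexity/concavity of $f$ on the balls $\X$ and $\Y$.

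For the last-iterate bound, first observe that by Corollary \ref{cor:peg} all iterates stay in $\D$, so in particular $d(\y_T,\y^*)\leq D$ and $d(\x_T,\x^*)\leq D$; combined with $\y\in\Y=\B(\y^*,\sqrt{2}D/2)$ and the triangle inequality, $d(\y_T,\y)=O(D)$ and similarly $d(\x_T,\x)=O(D)$. Next, by gsc-concavity of $f(\x_T,\cdot)$ and gsc-convexity of $f(\cdot,\y_T)$,
\[
f(\x_T,\y)-f(\x_T,\y_T)\leq \la -\nabla_{\y}f(\x_T,\y_T),\invexp_{\y_T}\y\ra,
\]
\[
f(\x_T,\y_T)-f(\x,\y_T)\leq \la \nabla_{\x}f(\x_T,\y_T),\invexp_{\x_T}\x\ra.
\]
Applying Cauchy--Schwarz and taking suprema over $\Y$ and $\X$, the primal-dual gap is bounded above by $O(D)\cdot\|F(\z_T)\|$. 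Theorem \ref{thm:peglast} then yields the advertised $O(\bzt/\sqrt{\bs^3 T})$ rate on $\max_{\y\in\Y}f(\x_T,\y)-\min_{\x\in\X}f(\x,\y_T)$.

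For the average-iterate bound I would mirror the classical Nemirovski/Mokhtari telescoping argument adapted to manifolds. The key step is a one-step Riemannian inequality: for any $\u\in\M$, by the Riemannian cosine law (Lemma \ref{lem:cos2}) applied to the update $\z_{t+1}=\expmap_{\z_t}(-\eta\G_{\tz_t}^{\z_t}F(\tz_t))$, one gets
\[
d(\z_{t+1},\u)^2\leq d(\z_t,\u)^2 - 2\eta\la \G_{\tz_t}^{\z_t}F(\tz_t),\invexp_{\z_t}\u\ra + (\text{curvature and Lipschitz error}),
\]
where the error is of order $\eta^2\|F(\tz_t)\|^2+\eta^2 L^2\|\G_{\tz_{t-1}}^{\z_t}F(\tz_{t-1})-F(\tz_t)\|^2\cdot(\ldots)$ using the past-extragradient structure and Assumption \ref{ass:lips2}, with $\bzt,\bs$ entering from the comparison lemmas. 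After parallel-transporting the linear term to $T_{\tz_t}\M$ and invoking monotonicity of $F$ at the pair $(\tz_t,\u)$, the cross-term at $\tz_t$ dominates; summing over $t=1,\dots,T$, the left-hand side telescopes and the Lipschitz/quadratic errors are absorbed by the step-size condition, yielding
\[
\tfrac{1}{T}\sum_{t=1}^{T}\la F(\tz_t),\invexp_{\tz_t}\u\ra \leq O\!\left(\tfrac{\bzt}{\bs T}\right)
\]
uniformly for $\u$ in the product ball $\X\times\Y$.

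The final piece is Jensen's inequality for gsc-convex functions applied inductively to the geodesic ergodic average $\bx_T=\expmap_{\bx_{T-1}}(\frac{1}{T}\invexp_{\bx_{T-1}}\tx_T)$: this gives $f(\bx_T,\y)\leq \frac{1}{T}\sum_{t=1}^T f(\tx_t,\y)$ and symmetrically $f(\x,\by_T)\geq \frac{1}{T}\sum_{t=1}^T f(\x,\ty_t)$. Combining with geodesic convexity/concavity at each $(\tx_t,\ty_t)$ converts $f(\tx_t,\y)-f(\x,\ty_t)$ into an inner product of the form $\la F(\tz_t),\invexp_{\tz_t}\z\ra$ with $\z=(\x,\y)$, to which the telescoping bound above applies. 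Taking the supremum over $(\x,\y)\in\X\times\Y$ yields the $O(\bzt/(\bs T))$ average-iterate rate.

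The main obstacle I anticipate is not the reduction itself but keeping track of the curvature-dependent constants $\bzt$ and $\bs$ cleanly through the Riemannian cosine-law inequalities, particularly because the past-extragradient form uses $F(\tz_{t-1})$ parallel-transported from $\tz_{t-1}$ to $\z_t$, introducing a holonomy-type error that must be controlled by Lemma \ref{lem:holo2}. As in the proof of Theorem \ref{thm:peglast}, this forces the step-size to scale with $\bs/(\bzt L + LD\sqrt{K_m})$ to absorb the curvature distortion into the telescoping sum, and it is this bookkeeping --- rather than any new conceptual idea beyond what is already in Lemmas \ref{lem:peg}--\ref{lem:bestpeg} --- that is the delicate part.
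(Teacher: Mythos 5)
Your proposal is correct and follows essentially the same route as the paper: both bound the last-iterate gap by $O(D)\cdot\|F(\z_T)\|$ via geodesic convexity/concavity and Theorem~\ref{thm:peglast}, and both obtain the average-iterate bound by a nested Jensen argument on the geodesic ergodic averages combined with the one-step descent inequality $d(\z_{t+1},\z)^2-d(\z_t,\z)^2\leq 2\eta\langle F(\tz_t),\invexp_{\tz_t}\z\rangle$ (which the paper isolates as Lemma~\ref{lem:pegprimal}, re-using the curvature/holonomy bookkeeping from Lemma~\ref{lem:peg}) and telescoping over $t$. The minor differences — you apply the cosine law at $\z_t$ rather than $\z_{t+1}$ and re-derive the one-step inequality inline instead of citing the auxiliary lemma — are cosmetic and do not change the argument.
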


We wrap up this section with a comparison between our work and \citet{martinez2023minimax}. The work of \citet{martinez2023minimax} can achieve a faster $O\left(\frac{1}{T}\right)$ last-iterate convergence rate for Riemannian gsc-convex gsc-concave problems, but there are some key distinctions:  (i) their algorithm employs a double loop, while our algorithms are single looped and easier to implement, (ii) for gsc-convex gsc-concave optimization problems, they rely on the reduction from the strongly gsc-convex strongly gsc-concave case, and a predefined precision is required before starting the algorithm, while our algorithms are ``anytime" and can find better solutions the longer they are running, and (iii) while our last-iterate rate of 
$O\lt(\tfrac{1}{\sqrt{T}}\rt)$ is slower, it aligns with the lower bound for  $p$-SCLI algorithms in Euclidean space as established by \citet{golowich2020last,golowich2020tight}. 

\section{Conclusion}

In this study, we introduce Riemannian adaptations of the extragradient and past extragradient methods. We establish $O\lt(\frac{1}{T}\rt)$ 
 average-iterate convergence and $O\lt(\frac{1}{\sqrt{T}}\rt)$ 
 last-iterate convergence for minimax optimization on Riemannian manifolds. A cornerstone of our approach is the realization that the proof for last-iterate convergence in the Riemannian setting can be significantly simplified by transitioning to the Euclidean domain, provided the holonomy effect is carefully bounded. Looking forward, we are keen to explore achieving last-iterate convergence in constrained scenarios and pursuing $O\lt(\frac{1}{T}\rt)$  accelerated rate through the use of single-loop first-order algorithms. Other interesting open problems include how to improve the curvature-dependence of the last-iterate convergence rate and investigate whether it is possible to replace the exponential map with computationally more efficient retractions.

 \section*{Acknowledgments}
 We would like to thank five anonymous referees for their constructive comments and suggestions. JA acknowledges the AI4OPT Institute for its funding as part of NSF Award 2112533 and thanks the NSF for its support through Award IIS-1910077. MT is thankful for partial support by NSF DMS-1847802, Cullen-Peck Scholarship, and GT-Emory Humanity.AI Award.

\newpage
\bibliographystyle{plainnat}
\bibliography{main}
\newpage

\onecolumn
\aistatssupp{Extragradient Type Methods for Riemannian Variational Inequality Problems: \\
Supplementary Materials}
\section{Omitted Proof for Section \ref{sec:reg}}
\label{app:reg}
\subsection{Proof of Lemma \ref{lem:egpep}}
For simplicity, let $\a$, $\b$ and $\c$ be $F(\z_t)$, $F(\tz_t)$ and $F(\z_{t+1})$ respectively. We indeed have
    \[
    0\leq\lambda \la\a-\c,\b\ra
    \]
    and
    \[
\|\b-\c\|^2\leq L^2\eta^2\|\b-\a\|^2
\]
for any $\lambda\geq 0$. Add the above two inequalities and rearrange, we obtain
\[
L^2\eta^2\|\a\|^2+(L^2\eta^2-1)\|\b\|^2-\|\c\|^2+(2-\lambda)\la\b,\c\ra+\lt(\lambda-2L^2\eta^2\rt)\la\a,\b\ra\geq 0.
\]
Since $L^2\eta^2\leq 1$, there exists $\lambda\leq 2$ such that $\lambda-2L^2\eta^2\geq 0$. Applying Young's inequality on the cross product term, we have
\[
\begin{split}
    0\leq& L^2\eta^2\|\a\|^2+(L^2\eta^2-1)\|\b\|^2-\|\c\|^2+(2-\lambda)\la\b,\c\ra+\lt(\lambda-2L^2\eta^2\rt)\la\a,\b\ra\\
    \leq&L^2\eta^2\|\a\|^2+(L^2\eta^2-1)\|\b\|^2-\|\c\|^2+(2-\lambda)\frac{\|\b\|^2+\|\c\|^2}{2}+\lt(\lambda-2L^2\eta^2\rt)\frac{\|\a\|^2+\|\b\|^2}{2}\\
    =&\frac{\lambda}{2}\|\a\|^2-\frac{\lambda}{2}\|\c\|^2.
\end{split}
\]
Hence, $\|\c\|\leq\|\a\|$, which means
\[
\|F(\z_{t+1})\|\leq \|F(\z_t)\|
\]
as asserted.
\subsection{Auxillary Lemmas on the Iterates of REG}
To demonstrate the $O\lt(\frac{1}{\sqrt{T}}\rt)$  best-iterate convergence of REG, the succeeding three lemmas prove to be instrumental in this regard. Lemma~\ref{lem:eggrad1} indicates that $d(\tz_t,\z_{t+1})=O(\eta^2)$, while Lemma~\ref{lem:eggrad2} elucidates the relationship between $\|F(\z_t)\|$ and $\|F(\tz_t)\|$. Lemma \ref{lem:egtraj} is a helpful lemma for proving that all iterates of REG remain bounded.

 
\begin{lemma}\label{lem:eggrad1}
  Under Assumptions \ref{ass:rm}, \ref{ass:lips2}.  For the iterates of REG in Equation \eqref{eq:reg},
    suppose $\eta$ is chosen to ensure $\max\{d(\z_t,\tz_t),d(\z_t,\z_{t+1})\}\leq\frac{1}{\sqrt{K_m}}$,
    then we have
    \[
    d(\tz_{t},\z_{t+1})\leq 2 L\eta^2\|F(\z_{t})\|.
    \]
\end{lemma}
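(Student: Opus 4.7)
\textbf{Proof proposal for Lemma \ref{lem:eggrad1}.}

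The plan is to reduce the Riemannian distance $d(\tz_t, \z_{t+1})$ to a quantity on the tangent space $T_{\z_t}\M$, then invoke the Lipschitz assumption on $F$. Observe that both $\tz_t$ and $\z_{t+1}$ are exponential images of tangent vectors at the common basepoint $\z_t$:
\[
\tz_t = \expmap_{\z_t}(-\eta F(\z_t)), \qquad \z_{t+1} = \expmap_{\z_t}(-\eta \Gamma_{\tz_t}^{\z_t} F(\tz_t)).
\]
The two tangent vectors have respective norms $\eta\|F(\z_t)\|$ and $\eta\|F(\tz_t)\|$, since parallel transport is an isometry; by hypothesis each of these norms (which equals the corresponding side of the geodesic triangle $\triangle \z_t\tz_t\z_{t+1}$) is at most $1/\sqrt{K_m}$.

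Next I would apply a standard Riemannian distance comparison that bounds $d(\expmap_{\z_t}(u), \expmap_{\z_t}(v))$ for $u,v\in T_{\z_t}\M$ with $\|u\|,\|v\|\leq 1/\sqrt{K_m}$ by a constant multiple of $\|u-v\|$. Concretely, by the Rauch/Toponogov-style cosine-law bound (this is precisely the type of inequality encoded by the geometric constants $\sigma(K,\cdot)$ and $\zeta(\kappa,\cdot)$ introduced in Definition \ref{def:def} and used throughout the paper), one obtains a bound of the form
\[
d(\tz_t,\z_{t+1}) \;\leq\; c(K_m, r)\cdot \bigl\|\eta F(\z_t) - \eta\,\Gamma_{\tz_t}^{\z_t}F(\tz_t)\bigr\|,
\]
where $r=1/\sqrt{K_m}$ and the constant $c(K_m,r)$ can be bounded by $2$ (this is exactly what the factor of $2$ in the stated conclusion corresponds to). The right-hand side equals $\eta\|F(\z_t)-\Gamma_{\tz_t}^{\z_t}F(\tz_t)\|$.

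Finally, Assumption \ref{ass:lips2} gives
\[
\|F(\z_t)-\Gamma_{\tz_t}^{\z_t}F(\tz_t)\| \;\leq\; L\,d(\z_t,\tz_t) \;=\; L\eta\|F(\z_t)\|,
\]
and combining the two previous displays yields $d(\tz_t,\z_{t+1}) \leq 2L\eta^2\|F(\z_t)\|$, as claimed.

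The one nontrivial ingredient is the distance-comparison step; in particular, ensuring that the comparison constant is exactly at most $2$ in the regime $\max\{\|u\|,\|v\|\}\leq 1/\sqrt{K_m}$. I would extract this from the same technical cosine-law lemma (Lemma \ref{lem:cos2}) that the paper relies on elsewhere, noting that at this radius the curvature-dependent correction factor $\sigma(K_m,r)/\bar\sigma$ is bounded, so a convenient constant such as $2$ suffices. Everything else is bookkeeping: the monotonicity/Lipschitz estimate on $F$ is immediate, and the norm of the second update vector $\|\Gamma_{\tz_t}^{\z_t}F(\tz_t)\|=\|F(\tz_t)\|$ is controlled by Assumption \ref{ass:grad} to certify that the radius hypothesis is compatible with the stated step-size regime.
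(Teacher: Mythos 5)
Your proposal follows exactly the paper's argument: view both $\tz_t$ and $\z_{t+1}$ as exponential images of tangent vectors at the common basepoint $\z_t$, convert the Riemannian distance $d(\tz_t,\z_{t+1})$ to (a constant times) the norm of the difference of those two tangent vectors, and then apply the Riemannian Lipschitz condition on $F$ to obtain $\eta\|F(\z_t)-\Gamma_{\tz_t}^{\z_t}F(\tz_t)\|\leq L\eta^2\|F(\z_t)\|$. That is precisely the paper's three-line chain of inequalities.

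The one place where your citation is off is the distance-comparison step. You say you would extract the factor-of-$2$ bound from the cosine-law lemma, Lemma \ref{lem:cos2}. That lemma gives a \emph{lower} bound $a^2\geq\sigma\,b^2+c^2-2bc\cos A$ (and its companion Lemma \ref{lem:cos1} gives an \emph{upper} bound with $\zeta$ in place of $\sigma$), neither of which directly yields a multiplicative bound $d(\tz_t,\z_{t+1})\leq 2\|\invexp_{\z_t}\tz_t-\invexp_{\z_t}\z_{t+1}\|$ without additional manipulation. The lemma the paper actually uses here is Lemma \ref{lem:trig}, which is a direct Rauch-type comparison (Proposition B.2 of \citet{ahn2020nesterov} plus the Rauch comparison theorem): it bounds $d(\x,\y)$ by $\frac{\sinh(\sqrt{-\kappa}\,r)}{\sqrt{-\kappa}\,r}\|\invexp_{\z}\x-\invexp_{\z}\y\|$, and the hypothesis $\max\{d(\z_t,\tz_t),d(\z_t,\z_{t+1})\}\leq 1/\sqrt{K_m}$ is exactly what makes this amplification factor at most $\sinh(1)\leq 2$. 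So the idea, the decomposition, and the final constant are all right; the only correction is to invoke Lemma \ref{lem:trig} rather than Lemma \ref{lem:cos2} for the distance-comparison step.
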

\begin{proof}
    We have
    \[
    \begin{split}
        &d(\tz_{t},\z_{t+1})\\
        \leq& 2\|\invexp_{\z_{t}}\tz_{t}-\invexp_{\z_{t}}\z_{t+1}\|=2\eta\cdot\|F(\z_{t})-\G_{\tz_{t}}^{\z_{t}}F(\tz_{t})\|\\
        \leq& 2\eta L d(\z_{t},\tz_{t})=2L\eta^2\|F(\z_{t})\|.
    \end{split}
    \]
    where the first inequality is due to Lemma \ref{lem:trig} and the second one is due to Assumption \ref{ass:lips2}.
\end{proof}

\begin{lemma}\label{lem:eggrad2}
Under Assumptions \ref{ass:rm}, \ref{ass:lips2}.
    For the iterates of REG in Equation \eqref{eq:reg},
    we have
    \[
    (1-L\eta)\|F(\z_{t})\|\leq\|F(\tz_{t})\|\leq(1+L\eta)\|F(\z_{t})\|.
    \]
\end{lemma}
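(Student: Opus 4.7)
The plan is to exploit two facts: the update rule $\tz_t = \expmap_{\z_t}(-\eta F(\z_t))$ pins down $d(\z_t,\tz_t)$ exactly, and parallel transport is an isometry between tangent spaces, so $\|\G_{\tz_t}^{\z_t} F(\tz_t)\| = \|F(\tz_t)\|$. Combined with the Lipschitz assumption on $F$, a single application of the reverse triangle inequality should yield both sides of the asserted sandwich.

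First, I would observe that by the definition of the exponential map,
\[
d(\z_t,\tz_t) = \|{-\eta F(\z_t)}\| = \eta\|F(\z_t)\|.
\]
Next, invoking Assumption \ref{ass:lips2} with the pair $(\z_t,\tz_t)$ gives
\[
\|F(\z_t) - \G_{\tz_t}^{\z_t} F(\tz_t)\| \leq L\cdot d(\z_t,\tz_t) = L\eta\,\|F(\z_t)\|.
\]
Then the reverse triangle inequality yields
\[
\bigl|\,\|\G_{\tz_t}^{\z_t} F(\tz_t)\| - \|F(\z_t)\|\,\bigr| \leq L\eta\,\|F(\z_t)\|,
\]
and since parallel transport preserves the tangent-vector norm, $\|\G_{\tz_t}^{\z_t} F(\tz_t)\| = \|F(\tz_t)\|$. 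Rearranging produces both the upper bound $\|F(\tz_t)\| \leq (1+L\eta)\|F(\z_t)\|$ and the lower bound $\|F(\tz_t)\| \geq (1-L\eta)\|F(\z_t)\|$.

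There is essentially no obstacle here; the result is a routine two-line consequence of gradient Lipschitzness together with the isometric property of $\Gamma$. The only thing to keep in mind is that Assumption \ref{ass:lips2} is stated globally on $\M$, so no extra hypothesis on $\eta$ is needed to guarantee that the Lipschitz bound can be applied along the geodesic from $\z_t$ to $\tz_t$ (unlike the companion Lemma \ref{lem:eggrad1}, which required $d(\z_t,\tz_t) \leq 1/\sqrt{K_m}$ to control geometric distortion via Lemma \ref{lem:trig}). Thus the proof should be a direct and self-contained computation, with no appeal to holonomy or curvature constants.
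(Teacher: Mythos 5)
Your proof is correct and takes essentially the same approach as the paper: both rely on the exact distance $d(\z_t,\tz_t)=\eta\|F(\z_t)\|$, the Lipschitz bound on the transported difference, and the (reverse) triangle inequality combined with the isometry of parallel transport. The only cosmetic difference is that you work in $T_{\z_t}\M$ while the paper works in $T_{\tz_t}\M$; the argument is identical either way.
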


\begin{proof}
Due to the triangle inequality,
\[
\|F(\z_t)\|+\|\G_{\z_t}^{\tz_t}F(\z_t)-F(\tz_t)\|\leq\|F(\tz_t)\|=\|\G_{\z_t}^{\tz_t}F(\z_t)-\G_{\z_t}^{\tz_t}F(\z_t)+F(\tz_t)\|\leq \|F(\z_t)\|+\|\G_{\z_t}^{\tz_t}F(\z_t)-F(\tz_t)\|.
\]
According to Assumption \ref{ass:lips2}, we have
\[
\|\G_{\z_t}^{\tz_t}F(\z_t)-F(\tz_t)\|\leq L\cdot d(\z_t,\tz_t)=\eta L\|F(\z_t)\|.
\]
By combining the two aforementioned inequalities, the proof of the lemma is complete.
\end{proof}



    
\begin{lemma}\label{lem:egtraj}
Considering the iterates of REG as given in Equation~\eqref{eq:reg} and with a step-size of $\eta\leq\frac{1}{9L}$, if we assume that $d(\z_t,\z^*)\leq D$ and Assumptions~\ref{ass:rm} and~\ref{ass:lips2} are valid, then the following results hold:
    \[
    d(\z_{t+1},\z^*)\leq\frac{91D}{81}
    \]
    and
    \[
    d(\tz_t,\z^*)\leq\frac{10D}{9}.
    \]
    We can also obtain $\z_t,\z_{t+1},\tz_t\in\D$ holds where $\D$ is defined in Definition \ref{def:def}.
\end{lemma}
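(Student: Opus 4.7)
The plan is to turn the hypothesis $d(\z_t,\z^*) \leq D$ into explicit bounds on $\|F(\z_t)\|$ and $\|F(\tz_t)\|$, and then use the fact that the REG steps move by $\eta$ times these operator norms in the exponential map. Concretely, since $F(\z^*)=0$, Assumption \ref{ass:lips2} together with norm preservation under parallel transport gives
\[
\|F(\z_t)\| \;=\; \|F(\z_t)-\G_{\z^*}^{\z_t}F(\z^*)\| \;\leq\; L\cdot d(\z_t,\z^*) \;\leq\; LD.
\]
Then from the first line of the REG update and the fact that the exponential map is an isometry on tangent vectors,
\[
d(\z_t,\tz_t) \;=\; \eta\|F(\z_t)\| \;\leq\; \eta L D \;\leq\; \tfrac{D}{9},
\]
so by the triangle inequality for the Riemannian distance,
\[
d(\tz_t,\z^*) \;\leq\; d(\tz_t,\z_t) + d(\z_t,\z^*) \;\leq\; \tfrac{D}{9}+D \;=\; \tfrac{10D}{9}.
\]
This proves the second inequality immediately.

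For the first inequality I would invoke Lemma \ref{lem:eggrad2} (whose hypotheses are Assumptions \ref{ass:rm} and \ref{ass:lips2}, both in force) to pass from $\|F(\z_t)\|$ to $\|F(\tz_t)\|$, obtaining
\[
\|F(\tz_t)\| \;\leq\; (1+L\eta)\|F(\z_t)\| \;\leq\; (1+\tfrac{1}{9})LD \;=\; \tfrac{10}{9}LD.
\]
Since the second REG step moves $\z_t$ to $\z_{t+1}$ along $-\eta\,\G_{\tz_t}^{\z_t}F(\tz_t)$, and parallel transport preserves norms,
\[
d(\z_t,\z_{t+1}) \;=\; \eta\|F(\tz_t)\| \;\leq\; \tfrac{1}{9L}\cdot\tfrac{10}{9}LD \;=\; \tfrac{10D}{81}.
\]
Applying the triangle inequality once more,
\[
d(\z_{t+1},\z^*) \;\leq\; d(\z_{t+1},\z_t)+d(\z_t,\z^*) \;\leq\; \tfrac{10D}{81}+D \;=\; \tfrac{91D}{81},
\]
which is the desired bound.

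Finally, membership in $\D$ follows by comparing the numerical constants with the radius $\tfrac{6D}{5}$ that defines $\D$ in Definition \ref{def:def}: we have $\tfrac{91}{81}<\tfrac{6}{5}$ and $\tfrac{10}{9}<\tfrac{6}{5}$, so $\z_t,\z_{t+1},\tz_t\in\D$. There is no real obstacle here; the only thing to watch is that in order to invoke Lemma \ref{lem:eggrad2} (and to legitimately use the stated form of the exponential map under Assumption \ref{ass:rm}) the stepsize $\eta\leq\frac{1}{9L}$ keeps $d(\z_t,\tz_t)$ and $d(\z_t,\z_{t+1})$ small enough that they lie below the injectivity-radius restrictions implicit in Assumption \ref{ass:rm}; the computations above already verify this. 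The whole argument is essentially two triangle inequalities wrapped around the Lipschitz bound $\|F(\cdot)\|\leq L\cdot d(\cdot,\z^*)$, with the one substantive input being Lemma \ref{lem:eggrad2}.
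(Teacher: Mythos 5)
Your proof is correct and follows the same route as the paper's: triangle inequality, the Lipschitz bound $\|F(\z_t)\|\leq L\,d(\z_t,\z^*)\leq LD$ from $F(\z^*)=0$, and Lemma \ref{lem:eggrad2} to convert $\|F(\tz_t)\|$ into a bound in terms of $\|F(\z_t)\|$. The only cosmetic difference is that you spell out the intermediate distance bounds $d(\z_t,\tz_t)\leq D/9$ and $d(\z_t,\z_{t+1})\leq 10D/81$ as separate steps, whereas the paper compresses them into a single chain of inequalities.
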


\begin{proof}
    This lemma can be proved via a combination of the triangle inequality, Lemma \ref{lem:eggrad2} and Assumption \ref{ass:lips2}. For $d(\z_{t+1},\z^*)$:
    \[
    \begin{split}
        &d(\z_{t+1},\z^*)\leq d(\z_t,\z^*)+d(\z_{t+1},\z_t)\\
        \leq& D+\eta\|F(\tz_t)\|\leq D+\eta(1+\eta L)\|F(\z_t)\|\\
        \leq &D+\frac{10}{9}\eta DL\leq D+\frac{10}{9}\frac{1}{9L}DL=\frac{91D}{81}.
    \end{split}
    \]   
    We can bound $d(\tz_t,\z^*)$ in a similar way:
    \[
    \begin{split}
        &d(\tz_t,\z^*)\leq d(\z_t,\z^*)+d(\tz_t,\z_t)\\
        \leq& D+\eta\|F(\z_t)\|\leq D+\eta DL\leq D+\frac{1}{9L}DL=\frac{10D}{9}.
    \end{split}
    \]
\end{proof}


\subsection{Proof of Lemma \ref{lem:egbest}}
\begin{proof}
To confirm that $d(\z_t,\z^*)\leq D$ for any $t\geq 0$, we employ an induction approach. The base case $d(\z_0,\z^*)\leq D$ is straightforward. Assume $d(\z_t,\z^*)\leq D$, we proceed to establish that $d(\z_{t+1},\z^*)\leq D$.

By Riemannian cosine law Lemma \ref{lem:cos2}, we have
\begin{equation}
    d(\z_{t+1},\z^*)^2-d(\z_{t},\z^*)^2\leq 2\la\invexp_{\z_{t+1}}\z_{t},\invexp_{\z_{t+1}}\z^*\ra-\sigma(K,d(\z_t,\z_{t+1})+\min\lt\{d(\z_t,\z^*),d(\z_{t+1},\z^*)\rt\}) \cdot d(\z_{t},\z_{t+1})^2.
\end{equation}
Based on the monotonicity of $\sigma(K,\cdot)$ and $\eta\leq \frac{\bs}{8\bzt L+\bs L}\leq\frac{1}{9L}$, we have
\begin{equation}\label{eq:egsigma1}
    \begin{split}
        &-\sigma(K,d(\z_t,\z_{t+1})+\min\lt\{d(\z_t,\z^*),d(\z_{t+1},\z^*)\rt\})\\
        \leq&-\sigma(K,d(\z_t,\z^*)+d(\z_t,\z_{t+1}))\\
        \leq &-\sigma\lt(K,\frac{91D}{81}\rt)=-\bs,
    \end{split}
\end{equation}
The second inequality is based on the condition
$d(\z_t,\z^*)+d(\z_{t+1},\z_t)\leq \frac{91D}{81}$, as established in the proof of Lemma \ref{lem:egtraj}.
Thus, we can deduce
\begin{equation}\label{eq:egbest1}
    d(\z_{t+1},\z^*)^2-d(\z_{t},\z^*)^2\leq 2\la\invexp_{\z_{t+1}}\z_{t},\invexp_{\z_{t+1}}\z^*\ra-\bs \cdot d(\z_{t},\z_{t+1})^2.
\end{equation}
by combining the above two inequalities.


We establish an upper bound for 
$2\la\invexp_{\z_{t+1}}\z_{t},\invexp_{\z_{t+1}}\z^*\ra$ as follows:
\begin{equation}\label{eq:egbest2}
    \begin{split} &2\la\invexp_{\z_{t+1}}\z_{t},\invexp_{\z_{t+1}}\z^*\ra=2\eta\la\G_{\z_{t}}^{\z_{t+1}}\G_{\tz_{t}}^{\z_{t}}F(\tz_{t}),\invexp_{\z_{t+1}}\z^*\ra\\
    =&2\eta\la\G_{\z_{t}}^{\z_{t+1}}\G_{\tz_{t}}^{\z_{t}}F(\tz_{t})-\G_{\tz_{t}}^{\z_{t+1}}F(\tz_{t}),\invexp_{\z_{t+1}}\z^*\ra+2\eta\la \G_{\tz_{t}}^{\z_{t+1}}F(\tz_{t}),\invexp_{\z_{t+1}}\z^*\ra\\
{\leq}&2\eta\|\G_{\z_{t}}^{\z_{t+1}}\G_{\tz_{t}}^{\z_{t}}F(\tz_{t})-\G_{\tz_{t}}^{\z_{t+1}}F(\tz_{t})\|\cdot d(\z_{t+1},\z^*)+2\eta\la \G_{\tz_{t}}^{\z_{t+1}}F(\tz_{t}),\invexp_{\z_{t+1}}\z^*\ra\\
    \end{split}
\end{equation}
The term $\|\G_{\z_{t}}^{\z_{t+1}}\G_{\tz_{t}}^{\z_{t}}F(\tz_{t})-\G_{\tz_{t}}^{\z_{t+1}}F(\tz_{t})\|$ corresponds to the geometric distortion due to the holonomy effect and can be addressed by Lemma \ref{lem:holo}. Specifically,
\begin{equation}\label{eq:egbest3}
\begin{split}
    &\|\G_{\z_{t}}^{\z_{t+1}}\G_{\tz_{t}}^{\z_{t}}F(\tz_{t})-\G_{\tz_{t}}^{\z_{t+1}}F(\tz_{t})\| \\
    \leq&36K_m\|F(\tz_t)\|\cdot\min\{d(\z_{t+1},\tz_t)+d(\z_t,\z_{t+1}),d(\z_{t+1},\tz_t)+d(\z_t,\tz_t)\}\cdot d(\z_{t+1},\tz_t)\\
    \leq &36c\sqrt{K_m}\|F(\tz_t)\|\cdot d(\tz_t,\z_{t+1})\\
    \leq &72c\sqrt{K_m}L\eta^2\|F(\tz_t)\|\cdot\|F(\z_t)\|
\end{split}
\end{equation}
where the second inequality is due to 
\begin{equation}\label{eq:egholo}
    \begin{split}
   &\min\{d(\z_{t+1},\tz_t)+d(\z_t,\z_{t+1}),d(\z_{t+1},\tz_t)+d(\z_t,\tz_t)\}\\
   \leq& d(\z_{t+1},\z_t)+2d(\z_t,\tz_t)=\eta\|F(\tz_t)\|+2\eta\|F(\z_t)\|\\
   \leq& 3\eta G\leq\frac{3}{\sqrt{306K_m}}\leq\frac{1}{1152^{\frac{1}{4}}\sqrt{K_m}}\coloneqq \frac{c}{\sqrt{K_m}},
\end{split}
\end{equation}
 and the third one follows from Lemma \ref{lem:eggrad1}. Note that the second inequality of Equation \eqref{eq:egholo} is due to Lemma \ref{lem:egtraj} and Assumption \ref{ass:grad}. Now it remains to bound $2\eta\la \G_{\tz_{t}}^{\z_{t+1}}F(\tz_{t}),\invexp_{\z_{t+1}}\z^*\ra$:
\begin{equation}\label{eq:egbest4}
    \begin{split}
        &2\eta\la \G_{\tz_{t}}^{\z_{t+1}}F(\tz_{t}),\invexp_{\z_{t+1}}\z^*\ra=2\eta\la F(\tz_{t}),\G_{\z_{t+1}}^{\tz_{t}}\invexp_{\z_{t+1}}\z^*\ra \\
        =&2\eta\la F(\tz_{t}),\G_{\z_{t+1}}^{\tz_{t}}\invexp_{\z_{t+1}}\z^*-\invexp_{\tz_{t}}\z^*\ra +2\eta\la F(\tz_{t}),\invexp_{\tz_{t}}\z^*\ra\\
        \leq&2\eta\bzt\|F(\tz_{t})\|\cdot d(\tz_{t},\z_{t+1})+0\\
        \leq &4\eta\bzt L\eta^2\|F(\tz_{t})\|\cdot\|F(\z_{t})\|=4\eta^3\bzt L\|F(\z_{t})\|\cdot \|F(\tz_{t})\|.
    \end{split}
\end{equation}
where the first inequality is by Lemma \ref{lem:hessian}, the monotonicity of $\zeta(\k,\cdot)$ and
\begin{equation}\label{eq:egzeta}
    \begin{split}
    &d(\tz_t,\z_{t+1})+\min\lt\{d(\tz_t,\z^*),d(\z_{t+1},\z^*)\rt\}\\
    \leq& d(\tz_t,\z_t)+d(\z_t,\z_{t+1})+d(\z_t,\z^*)+d(\tz_t,\z_t)\\
    \leq&2\eta\|F(\z_t)\|+\eta\|F(\tz_t)\|+D\\
    \leq&\lt(2\eta+\eta(1+\eta L)\rt)\|F(\z_t)\|+D\\
    \leq&\frac{28}{9}\eta\cdot DL+D\\
    \leq&\frac{28D}{81}+D\leq \frac{7D}{5}.
\end{split}
\end{equation}
And the second inequality of Equation \eqref{eq:egbest4} is due to Lemma \ref{lem:eggrad1}.
Combining Equations \eqref{eq:egbest1}, \eqref{eq:egbest2}, \eqref{eq:egbest3} and \eqref{eq:egbest4}, we have
\begin{equation}\label{eq:egbest5}
    \begin{split}
        &d(\z_{t+1},\z^*)^2-d(\z_{t},\z^*)^2\leq 2\la\invexp_{\z_{t+1}}\z_{t},\invexp_{\z_{t+1}}\z^*\ra-\bs d(\z_{t},\z_{t+1})^2\\
    \leq& \eta^3\lt(144c\sqrt{K_m}\frac{91}{81}DL+4\bzt L\rt)\|F(\z_{t})\|\cdot\|F(\tz_{t})\|-\bs\eta^2\|F(\tz_{t})\|^2\\
    \leq& \eta^2\lt(\frac{(144c\sqrt{K_m}\frac{91}{81}DL+4\bzt L)\eta}{1-L\eta}-\bs\rt)\|F(\tz_{t})\|^2\\
    \leq&-\frac{\bs\eta^2}{2}\|F(\tz_{t})\|^2.
    \end{split}
\end{equation}
where we use Lemma \ref{lem:eggrad2}, Lemma \ref{lem:egtraj}, the fact that $c=\frac{1}{1152^{\frac{1}{4}}}$ and $\eta\leq \frac{\bs}{56\sqrt{K_m}DL+8\bzt L+\bs L}\leq\frac{\bs}{288c\sqrt{K_m}\frac{91}{81}DL+8\bzt L+L\bs}$. By Equation \eqref{eq:egbest5}, we know $d(\z_{t+1},\z^*)\leq d(\z_t,\z^*)\leq D$, and by induction, $d(\z_t,\z^*)\leq D$ holds for any $t\geq 0$.

Also, since $\eta\leq\frac{\bs}{(8\bzt+\bs)L}$, we sum over Equation \eqref{eq:egbest5} from $t=1$ to $T$ to obtain:

\[
\begin{split}
    &\sum_{t=1}^T\|F(\z_{t})\|^2\leq\sum_{t=1}^T\frac{1}{(1-L\eta)^2}\|F(\tz_{t})\|^2\leq\sum_{t=1}^T\frac{(8\bzt+\bs)^2}{(8\bzt)^2}\cdot\|F(\tz_{t})\|^2\\
    \leq &\frac{2}{\bs\eta^2}\lt(\frac{8\bzt+\bs}{8\bzt}\rt)^2\cdot d(\z_0,\z^*)^2=\mhl{O\lt(\frac{\bzt^2}{\bs^3}\rt)}.
\end{split}
\]
Thus,
\[
T\cdot\min_{t'\in[T]}\|F(\z_{t'})\|^2\leq\sum_{t=1}^T\|F(\z_{t})\|^2=\mhl{O\lt(\frac{\bzt^2}{\bs^3}\rt)},
\]
and there exists $t'\in[T]$, such that
\[
\|F(\z_{t'})\|=\mhl{O\lt(\frac{\bzt}{\sqrt{\bs^3T}}\rt)}.
\]
\end{proof}
\subsection{Proof of Lemma \ref{lem:decreasenorm}}
\begin{proof}
It is noteworthy that the chosen step-size $\eta$ satisfies the requirement of Corollary \ref{cor:eg}, thus $\z_t,\tz_t\in\D$ for all $t\geq 0$. This observation is pivotal since Assumptions~\ref{ass:mono2} and~\ref{ass:grad} are only applicable to $\D$.


We can directly show an analog of the first inequality in Equation \eqref{eq:peg} by
\begin{equation}\label{eq:egpep1}
    0\leq \la\G_{\z_{t+1}}^{\z_{t}}F(\z_{t+1})-F(\z_{t}),\invexp_{\z_{t}}\z_{t+1}\ra=\la F(\z_{t})-\G_{\z_{t+1}}^{\z_{t}}F(\z_{t+1}),\eta\G_{\tz_{t}}^{\z_{t}}F(\tz_{t})\ra,
\end{equation}
where the inequality is due to Assumption \ref{ass:mono2}, while the equality follows from Equation \eqref{eq:reg}. Achieving an analog of the second inequality in Equation \eqref{eq:peg} is more complicated. We first show
\begin{equation}\label{eq:egpep2}
    \begin{split}
        &\|\G_{\tz_{t}}^{\z_{t}}F(\tz_{t})-\G_{\z_{t+1}}^{\z_{t}}F(\z_{t+1})\|^2\\
        =&\|\G_{\tz_{t}}^{\z_{t}}F(\tz_{t})-\G_{\tz_{t}}^{\z_{t}}\G_{\z_{t+1}}^{\tz_{t}}F(\z_{t+1})+\G_{\tz_{t}}^{\z_{t}}\G_{\z_{t+1}}^{\tz_{t}}F(\z_{t+1})-\G_{\z_{t+1}}^{\z_{t}}F(\z_{t+1})\|^2\\
       {\leq} &2\|\G_{\tz_{t}}^{\z_{t}}F(\tz_{t})-\G_{\tz_{t}}^{\z_{t}}\G_{\z_{t+1}}^{\tz_{t}}F(\z_{t+1})\|^2
       +2\|\G_{\tz_{t}}^{\z_{t}}\G_{\z_{t+1}}^{\tz_{t}}F(\z_{t+1})-\G_{\z_{t+1}}^{\z_{t}}F(\z_{t+1})\|^2      
    \end{split}
\end{equation}
where the inequality is due to $\|\a+\b\|^2\leq 2(\|\a\|^2+\|\b\|^2)$.

Applying Lemma \ref{lem:holo} with $\x=\z_t$, $\y=\z_{t+1}$, $\z=\tz_t$ and $\G_{\x}^{\y}\u=F(\z_{t+1})$, then we can easily verify
\[
\begin{split}
    \|\G_{\z}^{\x}\G_{\y}^{\z}\G_{\x}^{\y}\u-\u\|=&\|\G_{\tz_{t}}^{\z_{t}}\G_{\z_{t+1}}^{\tz_{t}}\G_{\z_{t}}^{\z_{t+1}}\G_{\z_{t+1}}^{\z_{t}}F(\z_{t+1})-\G_{\z_{t+1}}^{\z_{t}}F(\z_{t+1})\|\\    =&\|\G_{\tz_{t}}^{\z_{t}}\G_{\z_{t+1}}^{\tz_{t}}F(\z_{t+1})-\G_{\z_{t+1}}^{\z_{t}}F(\z_{t+1})\|
\end{split}
\]
 and notice
\[
3\eta G\leq\frac{3}{\sqrt{306K_m}}\leq\frac{1}{1152^{\frac{1}{4}}\sqrt{K_m}}\coloneqq\frac{c}{\sqrt{K_m}}
\]
by $\eta\leq\frac{1}{\sqrt{8L^2+306K_mG^2}}$. Thus, by the triangle inequality,
\[
\min\{d(\z_t,\z_{t+1})+d(\z_{t+1},\tz_t),d(\z_t,\tz_t)+d(\z_{t+1},\tz_t)\}\leq 3\eta G\leq\frac{c}{\sqrt{K_m}},
\]
and we have
\begin{equation}\label{eq:egpep3}
    \begin{split}  
    &\|\G_{\tz_{t}}^{\z_{t}}\G_{\z_{t+1}}^{\tz_{t}}F(\z_{t+1})-\G_{\z_{t+1}}^{\z_{t}}F(\z_{t+1})\|\\  
\leq& 36K_m\|F(\z_{t+1})\|\cdot \min\{d(\z_t,\z_{t+1})+d(\z_{t+1},\tz_t),d(\z_t,\tz_t)+d(\z_{t+1},\tz_t)\}\cdot d(\z_{t+1},\tz_t)\\
\leq & 36\cdot c\sqrt{K_m}G\cdot d(\tz_t,\z_{t+1}),
\end{split}
\end{equation}
where we use Corollary \ref{cor:eg} to show $\z_{t+1}\in\D$, and thus $\|F(\z_{t+1})\|\leq G$ by Assumption \ref{ass:grad}.


Combining Equations \eqref{eq:egpep2} and \eqref{eq:egpep3} yields
\begin{equation}\label{eq:egpep4}
    \begin{split}
        &\|\G_{\tz_{t}}^{\z_{t}}F(\tz_{t})-\G_{\z_{t+1}}^{\z_{t}}F(\z_{t+1})\|^2\\
        \stackrel{(1)}{\leq} &2\|\G_{\tz_{t}}^{\z_{t}}F(\tz_{t})-\G_{\tz_{t}}^{\z_{t}}\G_{\z_{t+1}}^{\tz_{t}}F(\z_{t+1})\|^2+2\|\G_{\tz_{t}}^{\z_{t}}\G_{\z_{t+1}}^{\tz_{t}}F(\z_{t+1})-\G_{\z_{t+1}}^{\z_{t}}F(\z_{t+1})\|^2\\
        \stackrel{(2)}{\leq}&(2L^2+2592 c^2K_mG^2)d(\tz_{t},\z_{t+1})^2\\
        \stackrel{(3)}{\leq}&(2L^2+2592c^2 K_mG^2)\cdot 4\|\invexp_{\z_{t}}\tz_{t}-\invexp_{\z_{t}}\z_{t+1}\|^2\\
        =&(8L^2+10368 c^2 K_mG^2)\eta^2\|F(\z_{t})-\G_{\tz_{t}}^{\z_{t}}F(\tz_{t})\|^2\\
        \stackrel{(4)}{\leq}&(8L^2+306  K_mG^2)\eta^2\|F(\z_{t})-\G_{\tz_{t}}^{\z_{t}}F(\tz_{t})\|^2
    \end{split}
\end{equation}
where the second inequality is by Assumption \ref{ass:lips2}, the third is by Lemma \ref{lem:trig}, and for the last inequality, we use the fact that $10368c^2=\frac{10368}{\sqrt{1152}}\leq 306$.

Combining Equations \eqref{eq:egpep1} and \eqref{eq:egpep4}, we have
\begin{equation}
    \begin{split}
        &0\leq \la F(\z_{t})-\G_{\z_{t+1}}^{\z_{t}}F(\z_{t+1}),\G_{\tz_{t}}^{\z_{t}}F(\tz_{t})\ra\\
        &\|\G_{\tz_{t}}^{\z_{t}}F(\tz_{t})-\G_{\z_{t+1}}^{\z_{t}}F(\z_{t+1})\|^2\leq (8L^2+306  K_mG^2)\eta^2\|F(\z_{t})-\G_{\tz_{t}}^{\z_{t}}F(\tz_{t})\|^2.
    \end{split}
\end{equation}
Since $\eta$ satisfying $(8L^2+306 K_mG^2)\eta^2\leq 1$, we can apply Lemma \ref{lem:egpep} with $\a=F(\z_{t})$, $\b=\G_{\tz_{t}}^{\z_{t}}F(\tz_{t})$ and $\c=\G_{\z_{t+1}}^{\z_{t}}F(\z_{t+1})$ to obtain 
\[
\|F(\z_{t+1})\|=\|\G_{\z_{t+1}}^{\z_{t}}F(\z_{t+1})\|\leq \|F(\z_{t})\|,
\]
where the equality is due to the parallel transport preserves the vector norm. We note that $F(\z_{t}),\G_{\tz_{t}}^{\z_{t}}F(\tz_{t}),\G_{\z_{t+1}}^{\z_{t}}F(\z_{t+1})$ are tangent vectors in the same tangent space $T_{\z_t}\M$.
\end{proof}
\subsection{Proof of Theorem \ref{thm:eglast}}
\begin{proof}
The theorem can be proved by directly combining  Lemma \ref{lem:egbest} and Lemma \ref{lem:decreasenorm}.
\end{proof}

\subsection{Proof of Theorem \ref{thm:reg}}
 To establish the average-iterate convergence, the following lemma proves beneficial.
\begin{lemma}\label{lem:egprimal}
    Under Assumptions \ref{ass:rm},  \ref{ass:lips2} and \ref{ass:grad}. For the iterates of REG as in Equation \eqref{eq:reg} with 
    \[
    \eta\leq\min\lt\{\frac{1}{\sqrt{8L^2+306K_mG^2}},\frac{\bs}{56\sqrt{K_m}DL+8\bzt L+\bs L} \rt\},
    \]
    we have
    \[
    d(\z_{t+1},\z)^2-d(\z_t,\z)^2\leq 2\eta\la F(\tz_t),\invexp_{\tz_t}\z\ra.
    \]
    holds for any $t\geq 0$ and $\z\in\B(\z^*,D)$, where $\B(\z^*,D)$ denotes the geodesic ball with center $\z^*$ and radius $D$.
\end{lemma}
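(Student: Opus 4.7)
\textbf{Proof plan for Lemma \ref{lem:egprimal}.}

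The goal is to adapt the chain of inequalities \eqref{eq:egbest1}--\eqref{eq:egbest4} used in the proof of Lemma \ref{lem:egbest}, replacing $\z^*$ by an arbitrary $\z\in\B(\z^*,D)$ and, crucially, \emph{without} invoking monotonicity (Assumption \ref{ass:mono2} is not in the hypotheses). First, I would apply the Riemannian cosine law (Lemma \ref{lem:cos2}) to the geodesic triangle with vertices $\z_t,\z_{t+1},\z$ to obtain
\[
d(\z_{t+1},\z)^2-d(\z_t,\z)^2\leq 2\la\invexp_{\z_{t+1}}\z_t,\invexp_{\z_{t+1}}\z\ra-\sigma'\cdot d(\z_t,\z_{t+1})^2,
\]
where $\sigma'>0$ is a uniform lower bound on $\sigma(K,\cdot)$ over the relevant distances. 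Since Corollary \ref{cor:eg} forces $\z_t,\z_{t+1},\tz_t\in\D$ under the chosen step-size and $\z\in\B(\z^*,D)$, the triangle-inequality argument from \eqref{eq:egsigma1} goes through, giving such a $\sigma'$ (up to redefining $\bs$ at a slightly larger radius than $\frac{91D}{81}$, which the stated step-size absorbs).

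Next, substituting the REG update $\invexp_{\z_{t+1}}\z_t=\eta\,\G_{\z_t}^{\z_{t+1}}\G_{\tz_t}^{\z_t}F(\tz_t)$ and following the two-step decomposition in \eqref{eq:egbest2} and \eqref{eq:egbest4}, I would write
\[
2\la\invexp_{\z_{t+1}}\z_t,\invexp_{\z_{t+1}}\z\ra=E_{\mathrm{hol}}+E_{\mathrm{jac}}+2\eta\la F(\tz_t),\invexp_{\tz_t}\z\ra,
\]
where
\[
E_{\mathrm{hol}}=2\eta\la\G_{\z_t}^{\z_{t+1}}\G_{\tz_t}^{\z_t}F(\tz_t)-\G_{\tz_t}^{\z_{t+1}}F(\tz_t),\invexp_{\z_{t+1}}\z\ra
\]
is the holonomy distortion, bounded via Lemma \ref{lem:holo} exactly as in \eqref{eq:egbest3}, and
\[
E_{\mathrm{jac}}=2\eta\la F(\tz_t),\G_{\z_{t+1}}^{\tz_t}\invexp_{\z_{t+1}}\z-\invexp_{\tz_t}\z\ra
\]
is a Jacobi-field type distortion, bounded via Lemma \ref{lem:hessian} as in \eqref{eq:egbest4}, using that the pertinent distances from $\tz_t$ and $\z_{t+1}$ to $\z$ are controlled by $D$ plus a small step-size correction.

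Now I would apply Lemma \ref{lem:eggrad1} to obtain $d(\tz_t,\z_{t+1})\leq 2L\eta^2\|F(\z_t)\|$ and Lemma \ref{lem:eggrad2} to swap $\|F(\z_t)\|$ for $\|F(\tz_t)\|$ up to a factor $(1-L\eta)^{-1}$. Both $E_{\mathrm{hol}}$ and $E_{\mathrm{jac}}$ then collapse to $O(\eta^3)\cdot\|F(\tz_t)\|^2$ with geometric prefactors involving $\sqrt{K_m}L$, $\bzt L$, $G$, and $D$, while the good term contributes $\sigma'\eta^2\|F(\tz_t)\|^2=\sigma' d(\z_t,\z_{t+1})^2$. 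The step-size threshold $\eta\leq\min\{1/\sqrt{8L^2+306K_mG^2},\,\bs/(56\sqrt{K_m}DL+8\bzt L+\bs L)\}$ is exactly calibrated so that $E_{\mathrm{hol}}+E_{\mathrm{jac}}\leq\sigma' d(\z_t,\z_{t+1})^2$, and the excess is absorbed into the negative cosine-law term, leaving the desired inequality.

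The main obstacle, as usual in this Riemannian setting, is the bookkeeping of the curvature constants: because $\z$ ranges over $\B(\z^*,D)$ rather than sitting at $\z^*$, the arguments of $\sigma(K,\cdot)$ and $\zeta(\kappa,\cdot)$ in \eqref{eq:egsigma1} and \eqref{eq:egzeta} must be enlarged to accommodate $d(\z_{t+1},\z)\leq d(\z_{t+1},\z^*)+D$. Verifying that the slack in the stated step-size is sufficient to still absorb the holonomy and Jacobi distortions, rather than having to shrink $\eta$ further, is the delicate numerical step; once this is checked, everything else is a direct transcription of the computation already carried out in the proof of Lemma \ref{lem:egbest}.
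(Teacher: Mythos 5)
Your proposal follows the same approach as the paper's own proof: apply Lemma \ref{lem:cos2} to the triangle with vertices $\z_t,\z_{t+1},\z$, substitute the REG update, decompose $2\la\invexp_{\z_{t+1}}\z_t,\invexp_{\z_{t+1}}\z\ra$ into the holonomy term (bounded by Lemma \ref{lem:holo} exactly as in \eqref{eq:egbest3}), the Jacobi-field distortion (bounded by Lemma \ref{lem:hessian} as in \eqref{eq:egbest4}), and the residual $2\eta\la F(\tz_t),\invexp_{\tz_t}\z\ra$, then use Lemmas \ref{lem:eggrad1}--\ref{lem:eggrad2} and the bound $d(\z_{t+1},\z)\leq 2D$ to convert everything to $O(\eta^3)\|F(\tz_t)\|^2$ and absorb it into the negative cosine-law term. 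The paper's proof is a direct transcription of the Lemma \ref{lem:egbest} computation with $\z^*$ replaced by $\z$, which is exactly what you describe, and you correctly observe that monotonicity plays no direct role here since the $\la F(\tz_t),\invexp_{\tz_t}\z\ra$ term is retained rather than dropped. One remark on the parenthetical caveat you raise: you are right to flag that the argument of $\sigma(K,\cdot)$ in the cosine law now involves $\min\{d(\z_t,\z),d(\z_{t+1},\z)\}$, which can be as large as about $2D$ rather than $\tfrac{91D}{81}$; when $K>0$ this means the relevant $\sigma$-factor may be strictly smaller than the $\bs$ used in the step-size bound (and could even turn negative if $2D>\tfrac{\pi}{2\sqrt{K}}$), so the claim that "the stated step-size absorbs" this is not self-evident. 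However, the paper glosses over this identically — it reuses $\bs$ verbatim without re-examining the argument of $\sigma$ — so this is an issue inherited from (and shared with) the paper's own proof rather than a flaw you introduce.
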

\begin{proof}
The proof is similar to that of Lemma \ref{lem:egbest}.
    Combining Equations \eqref{eq:egbest1}, \eqref{eq:egbest2}, \eqref{eq:egbest3} and \eqref{eq:egbest4} but replacing $\z^*$ with $\z$, we have
    \begin{equation}\label{eq:egprim}
        \begin{split}
            &d(\z_{t+1},\z)^2-d(\z_{t},\z)^2\leq 2\la\invexp_{\z_{t+1}}\z_{t},\invexp_{\z_{t+1}}\z\ra-\bs d(\z_{t},\z_{t+1})^2\\
    \leq& \eta^3\lt(\frac{144}{1152^{\frac{1}{4}}}\sqrt{K_m}\cdot d(\z_{t+1},\z)L+4\bzt L\rt)\|F(\z_{t})\|\cdot\|F(\tz_{t})\|-\bs\eta^2\|F(\tz_{t})\|^2+2\eta\la F(\tz_t),\invexp_{\tz_t}\z\ra\\
    \leq& \eta^3\lt(\frac{144}{1152^{\frac{1}{4}}}\sqrt{K_m}\cdot 2DL+4\bzt L\rt)\|F(\z_{t})\|\cdot\|F(\tz_{t})\|-\bs\eta^2\|F(\tz_{t})\|^2+2\eta\la F(\tz_t),\invexp_{\tz_t}\z\ra\\
    \leq& \frac{\eta^3}{1-L\eta}\lt(\frac{144}{1152^{\frac{1}{4}}}\sqrt{K_m}\cdot 2DL+4\bzt L\rt)\cdot\|F(\tz_{t})\|^2-\bs\eta^2\|F(\tz_{t})\|^2+2\eta\la F(\tz_t),\invexp_{\tz_t}\z\ra,\\
        \end{split}
    \end{equation}
    where the third inequality is due to Lemma \ref{lem:egbest} and $d(\z_{t+1},\z)\leq d(\z_{t+1},\z^*)+d(\z^*,\z)\leq 2D$, and the last inequality follows from Lemma \ref{lem:eggrad2}. Now, we can pick up $\eta$ to ensure 
    \[
    \frac{\eta^3}{1-L\eta}\lt(\frac{144}{1152^{\frac{1}{4}}}\sqrt{K_m}\cdot 2DL+4\bzt L\rt)\cdot\|F(\tz_{t})\|^2-\bs\eta^2\|F(\tz_{t})\|^2\leq 0,
    \]
    which means:
    \[
    \eta\leq\frac{\bs}{\frac{288}{1152^{\frac{1}{4}}}\sqrt{K_m}DL+4\bzt L+\bs L}\approx\frac{\bs}{49.43\sqrt{K_m}DL+4\bzt L+\bs L}.
    \]
    Therefore,
    \[
    \eta\leq\min\lt\{\frac{1}{\sqrt{8L^2+306K_mG^2}},\frac{\bs}{56\sqrt{K_m}DL+8\bzt L+\bs L} \rt\}\leq \frac{\bs}{\frac{288}{1152^{\frac{1}{4}}}\sqrt{K_m}DL+4\bzt L+\bs L},
    \]
    which concludes the proof.
\end{proof}
Now we are able to provide the proof of Theorem \ref{thm:reg}.
\begin{proof}
We denote $\z=\big(\begin{smallmatrix}
    \x\\ \y\end{smallmatrix}\big)$ and $\Z=\B(\z^*,D)$ for convenience. To show the last-iterate convergence, 
\[
\begin{split}
    &\max_{\y\in\Y}f(\x_T,\y)-\min_{\x\in\X}f(\x,\y_T)\leq\max_{\y\in\Y}\la\nabla_{\y}f(\x_T,\y_T),\invexp_{\y_T}\y\ra+\max_{\x\in\X}\la\nabla_{\x}f(\x_T,\y_T),-\invexp_{\x_T}\x\ra\\
    =&\max_{\z\in\Z}\la -F(\z_T),\invexp_{\z_T}\z\ra
    \leq \|F(\z_T)\|\cdot\max_{\z\in\Z}d(\z_T,\z)\leq \|F(\z_T)\|\cdot\lt(d(\z_T,\z^*)+\max_{\z\in\Z}d(\z^*,\z)\rt)\\
    \leq& \|F(\z_T)\|\cdot 2D=\mhl{O\lt(\frac{\bzt}{\sqrt{\bs^3T}}\rt)}.
\end{split}
\]
where the last equality is due to Theorem \ref{thm:eglast}. For the average-iterate convergence, 
\[
\begin{split}
    f(\bx_T,\y)-f(\x,\by_T)\stackrel{(1)}{\leq}&\frac{1}{T}\lt(\sumt f(\tx_t,\y)-\sumt f(\x,\ty_t)\rt)\\
    \stackrel{(2)}{\leq}&\frac{1}{T}\sumt\la-F(\tz_t),\invexp_{\tz_t}\z\ra\\
    \stackrel{(3)}{\leq}&\frac{1}{2\eta T}\sumt\lt(d(\z_t,\z)^2-d(\z_{t+1},\z)^2\rt)\\
    \stackrel{(4)}{\leq}&\frac{d(\z_0,\z)^2}{2\eta T}\leq \frac{(2D)^2}{2\eta T}=\frac{2D^2}{\eta T}=\mhl{O\lt(\frac{\bzt}{\bs T}\rt)},
\end{split}
\]
where the first inequality is due to a nested application of Jensen's inequality for gsc-convex functions:
\[
f(\bx_t,\y)\leq\frac{1}{t}f(\tx_t,\y)+\frac{t-1}{t}f(\bx_{t-1},\y),
\] the second is by the gsc-convexity, and the third comes from Lemma \ref{lem:egprimal}.

\end{proof}

\subsection{Challenge for Establishing the Last-iterate Convergence of RCEG}
\label{app:rceg}
We briefly touch upon our decision to omit a discussion on the last-iterate convergence of RCEG, as proposed by \citet{zhang2022minimax}. This variant introduces a correction term to ensure metric compatibility:
\[
    \begin{split}
        &\tz_{t}=\expmap_{\z_{t}}(-\eta F(\z_{t}))\\
        &\z_{t+1}=\expmap_{\tz_{t}}(-\eta F(\tz_{t})+\invexp_{\tz_{t}}(\z_{t})).
    \end{split}
    \]
A keen reader may wonder about the behavior of RCEG when we aim to derive an equivalent of Equation~\eqref{eq:peg}. For RCEG, crafting a counterpart to the first inequality in Equation~\eqref{eq:peg} appears challenging.

By Lemma \ref{lem:hessian}, we have
    \[
    \begin{split}
        0\leq&\la F(\z_t)-\G_{\z_{t+1}}^{\z_t}F(\z_{t+1}),-\invexp_{\z_t}\z_{t+1}\ra\\
        =&\la F(\z_t)-\G_{\z_{t+1}}^{\z_t}F(\z_{t+1}),\G_{\tz_t}^{\z_t}(\invexp_{\tz_t}\z_t-\invexp_{\tz_t}\z_{t+1})\ra\\
        &+\la F(\z_t)-\G_{\z_{t+1}}^{\z_t}F(\z_{t+1}),-\invexp_{\z_t}{\z_{t+1}}-(\G_{\tz_t}^{\z_t}(\invexp_{\tz_t}\z_t-\invexp_{\tz_t}\z_{t+1}))\ra\\
        \leq&\eta \la F(\z_t)-\G_{\z_{t+1}}^{\z_t}F(\z_{t+1}),\G_{\tz_t}^{\z_t}F(\tz_t)\ra +L\cdot \max\{\zeta(\k,\tau)-1,1-\sigma(K,\tau)\}\cdot d(\z_t,\z_{t+1})\cdot d(\z_t,\tz_t)\\
    \end{split}
    \]
    where $\tau=d(\z_t,\tz_t)+\min\lt\{d(\z_t,\z_{t+1}),d(\tz_t,\z_{t+1})\rt\}$.    
We observe that the distortion term, proportional to $d(\z_t,\z_{t+1})\cdot d(\z_t,\tz_t)$, poses challenges in establishing bounds. In contrast, for REG, the geometric distortion due to the holonomy effect can be handled  with the aid of Lemma \ref{lem:holo}. Hence, in this study, we predominantly focus on REG, demonstrating that it indeed achieves $O\lt(\frac{1}{\sqrt{T}}\rt)$ last-iterate convergence.
\section{Omitted Proof for Section \ref{sec:rpeg}}
\label{app:rpeg}
\subsection{Auxillary Lemmas on the Iterates of RPEG}
Lemma \ref{lem:peggrad3} confirms that 
$d(\tz_t,\z_{t+1})=O(\eta^2)$, whereas Lemmas \ref{lem:peggrad1} and \ref{lem:peggrad2} delineate the relationship between $\|F(\z_t)\|$, $\|F(\tz_t)\|$ and $\|F(\tz_{t+1})\|$. Lemma \ref{lem:pegtraj} demonstrates that if  $\z_t$ is bounded, then $\z_{t+1},\tz_t,\tz_{t-1}$ are also bounded.
\begin{lemma}\label{lem:peggrad3}
    For the iterates of RPEG as in Equation \eqref{eq:rpeg}
     with $\eta\leq\frac{1}{G\sqrt{K_m}}$, we have
    \[
    d(\tz_t,\z_{t+1})\leq 2L\eta^2(2\|F(\tz_{t-1})\|+\|F(\tz_{t-2})\|).
    \]
\end{lemma}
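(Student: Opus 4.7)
The plan is to reduce the geodesic distance $d(\tz_t,\z_{t+1})$ to a tangent-space computation at $\z_t$, since both $\tz_t$ and $\z_{t+1}$ are defined as exponentials from $\z_t$. First I would invoke Lemma \ref{lem:trig} (the analog of the Euclidean trigonometric comparison, already used in the proof of Lemma \ref{lem:eggrad1}) to get
\[
d(\tz_t,\z_{t+1}) \;\leq\; 2\,\|\invexp_{\z_t}\tz_t - \invexp_{\z_t}\z_{t+1}\|,
\]
which using the RPEG update \eqref{eq:rpeg} equals $2\eta\,\|\G_{\tz_{t-1}}^{\z_t}F(\tz_{t-1})-\G_{\tz_t}^{\z_t}F(\tz_t)\|$. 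The hypothesis $\eta\leq \tfrac{1}{G\sqrt{K_m}}$ together with Assumption \ref{ass:grad} guarantees the distances $d(\z_t,\tz_t)$ and $d(\z_t,\z_{t+1})$ are at most $1/\sqrt{K_m}$, which is precisely what Lemma \ref{lem:trig} needs.

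The central observation is that the two tangent vectors we are comparing already live in the same tangent space $T_{\z_t}\M$, so we never actually need a holonomy estimate here. Instead I would insert $\pm F(\z_t)$ and apply the triangle inequality to split the difference as
\[
\|\G_{\tz_{t-1}}^{\z_t}F(\tz_{t-1})-F(\z_t)\| + \|F(\z_t)-\G_{\tz_t}^{\z_t}F(\tz_t)\|.
\]
The Lipschitz Assumption \ref{ass:lips2} bounds these by $L\,d(\z_t,\tz_{t-1})$ and $L\,d(\z_t,\tz_t)$, respectively. The second distance is exactly $\eta\|F(\tz_{t-1})\|$ by the first line of the RPEG update. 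For the first distance, the natural move is a plain geodesic triangle inequality:
\[
d(\z_t,\tz_{t-1}) \;\leq\; d(\z_t,\z_{t-1}) + d(\z_{t-1},\tz_{t-1}) \;=\; \eta\|F(\tz_{t-1})\| + \eta\|F(\tz_{t-2})\|,
\]
where the equality again reads off directly from \eqref{eq:rpeg} applied at step $t-1$.

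Assembling these, the two Lipschitz terms add up to $L\eta\bigl(2\|F(\tz_{t-1})\|+\|F(\tz_{t-2})\|\bigr)$, and the overall factor of $2\eta$ from the Lemma \ref{lem:trig} step gives precisely the claimed bound $2L\eta^2\bigl(2\|F(\tz_{t-1})\|+\|F(\tz_{t-2})\|\bigr)$. I do not expect a serious obstacle here; the only delicate point is resisting the temptation to introduce a holonomy correction. The cleaner route is to keep everything anchored at $\z_t$, using $F(\z_t)$ as a hub and invoking Lipschitzness twice, which sidesteps Lemma \ref{lem:holo2} entirely and yields the $O(\eta^2)$ scaling (in contrast with the looser $O(\eta)$ bound one would get from a direct triangle inequality on $\tz_t$ and $\z_{t+1}$).
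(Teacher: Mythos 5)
Your proposal is correct and matches the paper's proof step for step: both apply Lemma \ref{lem:trig} to reduce to the tangent space at $\z_t$, insert $F(\z_t)$ and use Lipschitzness twice, and then bound $d(\z_t,\tz_{t-1})$ by the triangle inequality through $\z_{t-1}$. Your remark about keeping everything anchored at $\z_t$ so as to avoid any holonomy correction is precisely the mechanism the paper exploits here.
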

\begin{proof}
    We begin with Lemma \ref{lem:trig},
    \[
    \begin{split}
        &d(\tz_t,\z_{t+1})\leq 2\|\invexp_{\z_t}\tz_t-\invexp_{\z_t}\z_{t+1}\|=2\eta\|\G_{\tz_{t-1}}^{\z_t}F(\tz_{t-1})-\G_{\tz_t}^{\z_t}F(\tz_t)\|\\
        =&2\eta\|\G_{\tz_{t-1}}^{\z_t}F(\tz_{t-1})-F(\z_t)+F(\z_t)-\G_{\tz_t}^{\z_t}F(\tz_t)\|\\
        \leq&2\eta L\cdot d(\z_t,\tz_{t-1})+2\eta L d(\z_t,\tz_t)\\
        \leq &2\eta L(d(\z_t,\z_{t-1})+d(\tz_{t-1},\z_{t-1})+d(\z_t,\tz_t))\\
        =&2\eta^2L(\|F(\tz_{t-1})\|+\|F(\tz_{t-2})\|+\|F(\tz_{t-1})\|)\\
        =&2\eta^2L(2\|F(\tz_{t-1})\|+\|F(\tz_{t-2})\|).
    \end{split}
    \]
\end{proof}
\begin{lemma}\citep{chavdarova2021last}\label{lem:peggrad1}
    Suppose $\eta\leq\frac{1}{8L}$, then 
    \[
    \frac{1}{2}\leq\frac{\|F(\tz_{t+1})\|}{\|F(\tz_{t})\|}\leq\frac{3}{2}
    \]
    holds for RPEG in Equation \eqref{eq:rpeg}.
\end{lemma}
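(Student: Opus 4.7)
\textbf{Proof proposal for Lemma \ref{lem:peggrad1}.} The plan is to induct on $t$, using the Riemannian Lipschitz condition in Assumption~\ref{ass:lips2} together with the fact that parallel transport preserves norms. The key observation is that it suffices to compare $F(\tz_{t+1})$ with a parallel-transported copy of $F(\tz_t)$, and the distance $d(\tz_{t+1},\tz_t)$ can be controlled by a triangle inequality along the RPEG trajectory.

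More concretely, first I would write
\[
d(\tz_{t+1},\tz_t)\le d(\tz_{t+1},\z_{t+1})+d(\z_{t+1},\z_t)+d(\z_t,\tz_t),
\]
and then read off from the update rule \eqref{eq:rpeg} that the three summands equal $\eta\|F(\tz_t)\|$, $\eta\|F(\tz_t)\|$ and $\eta\|F(\tz_{t-1})\|$ respectively. By Assumption~\ref{ass:lips2} applied to the two points $\tz_{t+1}$ and $\tz_t$,
\[
\bigl\|F(\tz_{t+1})-\G_{\tz_t}^{\tz_{t+1}}F(\tz_t)\bigr\|\le L\cdot d(\tz_{t+1},\tz_t)\le L\eta\bigl(2\|F(\tz_t)\|+\|F(\tz_{t-1})\|\bigr).
\]
Since parallel transport is an isometry of tangent spaces, the triangle inequality in $T_{\tz_{t+1}}\M$ gives both
\[
\|F(\tz_{t+1})\|\le \|F(\tz_t)\|+L\eta\bigl(2\|F(\tz_t)\|+\|F(\tz_{t-1})\|\bigr),\qquad \|F(\tz_{t+1})\|\ge \|F(\tz_t)\|-L\eta\bigl(2\|F(\tz_t)\|+\|F(\tz_{t-1})\|\bigr).
\]

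The last remaining issue is the $\|F(\tz_{t-1})\|$ on the right-hand side, which is handled by an inductive argument. Assume $\|F(\tz_{t-1})\|\le 2\|F(\tz_t)\|$ (the content of the lemma one step earlier); substituting yields
\[
(1-4L\eta)\|F(\tz_t)\|\le \|F(\tz_{t+1})\|\le(1+4L\eta)\|F(\tz_t)\|,
\]
and plugging in $\eta\le \tfrac{1}{8L}$ gives exactly $\tfrac12\|F(\tz_t)\|\le\|F(\tz_{t+1})\|\le\tfrac32\|F(\tz_t)\|$, closing the induction. The base case $t=0$ can be handled by adopting the convention $\tz_{-1}=\tz_0$ (so that $d(\tz_0,\z_0)=0$), after which the same one-step bound with only $\|F(\tz_0)\|$ on the right goes through under $\eta\le \tfrac{1}{8L}$.

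The main (only) subtlety compared to the Euclidean proof of \citet{chavdarova2021last} is that $F(\tz_{t+1})$ and $F(\tz_t)$ live in different tangent spaces, so one must introduce the parallel transport $\G_{\tz_t}^{\tz_{t+1}}$ before applying the triangle inequality; fortunately, because this parallel transport is an isometry, no holonomy/curvature constants enter and the Euclidean-style argument carries over essentially verbatim. I expect no serious obstacle beyond making sure that the induction is set up cleanly at $t=0$ and that the step-size condition $\eta\le 1/(8L)$ is indeed what the arithmetic $1-4L\eta\ge 1/2$ demands.
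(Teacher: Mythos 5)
Your argument is correct and is exactly the "straightforward extension" the paper alludes to when it says the Euclidean proof of \citet{chavdarova2021last} carries over because it "hinges on the triangle inequality and the Lipschitz continuity of $F$": you decompose $d(\tz_{t+1},\tz_t)$ along the trajectory, read off $\eta\|F(\tz_t)\|$, $\eta\|F(\tz_t)\|$, $\eta\|F(\tz_{t-1})\|$ from the update rule, apply Assumption~\ref{ass:lips2} with a parallel transport (which is norm-preserving, so no curvature constants enter), and close the two-sided bound by induction on the lower bound from the previous step. The only minor blemish is the remark that the convention $\tz_{-1}=\tz_0$ forces $d(\tz_0,\z_0)=0$ — in fact it forces $d(\tz_0,\z_0)=\eta\|F(\tz_0)\|$, so the base-case bound is $d(\tz_1,\tz_0)\le 3\eta\|F(\tz_0)\|$ rather than $2\eta\|F(\tz_0)\|$ — but since $3L\eta\le 3/8<1/2$ under $\eta\le 1/(8L)$, the base case still closes and the lemma holds; this does not affect the correctness of the proof.
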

In \citet{chavdarova2021last}, Lemma \ref{lem:peggrad1} has been established for PEG. However, given that the primary argument hinges on the triangle inequality and the Lipschitz continuity of 
$F$, extending the proof to the manifold setting is straightforward.

\begin{lemma}\label{lem:peggrad2}
    Suppose $\eta\leq\frac{1}{8L}$, for the iterates of RPEG as in Equation \eqref{eq:rpeg},
    we have
    \[    
    (1-2L\eta)\|F(\tz_{t})\|\leq\|F(\z_{t})\|\leq(1+2L\eta)\|F(\tz_{t})\|.
    \]
\end{lemma}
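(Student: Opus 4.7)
The plan is to relate $\|F(\z_t)\|$ and $\|F(\tz_t)\|$ through the Lipschitz property of $F$, applied to the displacement $d(\z_t,\tz_t)$, and then to eliminate the resulting auxiliary term using Lemma \ref{lem:peggrad1}. Concretely, from the RPEG update in Equation \eqref{eq:rpeg}, $\tz_t = \expmap_{\z_t}(-\eta \G_{\tz_{t-1}}^{\z_t} F(\tz_{t-1}))$, so $d(\z_t,\tz_t) = \eta \|F(\tz_{t-1})\|$, since parallel transport preserves norms.

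Next, I would invoke Assumption \ref{ass:lips2} to write
\[
\bigl\| F(\z_t) - \G_{\tz_t}^{\z_t} F(\tz_t) \bigr\| \;\leq\; L \cdot d(\z_t,\tz_t) \;=\; L\eta\,\|F(\tz_{t-1})\|.
\]
Combined with the reverse triangle inequality (and the fact that parallel transport preserves norm, so $\|\G_{\tz_t}^{\z_t} F(\tz_t)\| = \|F(\tz_t)\|$), this immediately gives
\[
\bigl|\,\|F(\z_t)\| - \|F(\tz_t)\|\,\bigr| \;\leq\; L\eta\,\|F(\tz_{t-1})\|.
\]

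To finish, I want to replace $\|F(\tz_{t-1})\|$ with a multiple of $\|F(\tz_t)\|$. For this I apply Lemma \ref{lem:peggrad1} (valid since $\eta \leq \tfrac{1}{8L}$): the inequality $\tfrac{\|F(\tz_t)\|}{\|F(\tz_{t-1})\|} \geq \tfrac{1}{2}$ yields $\|F(\tz_{t-1})\| \leq 2\|F(\tz_t)\|$. Plugging this in gives both desired inequalities
\[
(1 - 2L\eta)\|F(\tz_t)\| \leq \|F(\z_t)\| \leq (1 + 2L\eta)\|F(\tz_t)\|.
\]

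There is no real obstacle here; the argument is a direct triangle-inequality/Lipschitz calculation. The only subtlety is to notice that one must use the update formula for $\tz_t$ (not $\z_{t+1}$) to express $d(\z_t,\tz_t)$ cleanly as $\eta\|F(\tz_{t-1})\|$, and to recognize that the ``off-by-one'' term $\|F(\tz_{t-1})\|$ that appears is precisely what Lemma \ref{lem:peggrad1} was set up to handle.
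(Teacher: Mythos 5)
Your proof is correct and follows essentially the same route as the paper's: bound $\|F(\z_t) - \G_{\tz_t}^{\z_t}F(\tz_t)\|$ by $L\eta\|F(\tz_{t-1})\|$ via Lipschitzness, then replace $\|F(\tz_{t-1})\|$ by $2\|F(\tz_t)\|$ using Lemma \ref{lem:peggrad1}. The only cosmetic difference is that you package both directions via the reverse triangle inequality rather than writing out the two one-sided estimates separately as the paper does.
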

\begin{proof}
    The proof is immediate by the triangle inequality, Lemma \ref{lem:peggrad1} and Assumption \ref{ass:lips2}. First, we have
    \[
    \begin{split}
        \|F(\z_t)\|\leq&\|F(\tz_t)\|+\|F(\z_t)-\G_{\tz_t}^{\z_t}F(\tz_t)\|\\
        =&\|F(\tz_t)\|+L\cdot d(\z_t,\tz_t)=\|F(\tz_t)\|+L\eta\|F(\tz_{t-1})\|\\
        \leq&(1+2\eta L)\|F(\tz_t)\|.        
    \end{split}
    \]
    Next, we demonstrate that:
    \[
    \begin{split}
        \|F(\z_t)\|\geq&\|F(\tz_t)\|-\|F(\z_t)-\G_{\tz_t}^{\z_t}F(\tz_t)\|\\
        =&\|F(\tz_t)\|-L\cdot d(\z_t,\tz_t)=\|F(\tz_t)\|-L\eta\|F(\tz_{t-1})\|\\
        \geq&(1-2\eta L)\|F(\tz_t)\|.        
    \end{split}
    \]
\end{proof}
\begin{lemma}\label{lem:pegtraj}
    For the iterates of RPEG as in Equation \eqref{eq:rpeg}, with step-size $\eta\leq\frac{1}{32L}$, assume $d(\z_t,\z^*)\leq D$, Assumptions \ref{ass:rm} and \ref{ass:lips2} holds. Then we have
    \[
    \begin{split}
        &d(\z_{t+1},\z^*)\leq\frac{31D}{30}\\
        &d(\tz_t,\z^*)\leq\frac{16D}{15}\\
        &d(\tz_{t-1},\z^*)\leq\frac{6D}{5}.
    \end{split}
    \]
    We can also obtain $\z_t,\z_{t+1},\tz_t,\tz_{t-1}\in\D$ holds where $\D$ is defined in Definition \ref{def:def}.
\end{lemma}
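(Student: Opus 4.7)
The plan is to mirror the structure of the REG analog (Lemma~\ref{lem:egtraj}), combining the RPEG update rule with triangle inequalities and the Lipschitzness of $F$. The crucial observation is that since $F(\z^*)=0$ and $F$ is $L$-Lipschitz (Assumption~\ref{ass:lips2}), the hypothesis $d(\z_t,\z^*)\leq D$ immediately yields $\|F(\z_t)\|\leq L\cdot d(\z_t,\z^*)\leq LD$. This is the sole input we need about the current iterate; everything else propagates from it via the auxiliary lemmas already established.

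First I would upgrade this Euclidean-style norm bound from $\z_t$ to $\tz_t$ using Lemma~\ref{lem:peggrad2}: with $\eta\leq\tfrac{1}{32L}$ we have $2L\eta\leq\tfrac{1}{16}$, so
\[
\|F(\tz_t)\|\leq\tfrac{1}{1-2L\eta}\|F(\z_t)\|\leq\tfrac{16}{15}LD.
\]
Then I would step backward in time with Lemma~\ref{lem:peggrad1}, which controls $\|F(\tz_{s+1})\|/\|F(\tz_s)\|$ between $1/2$ and $3/2$. In particular $\|F(\tz_{t-1})\|\leq 2\|F(\tz_t)\|\leq\tfrac{32}{15}LD$ and $\|F(\tz_{t-2})\|\leq 4\|F(\tz_t)\|\leq\tfrac{64}{15}LD$. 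These are the three norm estimates I will plug into the triangle inequalities below.

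Next, I would directly apply the RPEG update formulas. For $\z_{t+1}$,
\[
d(\z_{t+1},\z^*)\leq d(\z_t,\z^*)+d(\z_t,\z_{t+1})=d(\z_t,\z^*)+\eta\|F(\tz_t)\|\leq D+\tfrac{1}{32L}\cdot\tfrac{16LD}{15}=\tfrac{31D}{30}.
\]
For $\tz_t$,
\[
d(\tz_t,\z^*)\leq d(\z_t,\z^*)+\eta\|F(\tz_{t-1})\|\leq D+\tfrac{1}{32L}\cdot\tfrac{32LD}{15}=\tfrac{16D}{15}.
\]
The last bullet is the only subtle one, because $\tz_{t-1}$ is anchored at $\z_{t-1}$, not $\z_t$. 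I would therefore route through $\z_t$ using
\[
d(\tz_{t-1},\z_t)\leq d(\tz_{t-1},\z_{t-1})+d(\z_{t-1},\z_t)=\eta\|F(\tz_{t-2})\|+\eta\|F(\tz_{t-1})\|,
\]
which avoids assuming separately that $d(\z_{t-1},\z^*)\leq D$; summing gives $d(\tz_{t-1},\z_t)\leq\tfrac{1}{32L}\bigl(\tfrac{64LD}{15}+\tfrac{32LD}{15}\bigr)=\tfrac{D}{5}$, hence $d(\tz_{t-1},\z^*)\leq D+\tfrac{D}{5}=\tfrac{6D}{5}$.

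Finally, since $\tfrac{31D}{30}$, $\tfrac{16D}{15}$, $D$, and $\tfrac{6D}{5}$ are all $\leq\tfrac{6D}{5}$, the set membership $\z_t,\z_{t+1},\tz_t,\tz_{t-1}\in\D$ follows immediately from Definition~\ref{def:def}. The main obstacle is really only a bookkeeping one: verifying that the backward-in-time application of Lemma~\ref{lem:peggrad1} (which buys a factor of $2$ per step) together with the RPEG lag does not overwhelm the $\tfrac{1}{32L}$ step-size budget. The factor $32$ in $\eta\leq\tfrac{1}{32L}$ is chosen precisely to leave enough slack for the $\tz_{t-1}$ bound, where we pay up to $4\|F(\tz_t)\|$ plus $2\|F(\tz_t)\|$ worth of displacement; any smaller denominator would push $d(\tz_{t-1},\z^*)$ past $\tfrac{6D}{5}$.
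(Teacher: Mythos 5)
Your proof is correct and follows essentially the same route as the paper's: bound $\|F(\z_t)\|\leq LD$ via $F(\z^*)=0$ and Lipschitzness, propagate to $\|F(\tz_t)\|,\|F(\tz_{t-1})\|,\|F(\tz_{t-2})\|$ via Lemmas~\ref{lem:peggrad2} and~\ref{lem:peggrad1}, then apply the triangle inequality with the RPEG displacement formulas. The only cosmetic difference is that you group the triangle-inequality chain for $\tz_{t-1}$ through $\z_t$ rather than through $\z_{t-1}$, but both expansions use the identical three terms $d(\tz_{t-1},\z_{t-1})$, $d(\z_{t-1},\z_t)$, $d(\z_t,\z^*)$ and arrive at the same $\tfrac{6D}{5}$ bound.
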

\begin{proof}
    This lemma can be proved via a combination of the triangle inequality, Lemma \ref{lem:peggrad1}, Lemma \ref{lem:peggrad2} and Assumption \ref{ass:lips2}. For $d(\z_{t+1},\z^*)$:
    \[
    \begin{split}
        &d(\z_{t+1},\z^*)\leq d(\z_t,\z^*)+d(\z_{t+1},\z_t)\\
        \leq& D+\eta\|F(\tz_t)\|\leq D+\frac{\eta}{1-2L\eta}\|F(\z_t)\|\\
        \leq &D+\frac{\eta}{1-2L\eta}DL\leq D+\frac{1}{30}D=\frac{31D}{30}.
    \end{split}
    \]   
    We can bound $d(\tz_t,\z^*)$ in a similar way:
    \[
    \begin{split}
        &d(\tz_t,\z^*)\leq d(\z_t,\z^*)+d(\tz_t,\z_t)\\
        \leq& D+\eta\|F(\tz_{t-1})\|\\
        \leq& D+2\eta \|F(\tz_t)\|\\
        \leq&D+\frac{2}{30}D=\frac{16D}{15}.
    \end{split}
    \]
    The case of $d(\tz_{t-1},\z^*)$ is slightly more involved. First, by the triangle inequality,
    \[
    d(\tz_{t-1},\z^*)\leq d(\tz_{t-1},\z_{t-1})+d(\z_{t-1},\z^*).
    \]
    We bound both terms individually as:
    \[
    \begin{split}
        &d(\z_{t-1},\z^*)\leq d(\z_t,\z_{t-1})+d(\z_t,\z^*)\leq \eta\|F(\tz_{t-1})\|+D\leq\frac{16D}{15}\\
        &d(\tz_{t-1},\z_{t-1})=\eta\|F(\tz_{t-2})\|\leq 4\eta\|F(\tz_t)\|\leq\frac{4\eta}{1-2L\eta}DL\leq\frac{2D}{15}.
    \end{split}
    \]
    Thus,
    \[
    d(\tz_{t-1},\z^*)\leq d(\tz_{t-1},\z_{t-1})+d(\z_{t-1},\z^*)\leq\frac{2D}{15}+\frac{16D}{15}=\frac{6D}{5}.
    \]
\end{proof}

\subsection{Proof of Lemma \ref{lem:peg}}
\begin{proof}
We again prove the lemma by induction. The case of $t=0$ is obvious. Now, we assume $d(\z_t,\z^*)\leq D$ and we intend to show $d(\z_{t+1},\z^*)\leq D$. We define $c\coloneqq\frac{1}{6\sqrt{2}}$ for convenience.

By Lemma \ref{lem:cos2},
\begin{equation}
    d(\z_{t+1},\z^*)^2-d(\z_{t},\z^*)^2\leq 2\la\invexp_{\z_{t+1}}\z_{t},\invexp_{\z_{t+1}}\z^*\ra-\sigma(K,d(\z_t,\z_{t+1})+\min\lt\{d(\z_t,\z^*),d(\z_{t+1},\z^*)\rt\}) \cdot d(\z_{t},\z_{t+1})^2.
\end{equation}
Based on the monotonicity of $\sigma(K,\cdot)$ and $\eta\leq \frac{\bs}{32\bzt L}\leq\frac{1}{32L}$, we have
\begin{equation}\label{eq:pegsigma1}
    \begin{split}
        &-\sigma(K,d(\z_t,\z_{t+1})+\min\lt\{d(\z_t,\z^*),d(\z_{t+1},\z^*)\rt\})\\
        \leq&-\sigma(K,d(\z_t,\z^*)+d(\z_t,\z_{t+1}))\\
        \leq&-\sigma\lt(K,\frac{31D}{30}\rt)\leq-\bs,\\
    \end{split}
\end{equation}
where $d(\z_t,\z^*)+d(\z_t,\z_{t+1})\leq\frac{31D}{30}$ follows from the proof of Lemma \ref{lem:pegtraj}.
Thus, we can deduce
\begin{equation}\label{eq:pegbest1}
    d(\z_{t+1},\z^*)^2-d(\z_{t},\z^*)^2\leq 2\la\invexp_{\z_{t+1}}\z_{t},\invexp_{\z_{t+1}}\z^*\ra-\bs \cdot d(\z_{t},\z_{t+1})^2.
\end{equation}
We can decompose the term $2\la\invexp_{\z_{t+1}}\z_t,\invexp_{\z_{t+1}}\z^*\ra$ as:
\begin{equation}\label{eq:peg1}
    \begin{split}     &2\la\invexp_{\z_{t+1}}\z_t,\invexp_{\z_{t+1}}\z^*\ra=2\eta\la\G_{\z_t}^{\z_{t+1}}\G_{\tz_t}^{\z_t}F(\tz_t),\invexp_{\z_{t+1}}\z^*\ra\\       =&2\eta\la\G_{\z_t}^{\z_{t+1}}\G_{\tz_t}^{\z_t}F(\tz_t)-\G_{\tz_t}^{\z_{t+1}}F(\tz_t),\invexp_{\z_{t+1}}\z^*\ra+2\eta\la\G_{\tz_t}^{\z_{t+1}}F(\tz_t),\invexp_{\z_{t+1}}\z^*\ra\\ 
    \end{split}
\end{equation}
The first term on the RHS of Equation \eqref{eq:peg1} corresponds to the holonomy effect and can be bounded by Lemma \ref{lem:holo}. To this end, we first compute
\[
    \begin{split}
        &\min\{d(\z_t,\z_{t+1})+d(\z_{t+1},\tz_t),d(\z_t,\tz_t)+d(\z_{t+1},\tz_t)\}\\
        \leq&d(\z_t,\z_{t+1})+d(\z_{t+1},\tz_t)\leq 2d(\z_t,\z_{t+1})+d(\z_t,\tz_t)=2\eta\|F(\tz_t)\|+\eta\|F(\tz_{t-1})\|\leq 3\eta G\\
        \leq&3G\cdot \frac{1}{\sqrt{648K_mG^2}}=\frac{1}{6\sqrt{2}\cdot\sqrt{K_m}}=\frac{c}{\sqrt{K_m}}.
    \end{split}
    \]
    where $2\eta\|F(\tz_t)\|+\eta\|F(\tz_{t-1})\|\leq 3\eta G$ is due to Lemma \ref{lem:pegtraj} and Assumption \ref{ass:grad}.
    
Now we have
    \begin{equation}\label{eq:peg2}
        \begin{split}           &2\la\invexp_{\z_{t+1}}\z_t,\invexp_{\z_{t+1}}\z^*\ra\\
        \stackrel{(1)}{\leq}& 36c\sqrt{K_m}\cdot\|F(\tz_t)\|\cdot d(\tz_t,\z_{t+1})\cdot 2\eta d(\z^*,\z_{t+1})+2\eta\la\G_{\tz_t}^{\z_{t+1}}F(\tz_t),\invexp_{\z_{t+1}}\z^*\ra\\
        \stackrel{(2)}{\leq}&144cL\frac{31D}{30}\sqrt{K_m}\eta^3\|F(\tz_t)\|\cdot(2\|F(\tz_{t-1})\|+\|F(\tz_{t-2})\|)+2\eta\la\G_{\tz_t}^{\z_{t+1}}F(\tz_t),\invexp_{\z_{t+1}}\z^*\ra,\\
        \end{split}
    \end{equation}
    where the first inequality follows from Lemma \ref{lem:holo} and the second is a result of Lemma \ref{lem:peggrad3} and Lemma \ref{lem:pegtraj}.

    \[
    \begin{split}
        &\min\{d(\z_t,\z_{t+1})+d(\z_{t+1},\tz_t),d(\z_t,\tz_t)+d(\z_{t+1},\tz_t)\}\\
        \leq&d(\z_t,\z_{t+1})+d(\z_{t+1},\tz_t)\leq 2d(\z_t,\z_{t+1})+d(\z_t,\tz_t)=2\eta\|F(\tz_t)\|+\eta\|F(\tz_{t-1})\|\leq 3\eta G\\
        \leq&3G\cdot \frac{1}{\sqrt{648K_mG^2}}=\frac{1}{6\sqrt{2}\cdot\sqrt{K_m}}=\frac{c}{\sqrt{K_m}}.
    \end{split}
    \]
    while the second is a result of Lemma \ref{lem:peggrad3}.
    We  also achieve an upper bound on $2\eta\la\G_{\tz_t}^{\z_{t+1}}F(\tz_t),\invexp_{\z_{t+1}}\z^*\ra$ as follows:
    \begin{equation}\label{eq:peg3}
        \begin{split}            &2\eta\la\G_{\tz_t}^{\z_{t+1}}F(\tz_t),\invexp_{\z_{t+1}}\z^*\ra=2\eta\la F(\tz_t),\G_{\z_{t+1}}^{\tz_t}\invexp_{\z_{t+1}}\z^*\ra\\
        =&2\eta\la F(\tz_t),\G_{\z_{t+1}}^{\tz_t}\invexp_{\z_{t+1}}\z^*-\invexp_{\tz_t}\z^*\ra+2\eta\la F(\tz_t),\invexp_{\tz_t}\z^*\ra\\
        \leq &2\eta\bzt \|F(\tz_t)\|\cdot d(\tz_t,\z_{t+1})\\
        \leq&4\bzt L\eta^3\|F(\tz_t)\|\cdot (2\|F(\tz_{t-1})\|+\|F(\tz_{t-2})\|),
        \end{split}
    \end{equation}
    where the inequality is due to Lemma \ref{lem:hessian} and Assumption \ref{ass:mono2}, while the last equality is due to Lemma \ref{lem:peggrad3}. The correctness of the geometric distortion $\bzt$ can be verified in an analog way as Equation \eqref{eq:egzeta}. More specifically, 
    \[
    \begin{split}
        &d(\tz_t,\z_{t+1})+\min\lt\{d(\tz_t,\z^*),d(\z_{t+1},\z^*)\rt\}\\
        \leq& d(\tz_t,\z_{t+1})+\frac{31D}{30}\\
        \leq& d(\tz_t,\z_t)+d(\z_t,\z_{t+1})+\frac{31D}{30}\\
        =&\eta\|F(\tz_{t-1})\|+\eta\|F(\tz_t)\|+\frac{31D}{30}\\
        \leq& 3\eta\|F(\tz_t)\|+\frac{31D}{30}\\
        \leq& \frac{3\eta}{1-2L\eta}\|F(\z_t)\|\leq \frac{3\eta}{1-2\eta L}\cdot DL+D\leq \frac{11D}{10}\leq \frac{7D}{5}.
    \end{split}
    \]

    Combining Equations \eqref{eq:pegbest1}, \eqref{eq:peg2} and \eqref{eq:peg3}, we have
    \begin{equation}\label{eq:peg4}
        \begin{split} 
        &d(\z_{t+1},\z^*)^2-d(\z_{t},\z^*)^2\leq 2\la\invexp_{\z_{t+1}}\z_{t},\invexp_{\z_{t+1}}\z^*\ra-\bs \cdot d(\z_{t},\z_{t+1})^2\\
            \leq&(144cLD\sqrt{K_m}+4\bzt L)\eta^3\cdot \|F(\tz_t)\|\cdot(2\|F(\tz_{t-1})\|+\|F(\tz_{t-2})\|)-\bs\eta^2\|F(\tz_t)\|^2\\
            \leq& 8\lt(144cL\frac{31D}{30}\sqrt{K_m}+4\bzt L\rt)\eta^3\|F(\tz_t)\|^2-\bs\eta^2\|F(\tz_t)\|^2,
        \end{split}
    \end{equation}
    where the third inequality follows from Lemma \ref{lem:peggrad1}.  Remind that $c=\frac{1}{6\sqrt{2}}$, we can guarantee the RHS of Equation \eqref{eq:peg4} to be non-positive by choosing
    \[
    \eta\leq\frac{\bs}{141LD\sqrt{K_m}+32\bzt L}.
    \]
    Since
    \[
    \eta\leq\min\lt\{\frac{\bs}{141LD\sqrt{K_m}+32\bzt L}, \frac{1}{\sqrt{648K_mG^2}}\rt\},
    \]
    already satisfies this requirement, by induction, we know $d(\z_t,\z^*)\leq D$ holds for $t\geq 0$.   
\end{proof}
\subsection{Proof of Lemma \ref{lem:pegnorm}}
\begin{proof}
First, we note that the step-size $\eta$ already satisfies the requirement of Lemma \ref{lem:egbest}, so we know $\z_t,\z_{t+1},\tz_t,\tz_{t-1}\in\D$ holds for any $t\geq 0$ by combining Lemmas \ref{lem:egtraj} and \ref{lem:egbest}. This is important, because Assumptions \ref{ass:mono2} and \ref{ass:grad} only hold on $\D$.
    By Assumption \ref{ass:mono2}, we have
    \[
    \begin{split}
        0\leq&\la\G_{\z_{t+1}}^{\z_{t}}F(\z_{t+1})-F(\z_{t}),\invexp_{\z_{t}}\z_{t+1}\ra\\
        =&-\eta\la\G_{\z_{t+1}}^{\z_{t}}F(\z_{t+1})-F(\z_{t}),\G_{\tz_{t}}^{\z_{t}}F(\tz_{t})\ra,
    \end{split}
    \]
    which is an analog of the first inequality in Equation \eqref{eq:pegeq0}.
    To show the second inequality, by Assumption \ref{ass:lips2},
    \[
    \|\G_{\z_{t+1}}^{\tz_{t}}F(\z_{t+1})-F(\tz_{t})\|^2\leq L^2 d(\z_{t+1},\tz_{t})^2,
    \]
    and our goal is 
    \[
    \|\G_{\z_{t+1}}^{\z_{t}}F(\z_{t+1})-\G_{\tz_{t}}^{\z_{t}}F(\tz_{t})\|^2\leq O(1)\cdot d(\z_{t+1},\tz_{t})^2.
    \]
    First, we have
    \begin{equation}\label{eq:normpeg1}
        \begin{split}
            &\|\G_{\tz_{t}}^{\z_{t}}F(\tz_{t})-\G_{\z_{t+1}}^{\z_{t}}F(\z_{t+1})\|^2\\
        =&\|\G_{\tz_{t}}^{\z_{t}}F(\tz_{t})-\G_{\tz_{t}}^{\z_{t}}\G_{\z_{t+1}}^{\tz_{t}}F(\z_{t+1})+\G_{\tz_{t}}^{\z_{t}}\G_{\z_{t+1}}^{\tz_{t}}F(\z_{t+1})-\G_{\z_{t+1}}^{\z_{t}}F(\z_{t+1})\|^2\\
        {\leq} &2\|\G_{\tz_{t}}^{\z_{t}}F(\tz_{t})-\G_{\tz_{t}}^{\z_{t}}\G_{\z_{t+1}}^{\tz_{t}}F(\z_{t+1})\|^2+2\|\G_{\z_{t}}^{\z_{t+1}}\G_{\tz_{t}}^{\z_{t}}\G_{\z_{t+1}}^{\tz_{t}}F(\z_{t+1})-F(\z_{t+1})\|^2\\
        \end{split}
    \end{equation}
    where the inequality follows from $\|\a+\b\|^2\leq 2(\|\a\|^2+\|\b\|^2)$.
Since
\[
\begin{split}
   &\min\{d(\z_t,\z_{t+1})+d(\z_{t+1},\tz_t),d(\z_t,\tz_t)+d(\z_{t+1},\tz_t)\}\\
   \leq& d(\z_t,\z_{t+1})+d(\z_{t+1},\tz_t)\leq 2d(\z_t,\z_{t+1})+d(\z_t,\tz_t)\leq 3\eta G\\
   \leq &3G\cdot\frac{1}{\sqrt{648K_mG^2}}=\frac{1}{6\sqrt{2}\cdot\sqrt{K_m}}\\
   \coloneqq& \frac{c}{\sqrt{K_m}}
\end{split}
\]
By Lemma \ref{lem:holo},
    \begin{equation}\label{eq:normpeg2}
    \begin{split}    &\|\G_{\z_{t}}^{\z_{t+1}}\G_{\tz_{t}}^{\z_{t}}\G_{\z_{t+1}}^{\tz_{t}}F(\z_{t+1})-F(\z_{t+1})\|\\
\leq& 36K_m\|F(\z_{t+1})\|\cdot \min\{d(\z_t,\z_{t+1})+d(\z_{t+1},\tz_t),d(\z_t,\tz_t)+d(\z_{t+1},\tz_t)\}\cdot d(\z_{t+1},\tz_t)\\
\leq & 36c\sqrt{K_m}G\cdot d(\tz_t,\z_{t+1})
\end{split}
\end{equation}
Combining Equations \eqref{eq:normpeg1} and \eqref{eq:normpeg2} yields
\begin{equation}\label{eq:normpeg3}
    \begin{split}
        &\|\G_{\tz_{t}}^{\z_{t}}F(\tz_{t})-\G_{\z_{t+1}}^{\z_{t}}F(\z_{t+1})\|^2\\
        \stackrel{(1)}{\leq} &2\|\G_{\tz_{t}}^{\z_{t}}F(\tz_{t})-\G_{\tz_{t}}^{\z_{t}}\G_{\z_{t+1}}^{\tz_{t}}F(\z_{t+1})\|^2+2\|\G_{\z_{t}}^{\z_{t+1}}\G_{\tz_{t}}^{\z_{t}}\G_{\z_{t+1}}^{\tz_{t}}F(\z_{t+1})-F(\z_{t+1})\|^2\\
        \stackrel{(2)}{\leq}&(2L^2+2592c^2 K_mG^2)d(\tz_{t},\z_{t+1})^2\\
        \stackrel{(3)}{\leq}&(2L^2+2592c^2 K_mG^2)\cdot 4\|\invexp_{\z_{t}}\tz_{t}-\invexp_{\z_{t}}\z_{t+1}\|^2\\
        =&(8L^2+10368c^2 K_mG^2)\eta^2\|\G_{\tz_t}^{\z_t}F(\tz_{t})-\G_{\tz_{t-1}}^{\z_{t}}F(\tz_{t-1})\|^2,
    \end{split}
\end{equation}
where the first inequality is due to $\|\a+\b\|^2\leq 2(\|\a\|^2+\|\b\|^2)$, the second is by Assumption \ref{ass:lips2} and Lemma \ref{lem:holo}, the third is due to Lemma \ref{lem:trig}. Thus, we have
\begin{equation}\label{eq:pegeq1}
    0\leq \la F(\z_{t})-\G_{\z_{t+1}}^{\z_{t}}F(\z_{t+1}),\G_{\tz_{t}}^{\z_{t}}F(\tz_{t})\ra
\end{equation}
and
\begin{equation}\label{eq:pegeq2}
    \|\G_{\tz_{t}}^{\z_{t}}F(\tz_{t})-\G_{\z_{t+1}}^{\z_{t}}F(\z_{t+1})\|^2\leq (8L^2+10368 c^2K_mG^2)\eta^2 \|\G_{\tz_{t-1}}^{\z_{t}} F(\tz_{t-1})-\G_{\tz_{t}}^{\z_{t}}F(\tz_{t})\|^2.
\end{equation}
Adding two times of Equation \eqref{eq:pegeq1} and three times of Equation \eqref{eq:pegeq2} together yields
\[
\begin{split}
    &3\|\G_{\tz_{t}}^{\z_{t}}F(\tz_{t})-\G_{\z_{t+1}}^{\z_{t}}F(\z_{t+1})\|^2\\
    \leq& 2\la F(\z_{t})-\G_{\z_{t+1}}^{\z_{t}}F(\z_{t+1}),\G_{\tz_{t}}^{\z_{t}}F(\tz_{t})\ra+3(8L^2+10368c^2 K_mG^2)\eta^2 \|\G_{\tz_{t-1}}^{\z_{t}} F(\tz_{t-1})-\G_{\tz_{t}}^{\z_{t}}F(\tz_{t})\|^2\\
    =&\|F(\z_{t})\|^2-\|F(\z_{t})-\G_{\tz_{t}}^{\z_{t}}F(\tz_{t})\|^2+\|\G_{\z_{t+1}}^{\z_{t}}F(\z_{t+1})-\G_{\tz_{t}}^{\z_{t}}F(\tz_{t})\|^2-\|F(\z_{t+1})\|^2\\
    &+3(8L^2+10368c^2 K_mG^2)\eta^2 \|\G_{\tz_{t-1}}^{\z_{t}} F(\tz_{t-1})-\G_{\tz_{t}}^{\z_{t}}F(\tz_{t})\|^2\\
\end{split}
\]
where we use $2\la \a,\b\ra=\|\a\|^2+\|\b\|^2-\|\a-\b\|^2$ holds for $\a,\b$ in the same tangent space. Rearranging, we obtain
\begin{equation}\label{eq:decreasenorm}
   \begin{split}
        &\|F(\z_{t+1})\|^2+2\|\G_{\tz_{t}}^{\z_{t}}F(\tz_{t})-\G_{\z_{t+1}}^{\z_{t}}F(\z_{t+1})\|^2\\
        \leq &\|F(\z_{t})\|^2-\|F(\z_{t})-\G_{\tz_{t}}^{\z_{t}}F(\tz_{t})\|^2+3(8L^2+10368c^2 K_mG^2)\eta^2\|\G_{\tz_{t-1}}^{\z_{t}} F(\tz_{t-1})-\G_{\tz_{t}}^{\z_{t}}F(\tz_{t})\|^2.
   \end{split}
\end{equation}
Applying $-\|\a-\b\|^2\leq-\frac{1}{1+\alpha}\|\a\|^2+\frac{1}{\alpha}\|\b\|^2$ with $\a=\G_{\tz_{t}}^{\z_{t}}F(\tz_{t})-\G_{\tz_{t-1}}^{\z_{t}}F(\tz_{t-1})$, $\b=F(\z_{t})-\G_{\tz_{t-1}}^{\z_{t}}F(\tz_{t-1})$ and $\alpha=\frac{1}{2}$, we have
\[
-\|F(\z_{t})-\G_{\tz_{t}}^{\z_{t}}F(\tz_{t})\|^2\leq-\frac{2}{3}\|\G_{\tz_{t-1}}^{\z_{t}}F(\tz_{t-1})-\G_{\tz_{t}}^{\z_{t}}F(\tz_{t})\|^2+2\|F(\z_{t})-\G_{\tz_{t-1}}^{\z_{t}}F(\tz_{t-1})\|^2.
\]
Plugging into Equation \eqref{eq:decreasenorm}, rearranging, we have
\begin{equation}\label{eq:pepauxi1}
    \begin{split}        \|F(\z_{t+1})\|^2+2\|\G_{\tz_{t}}^{\z_{t}}F(\tz_{t})-\G_{\z_{t+1}}^{\z_{t}}F(\z_{t+1})\|^2\leq &\|F(\z_{t})\|^2+2\|F(\z_{t})-\G_{\tz_{t-1}}^{\z_{t}}F(\tz_{t-1})\|^2\\
        &+3\lt(8L^2+10368c^2 K_mG^2\eta^2-\frac{2}{9}\rt)\|\G_{\tz_{t}}^{\z_{t}}F(\tz_{t})-\G_{\tz_{t-1}}^{\z_{t}}F(\tz_{t-1})\|^2.
    \end{split}
\end{equation}
We are nearing the completion of the proof, but a subtle issue arises. The left-hand side (LHS) of Equation \eqref{eq:pepauxi1} should feature
$2\|F(\z_{t+1})-\G_{\tz_{t}}^{\z_{t+1}}F(\tz_{t})\|^2$ as opposed to $2\|\G_{\tz_{t}}^{\z_{t}}F(\tz_{t})-\G_{\z_{t+1}}^{\z_{t}}F(\z_{t+1})\|^2$ to better suit the subsequent Lyapunov analysis. Fortunately, this discrepancy can be rectified by taking a closer look at the holonomy effect and bounding it appropriately. To that end, we present the following calculations:

\begin{equation}\label{eq:pepauxi2}
    \begin{split}
        &\|\G_{{\tz}_{t}}^{\z_{t+1}}F(\tz_{t})-F(\z_{t+1})\|^2=\|\G_{\tz_{t}}^{\z_{t+1}}F(\tz_{t})-\G_{\tz_{t}}^{\z_{t+1}}\G_{\z_{t}}^{\tz_{t}}\G_{\z_{t+1}}^{\z_{t}}F(\z_{t+1})\|^2\\
        &+\la 2\G_{\tz_{t}}^{\z_{t+1}}F(\tz_{t})-F(\z_{t+1})-\G_{\tz_{t}}^{\z_{t+1}}\G_{\z_{t}}^{\tz_{t}}\G_{\z_{t+1}}^{\z_{t}}F(\z_{t+1}),\G_{\tz_{t}}^{\z_{t+1}}\G_{\z_{t}}^{\tz_{t}}\G_{\z_{t+1}}^{\z_{t}}F(\z_{t+1})-F(\z_{t+1})\ra\\
        \leq& \|\G_{\tz_{t}}^{\z_{t+1}}F(\tz_{t})-\G_{\tz_{t}}^{\z_{t+1}}\G_{\z_{t}}^{\tz_{t}}\G_{\z_{t+1}}^{\z_{t}}F(\z_{t+1})\|^2+2\la\G_{\tz_{t}}^{\z_{t+1}}F(\tz_{t})-F(\z_{t+1}),\G_{\tz_{t}}^{\z_{t+1}}\G_{\z_{t}}^{\tz_{t}}\G_{\z_{t+1}}^{\z_{t}}F(\z_{t+1})-F(\z_{t+1})\ra\\
        =& \|\G^{\z_{t}}_{\tz_{t}}F(\tz_{t})-\G_{\z_{t+1}}^{\z_{t}}F(\z_{t+1})\|^2+2\la\G_{\tz_{t}}^{\z_{t+1}}F(\tz_{t})-F(\z_{t+1}),\G_{\tz_{t}}^{\z_{t+1}}\G_{\z_{t}}^{\tz_{t}}\G_{\z_{t+1}}^{\z_{t}}F(\z_{t+1})-F(\z_{t+1})\ra\\
        \leq&\|\G^{\z_{t}}_{\tz_{t}}F(\tz_{t})-\G_{\z_{t+1}}^{\z_{t}}F(\z_{t+1})\|^2+2L\cdot d(\tz_{t},\z_{t+1})\cdot \|F(\z_{t+1})-\G_{\tz_{t}}^{\z_{t+1}}\G_{\z_{t}}^{\tz_{t}}\G_{\z_{t+1}}^{\z_{t}}F(\z_{t+1})\|\\
        \leq &\|\G^{\z_{t}}_{\tz_{t}}F(\tz_{t})-\G_{\z_{t+1}}^{\z_{t}}F(\z_{t+1})\|^2+2L\cdot 36c\sqrt{K_m}G d(\tz_{t},\z_{t+1})^2\\
        \leq &\|\G^{\z_{t}}_{\tz_{t}}F(\tz_{t})-\G_{\z_{t+1}}^{\z_{t}}F(\z_{t+1})\|^2+2L\cdot 36c\sqrt{K_m}G\cdot 4\|\invexp_{\z_{t}}\tz_{t}-\invexp_{\z_{t}}\z_{t+1}\|^2\\
        =&\|\G^{\z_{t}}_{\tz_{t}}F(\tz_{t})-\G_{\z_{t+1}}^{\z_{t}}F(\z_{t+1})\|^2+288cGL\sqrt{K_m} \eta^2\|\G_{\tz_{t-1}}^{\z_{t}}F(\tz_{t-1})-\G_{\tz_{t}}^{\z_{t}}F(\tz_{t})\|^2
    \end{split}
\end{equation}
where the first equality and the first inequality follows from
\[
\|\a-\b\|^2=\|\a-\c\|^2+\la 2\a-\b-\c,\c-\b\ra\leq\|\a-\c\|^2+2\la\a-\b,\c-\b\ra
\]
holds for $\a,\b,\c\in T_{\z_{t+1}}\M$.
Combining Equations \eqref{eq:pepauxi1} and \eqref{eq:pepauxi2}, we get
\[
\begin{split}
    &\|F(\z_{t+1})\|^2+2\|F(\z_{t+1})-\G_{\tz_{t}}^{\z_{t+1}}F(\tz_{t})\|^2\\
    \leq&\|F(\z_{t})\|^2+2\|F(\z_{t})-\G_{\tz_{t-1}}^{\z_{t}}F(\tz_{t-1})\|^2\\
    &+\lt(\lt(24L^2+31104c^2K_mG^2+576cGL\sqrt{K_m}\rt)\eta^2-\frac{2}{3}\rt)\|\G_{\tz_{t}}^{\z_{t}}F(\tz_{t})-\G_{\tz_{t-1}}^{\z_{t}}F(\tz_{t-1})\|^2\\
    =&\|F(\z_{t})\|^2+2\|F(\z_{t})-\G_{\tz_{t-1}}^{\z_{t}}F(\tz_{t-1})\|^2\\
    &+\lt(\lt(24L^2+432K_mG^2+48GL\sqrt{2K_m}\rt)\eta^2-\frac{2}{3}\rt)\|\G_{\tz_{t}}^{\z_{t}}F(\tz_{t})-\G_{\tz_{t-1}}^{\z_{t}}F(\tz_{t-1})\|^2\\
\end{split}
\]
where for the equality, we plug in $c=\frac{1}{6\sqrt{2}}$.
\end{proof}
\subsection{Proof of Lemma \ref{lem:bestpeg}}
\begin{proof}
Similar to the proof of Lemma \ref{lem:peg}, we define $c\coloneqq\frac{1}{6\sqrt{2}}$. Since $\eta\leq \frac{1}{\sqrt{36L^2+648K_mG^2+72\sqrt{2K_m}GL}}\leq\frac{1}{\sqrt{648K_mG^2}}$, by Lemma \ref{lem:pegnorm}, Lemma \ref{lem:cos2},  the definition of $\Phi_t$, and an analog of Equation \eqref{eq:pegsigma1}, we have
    \begin{equation}\label{eq:iterpeg1}
        \begin{split}
             \Phi_{t+1}-\Phi_t=&d(\z_{t+1},\z^*)^2+\lambda (t+1)\eta^2\lt(\|F(\z_{t+1})\|^2+2\|F(\z_{t+1})-\G_{\tz_{t}}^{\z_{t+1}}F(\tz_{t})\|^2\rt)\\
    &-\lt(d(\z_{t},\z^*)^2+\lambda t\eta^2\lt(\|F(\z_{t})\|^2+2\|F(\z_{t})-\G_{\tz_{t-1}}^{\z_{t}}F(\tz_{t-1})\|^2\rt)\rt)\\
    \leq &2\la\invexp_{\z_{t+1}}\z_t,\invexp_{\z_{t+1}}\z^*\ra-\bs\eta^2\|F(\tz_t)\|^2\\
    &+\lambda\eta^2(\|F(\z_t)\|^2+2\|F(\z_t)-\G_{\tz_{t-1}}^{\z_t}F(\tz_{t-1})\|^2)\\   &+\lambda(t+1)\lt(\lt(24L^2+432K_mG^2+48GL\sqrt{2K_m}\rt)\eta^2-\frac{2}{3}\rt)\|\G_{\tz_{t}}^{\z_{t}}F(\tz_{t})-\G_{\tz_{t-1}}^{\z_{t}}F(\tz_{t-1})\|^2.
        \end{split}
    \end{equation}

    By $\eta\leq \frac{1}{\sqrt{36L^2+648K_mG^2+72\sqrt{2K_m}GL}}$, we have
    \[
    \lt(\lt(24L^2+432K_mG^2+48GL\sqrt{2K_m}\rt)\eta^2-\frac{2}{3}\rt)\leq 0,
    \]
    thus, the last term on the RHS of Equation \eqref{eq:iterpeg1} vanishes. Note that since $\eta$ satisfies the requirement in Lemma \ref{lem:peg}, by Equation \eqref{eq:peg4}, we have
    \begin{equation}\label{eq:peg5}
        \begin{split} 
        &d(\z_{t+1},\z^*)^2-d(\z_{t},\z^*)^2\leq 2\la\invexp_{\z_{t+1}}\z_{t},\invexp_{\z_{t+1}}\z^*\ra-\bs \cdot d(\z_{t},\z_{t+1})^2\\
            \leq&(144cLD\sqrt{K_m}+4\bzt L)\eta^3\cdot \|F(\tz_t)\|\cdot(2\|F(\tz_{t-1})\|+\|F(\tz_{t-2})\|)-\bs\eta^2\|F(\tz_t)\|^2\\
            \leq& 8\lt(144cL\frac{31D}{30}\sqrt{K_m}+4\bzt L\rt)\eta^3\|F(\tz_t)\|^2-\bs\eta^2\|F(\tz_t)\|^2,
        \end{split}
    \end{equation}

    By $\eta\leq\frac{1}{32L}$ and Lemmas \ref{lem:peggrad1} and \ref{lem:peggrad2}, we also have
    \[
    \|F(\z_t)\|\leq(1+2L\eta)\|F(\tz_t)\|\leq\frac{17}{16}\|F(\tz_t)\|
    \]
    and 
    \[
    \begin{split}
        \|F(\z_t)-\G_{\tz_{t-1}}^{\z_t}F(\tz_{t-1})\|\leq& L d(\z_t,\tz_{t-1})\leq L(d(\z_t,\z_{t-1})+d(\z_{t-1},\tz_{t-1})\\
        =& L\eta(\|F(\tz_{t-1})\|+\|F(\tz_{t-2})\|)\leq 6L\eta\|F(\tz_t)\|.
    \end{split}
    \]
    Thus,
    \begin{equation}\label{eq:bestpeg4}
        \begin{split}
            &\lambda\eta^2\|F(\z_t)\|^2+2\lambda\eta^2\|F(\z_t)-\G_{\tz_{t-1}}^{\z_t}F(\tz_{t-1})\|^2\\
            \leq&\frac{289}{256}\lambda\eta^2\|F(\tz_t)\|^2+2\lambda\eta^2\cdot 36L^2\eta^2\|F(\tz_t)\|^2\\
            =&\lt(\frac{289}{256}\lambda\eta^2+72\lambda L^2\eta^4\rt)\|F(\tz_t)\|^2.
        \end{split}
    \end{equation}
    Combining Equations \eqref{eq:iterpeg1}, \eqref{eq:peg5} and \eqref{eq:bestpeg4} and choosing $\lambda=\frac{\bs}{16}$, we have
    \[
    \begin{split}
        \Phi_{t+1}-\Phi_t\leq& 8\lt(144cL\frac{31D}{30}\sqrt{K_m}+4\bzt L\rt)\eta^3\|F(\tz_t)\|^2-\bs\eta^2\|F(\tz_t)\|^2+\lt(\frac{289}{256}\lambda\eta^2+72\lambda L^2\eta^4\rt)\|F(\tz_t)\|^2\\
        \leq& 8\lt(144cL\frac{31D}{30}\sqrt{K_m}+4\bzt L\rt)\eta^3\|F(\tz_t)\|^2-\bs\eta^2\|F(\tz_t)\|^2+\lt(\frac{\bs}{14}\eta^2+\frac{9\bs}{64}L\eta^3\rt)\|F(\tz_t)\|^2\\
        \leq &\lt(\lt(141LD\sqrt{K_m}+32\bzt L+\frac{9\bs L}{64}\rt)\eta^3-\frac{13\bs\eta^2}{14}\rt)\cdot\|F(\tz_t)\|^2\\
    \end{split}
    \]
    where we recall $c=\frac{1}{6\sqrt{2}}$. Now, we find by taking
    \[
    \eta=\frac{\bs}{152LD\sqrt{K_m}+35\bzt L},
    \]
    $\Phi_{t+1}\leq\Phi_t$ holds. This requirement is always satisfied because
    \[
    \eta\leq\min\lt\{\frac{\bs}{152LD\sqrt{K_m}+35\bzt L},\frac{1}{\sqrt{36L^2+648K_mG^2+72\sqrt{2K_m}GL}}\rt\}\leq \frac{\bs}{152LD\sqrt{K_m}+35\bzt L}
    \]
    always holds.
\end{proof}
\subsection{Proof of Theorem \ref{thm:peglast}}
\begin{proof}
By Lemma \ref{lem:bestpeg}, we have
\[
\begin{split}
    &d(\z_T,\z^*)^2+\frac{\bs}{16}T\eta^2\lt(\|F(\z_T)\|^2+2\|F(\z_T)-\G_{\tz_{T-1}}^{\z_T}F(\tz_{T-1})\|^2\rt)\\
    =&\Phi_T\leq\Phi_{T-1}\leq\dots\leq \Phi_0=d(\z_0,\z^*)^2\leq D^2.
\end{split}
\]
Thus,
\[
\|F(\z_T)\|^2\leq D^2\cdot\frac{16}{\bs T\eta^2}=\mhl{O\lt(\frac{\bzt^2}{\bs^3T}\rt)}
\]
and $\|F(\z_T)\|=\mhl{O\lt(\frac{\bzt}{\sqrt{\bs^3T}}\rt)}$.

\end{proof}
\subsection{Proof of Theorem \ref{thm:rpeg}}
Similar to REG, we need the following auxillary lemma to establish the average-iterate convergence of RPEG.
\begin{lemma}\label{lem:pegprimal}
    Under Assumptions \ref{ass:rm},  \ref{ass:lips2} and \ref{ass:grad}. For the iterates of RPEG as in Equation \eqref{eq:rpeg} with 
    \[
    \eta\leq\min\lt\{\frac{\bs}{192LD\sqrt{2K_m}+32\bzt L}, \frac{1}{\sqrt{648K_mG^2}}\rt\},
    \]
    we have
    \[
    d(\z_{t+1},\z)^2-d(\z_t,\z)^2\leq 2\eta\la F(\tz_t),\invexp_{\tz_t}\z\ra.
    \]
    holds for any $t\geq 0$ and $\z\in\B(\z^*,D)$ where $\B(\z^*,D)$ denotes the geodesic ball with center $\z^*$ and radius $D$.
\end{lemma}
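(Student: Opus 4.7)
The plan is to mirror the proof of Lemma \ref{lem:peg} almost verbatim, replacing $\z^*$ with an arbitrary $\z\in\B(\z^*,D)$ and retaining the term $2\eta\la F(\tz_t),\invexp_{\tz_t}\z\ra$ instead of discarding it via the $F(\z^*)=0$ trick plus monotonicity. The chosen step-size is already strong enough to apply Lemma \ref{lem:peg} (hence $d(\z_s,\z^*)\leq D$ for all $s\geq 0$) and Lemmas \ref{lem:peggrad1}--\ref{lem:peggrad3} and \ref{lem:pegtraj}, so all the basic estimates on distances and gradient norms along the trajectory are already in place.

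Concretely, I would start from the Riemannian cosine law (Lemma \ref{lem:cos2}) to get
\[
d(\z_{t+1},\z)^2-d(\z_t,\z)^2 \leq 2\la\invexp_{\z_{t+1}}\z_t,\invexp_{\z_{t+1}}\z\ra - \sigma(K,\tau)\cdot d(\z_t,\z_{t+1})^2,
\]
where $\tau = d(\z_t,\z_{t+1})+\min\{d(\z_t,\z),d(\z_{t+1},\z)\}$. Using the triangle inequality with $\z\in\B(\z^*,D)$ and $d(\z_t,\z^*)\leq D$, $\tau$ is bounded by roughly $2D$, so by monotonicity of $\sigma(K,\cdot)$ the curvature coefficient is at most $\sigma(K, c\sqrt{2}D)\geq \bs$ after slightly enlarging the relevant ball (this is the source of the extra $\sqrt{2}$ and the jump from $141$ to $192$ in the step-size denominator relative to Lemma \ref{lem:peg}). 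Then I decompose the inner product exactly as in Equations \eqref{eq:peg1}--\eqref{eq:peg3}: split off the holonomy discrepancy $\G_{\z_t}^{\z_{t+1}}\G_{\tz_t}^{\z_t}F(\tz_t)-\G_{\tz_t}^{\z_{t+1}}F(\tz_t)$ and apply Lemma \ref{lem:holo} on the triangle $\triangle \z_t\z_{t+1}\tz_t$, bounding it by $O(\sqrt{K_m})\|F(\tz_t)\|\cdot d(\tz_t,\z_{t+1})\cdot d(\z_{t+1},\z)$ and then using Lemma \ref{lem:peggrad3} to obtain an $O(\eta^3\sqrt{K_m}DL)$ factor. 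For the remaining term $2\eta\la\G_{\tz_t}^{\z_{t+1}}F(\tz_t),\invexp_{\z_{t+1}}\z\ra$, I transport back to $T_{\tz_t}\M$ and separate:
\[
2\eta\la F(\tz_t),\G_{\z_{t+1}}^{\tz_t}\invexp_{\z_{t+1}}\z - \invexp_{\tz_t}\z\ra + 2\eta\la F(\tz_t),\invexp_{\tz_t}\z\ra,
\]
bounding the first part by Lemma \ref{lem:hessian} with constant $\bzt$ (verifying $\tau' = d(\tz_t,\z_{t+1})+\min\{d(\tz_t,\z),d(\z_{t+1},\z)\}\leq \frac{7D}{5}$ just as in the proof of Lemma \ref{lem:peg}) and keeping the second part as the target quantity.

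Combining everything and invoking Lemma \ref{lem:peggrad1} to replace $2\|F(\tz_{t-1})\|+\|F(\tz_{t-2})\|$ by a constant multiple of $\|F(\tz_t)\|$, the estimate collapses to
\[
d(\z_{t+1},\z)^2-d(\z_t,\z)^2 \leq \bigl(C_1\eta^3 - \bs\eta^2\bigr)\|F(\tz_t)\|^2 + 2\eta\la F(\tz_t),\invexp_{\tz_t}\z\ra,
\]
with $C_1 = O(\sqrt{K_m}DL + \bzt L)$. The choice $\eta\leq \bs/(192LD\sqrt{2K_m}+32\bzt L)$ is calibrated so that $C_1\eta \leq \bs$, which kills the first bracket and yields the desired inequality. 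The main obstacle is purely bookkeeping: tracking that the distances $d(\z_{t+1},\z)$, $d(\tz_t,\z)$ (now of order up to $2D$ rather than $\tfrac{31D}{30}$) and the corresponding arguments of $\sigma$ and $\zeta$ are still covered by the constants $\bs$ and $\bzt$ from Definition \ref{def:def}, and that the resulting step-size requirement really does match the one stated. The geometric structure of the argument is otherwise identical to Lemma \ref{lem:peg}.
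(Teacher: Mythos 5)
Your proposal matches the paper's own proof essentially verbatim: the paper replays Equations \eqref{eq:pegbest1}, \eqref{eq:peg2}, \eqref{eq:peg3} with $\z^*$ replaced by a general $\z\in\B(\z^*,D)$, keeps the term $2\eta\la F(\tz_t),\invexp_{\tz_t}\z\ra$ rather than discarding it by monotonicity, uses $d(\z_{t+1},\z)\leq 2D$ inside the holonomy bound, and calibrates $\eta$ so the coefficient of $\|F(\tz_t)\|^2$ is nonpositive. Two small notes on your constant-chasing: the $\sqrt{2}$ in $192LD\sqrt{2K_m}$ is not a relaxation of the $\sigma$ argument but simply the factor $c=\tfrac{1}{6\sqrt{2}}$ being absorbed into $\sqrt{2K_m}$ once $d(\z_{t+1},\z)\leq\tfrac{31D}{30}$ is replaced by $2D$; and the ``bookkeeping'' worry you raise about whether $\bs$ and $\bzt$ still cover the now-larger $\sigma$ and $\zeta$ arguments is a genuine one that the paper itself glosses over --- for $K>0$ the cosine-law argument $d(\z_t,\z_{t+1})+\min\{d(\z_t,\z),d(\z_{t+1},\z)\}$ can approach $2D$, which exceeds $\tfrac{91D}{81}$, so $\sigma(K,\cdot)\geq\bs$ is no longer automatic and strictly speaking $\bs,\bzt$ in Definition \ref{def:def} would need to be set on a ball of radius $\sim 2D$ to make both your argument and the paper's airtight.
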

\begin{proof}
The proof is similar to that of Lemma \ref{lem:peg}.
    Combining Equations \eqref{eq:pegbest1}, \eqref{eq:peg2} and \eqref{eq:peg3}, and replacing $\z^*$ with $\z$, we have
    \begin{equation}\label{eq:pegprim}
        \begin{split}
            &d(\z_{t+1},\z)^2-d(\z_{t},\z)^2\leq 2\la\invexp_{\z_{t+1}}\z_{t},\invexp_{\z_{t+1}}\z\ra-\bs d(\z_{t},\z_{t+1})^2\\
            \le&8\lt(\frac{144}{6\sqrt{2}} L\cdot d(\z_{t+1},\z)\sqrt{K_m}+4\bzt L\rt)\eta^3\|F(\tz_t)\|^2-\bs\eta^2\|F(\tz_t)\|^2+2\eta\la F(\tz_t),\invexp_{\tz_t}\z\ra,\\
        =&\lt(96\sqrt{2}L\cdot d(\z_{t+1},\z)\sqrt{K_m}+4\bzt L\rt)\eta^3\|F(\tz_t)\|^2-\bs\eta^2\|F(\tz_t)\|^2+2\eta\la F(\tz_t),\invexp_{\tz_t}\z\ra,\\
        \leq &\lt(192\sqrt{2}L\cdot D\sqrt{K_m}+4\bzt L\rt)\eta^3\|F(\tz_t)\|^2-\bs\eta^2\|F(\tz_t)\|^2+2\eta\la F(\tz_t),\invexp_{\tz_t}\z\ra,\\
        \end{split}
    \end{equation}
    where the last inequality is due to Lemma \ref{lem:peg} and $d(\z_{t+1},\z)\leq d(\z_{t+1},\z^*)+d(\z^*,\z)\leq 2D$. It is straightforward to see, for any
    \[
    \eta\leq \frac{\bs}{192LD\sqrt{2K_m}+32\bzt L},
    \]
    we have
    \[
    d(\z_{t+1},\z)^2-d(\z_t,\z)^2\leq 2\eta\la F(\tz_t),\invexp_{\tz_t}\z\ra,
    \]
        which concludes the proof.
\end{proof}
Now, we can start to prove Theorem \ref{thm:rpeg}.
\begin{proof}
The proof closely parallels that of Theorem \ref{thm:reg}, but we provide details for the sake of completeness.
Denote $\z=\big(\begin{smallmatrix}
    \x\\ \y\end{smallmatrix}\big)$ and $\Z=\B(\z^*,D)$. We start with the last-iterate convergence, 
\[
\begin{split}
    &\max_{\y\in\Y}f(\x_T,\y)-\min_{\x\in\X}f(\x,\y_T)\leq\max_{\y\in\Y}\la\nabla_{\y}f(\x_T,\y_T),\invexp_{\y_T}\y\ra+\max_{\x\in\X}\la\nabla_{\x}f(\x_T,\y_T),-\invexp_{\x_T}\x\ra\\
    =&\max_{\z\in\Z}\la -F(\z_T),\invexp_{\z_T}\z\ra
    \leq \|F(\z_T)\|\cdot\max_{\z\in\Z}d(\z_T,\z)\leq \|F(\z_T)\|\cdot\lt(d(\z_T,\z^*)+\max_{\z\in\Z}d(\z^*,\z)\rt)\\
    \leq& \|F(\z_T)\|\cdot 2D=\mhl{O\lt(\frac{\bzt}{\sqrt{\bs^3T}}\rt)}.
\end{split}
\]
where the last equality is due to Theorem \ref{thm:peglast}. For the average-iterate convergence, 
\[
\begin{split}
    f(\bx_T,\y)-f(\x,\by_T)\stackrel{(1)}{\leq}&\frac{1}{T}\lt(\sumt f(\tx_t,\y)-\sumt f(\x,\ty_t)\rt)\\
    \stackrel{(2)}{\leq}&\frac{1}{T}\sumt\la-F(\tz_t),\invexp_{\tz_t}\z\ra\\
    \stackrel{(3)}{\leq}&\frac{1}{2\eta T}\sumt\lt(d(\z_t,\z)^2-d(\z_{t+1},\z)^2\rt)\\
    \stackrel{(4)}{\leq}&\frac{d(\z_0,\z)^2}{2\eta T}\leq \frac{(2D)^2}{2\eta T}=\frac{2D^2}{\eta T}=\mhl{O\lt(\frac{\bzt}{\bs T}\rt)},
\end{split}
\]
where the first inequality comes from a nested application of Jensen's inequality for gsc-convex functions:
\[
f(\bx_t,\y)\leq\frac{1}{t}f(\tx_t,\y)+\frac{t-1}{t}f(\bx_{t-1},\y),
\] the second is by the gsc-convexity, and the third comes from Lemma \ref{lem:pegprimal}. We also note that
\[
    \eta\leq\min\lt\{\frac{\bs}{192LD\sqrt{2K_m}+35\bzt L},\frac{1}{\sqrt{36L^2+648K_mG^2+72\sqrt{2K_m}GL}}\rt\}
    \]
    satisfies the requirement for $\eta$ as specified in Lemma \ref{lem:pegprimal}.
\end{proof}
\section{Technical Lemmas}
\subsection{Best-iterate Convergence of RCEG}
\label{app:best}
For completeness, we provide the $O\left(\frac{1}{\sqrt{T}}\right)$ rate for the best-iterate convergence of RCEG as follows. The proof is inspired by Proposition 5 of \citet{martinez2023minimax}.

\begin{thm}
    Consider a Riemannian manifold $\M$ with sectional curvature in $[\kappa,K]$, $D=d(\z_0,\z^*)$. If $K>0$, we require that $D<\frac{2\pi}{2\sqrt{K}}$. Let $\bzt=\zeta(\k,\frac{3D}{2})$ and $\bs=\sigma(K,\frac{3D}{2})$ be geometric constants defined in Lemma \ref{lem:hessian} and Lemma \ref{lem:cos2}. With $\eta\leq \sqrt{\frac{\bs}{4\bzt L^2}}$, RCEG defined by
    \[
   \begin{split}
        &\tz_t=\expmap_{\z_t}(-\eta F(\z_t))\\
        &\z_{t+1}=\expmap_{\tz_t}\lt(-\eta F(\tz_t)+\invexp_{\tz_t}\z_t\rt).
   \end{split}
    \]
    
    achieves $O\lt(\frac{1}{\sqrt{T}}\rt)$ best-iterate convergence for Riemannian variational inequality problems.
    
\end{thm}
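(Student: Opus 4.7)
The plan is to exploit an exact algebraic identity that RCEG's metric-compatible correction term enables, which eliminates the holonomy distortions that complicate the analyses of REG and ROGDA. Because $\tz_t$ lies on the geodesic emanating from $\z_t$ in the direction $-\eta F(\z_t)$, parallel transport along the \emph{reversed} geodesic gives the exact equality $\invexp_{\tz_t}\z_t = \eta\,\G_{\z_t}^{\tz_t}F(\z_t)$, with no curvature correction of any order. Combined with the second line of the RCEG update, this yields the key identity $\invexp_{\tz_t}\z_{t+1} = \eta\bigl(\G_{\z_t}^{\tz_t}F(\z_t) - F(\tz_t)\bigr)$, so Assumption~\ref{ass:lips2} immediately delivers the higher-order bound $d(\tz_t,\z_{t+1}) \leq \eta^2 L\|F(\z_t)\|$.

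The first step applies the cosine law Lemma~\ref{lem:hessian} to the geodesic triangle with vertex $\tz_t$ and remaining vertices $\z_{t+1}, \z^*$, producing an upper bound $d(\z_{t+1},\z^*)^2 - d(\tz_t,\z^*)^2 \leq \bzt\, d(\tz_t,\z_{t+1})^2 - 2\la \invexp_{\tz_t}\z^*,\, \invexp_{\tz_t}\z_{t+1}\ra$. Substituting the RCEG update decomposes the cross term into $2\eta\la F(\tz_t),\, \invexp_{\tz_t}\z^*\ra$, which is non-positive by Assumption~\ref{ass:mono2} combined with $F(\z^*) = 0$, together with a residual $-2\la \invexp_{\tz_t}\z^*,\, \invexp_{\tz_t}\z_t\ra$. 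The companion cosine law Lemma~\ref{lem:cos2}, applied to the triangle $(\tz_t,\z_t,\z^*)$, then bounds this residual by $d(\z_t,\z^*)^2 - d(\tz_t,\z^*)^2 - \bs\, d(\tz_t,\z_t)^2$, after which the intermediate quantity $d(\tz_t,\z^*)^2$ cancels.

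Assembling these estimates with $d(\tz_t,\z_t) = \eta\|F(\z_t)\|$ and the higher-order bound on $d(\tz_t,\z_{t+1})$ yields the one-step descent $d(\z_{t+1},\z^*)^2 \leq d(\z_t,\z^*)^2 - \eta^2\bigl(\bs - \bzt \eta^2 L^2\bigr)\|F(\z_t)\|^2$. The hypothesis $\eta \leq \sqrt{\bs/(4\bzt L^2)}$ is precisely calibrated so that $\bs - \bzt\eta^2 L^2 \geq \tfrac{3\bs}{4}$, giving strict monotone descent; an induction on $t$ confirms $d(\z_t,\z^*) \leq D$ throughout, which justifies the use of the geometric constants $\bs,\bzt$ evaluated at $\tfrac{3D}{2}$. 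Telescoping the recursion over $t = 0,\dots,T-1$ delivers $\sum_{t=0}^{T-1}\|F(\z_t)\|^2 \leq 4 D^2/(3\bs \eta^2)$, whence $\min_{t \in [T]} \|F(\z_t)\| = O(1/\sqrt{T})$, the claimed best-iterate rate. The main obstacle is not conceptual but geometric bookkeeping: one must verify that the relevant triangle diameters fit inside the radii at which $\sigma(K,\cdot)$ and $\zeta(\k,\cdot)$ remain the asserted constants (in particular, the intermediate point $\tz_t$ satisfies $d(\tz_t,\z^*) \leq (1+\eta L)D$, which stays below $\tfrac{3D}{2}$ for the permitted step-size), and one must align the sign conventions in the two cosine laws so that the monotonicity cancellation proceeds cleanly.
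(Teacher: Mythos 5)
Your proposal is correct and takes essentially the same route as the paper: reverse the geodesic to get the exact identity $\invexp_{\tz_t}\z_t = \eta\G_{\z_t}^{\tz_t}F(\z_t)$, apply the two Riemannian cosine laws to the triangles $(\tz_t,\z_{t+1},\z^*)$ and $(\tz_t,\z_t,\z^*)$, use monotonicity plus $F(\z^*)=0$ to kill the inner-product cross term, cancel $d(\tz_t,\z^*)^2$, and telescope. One small slip: the first inequality you invoke, $d(\z_{t+1},\z^*)^2 \leq \bzt d(\tz_t,\z_{t+1})^2 + d(\tz_t,\z^*)^2 - 2\la\invexp_{\tz_t}\z_{t+1},\invexp_{\tz_t}\z^*\ra$, is the lower-curvature cosine law Lemma~\ref{lem:cos1}, not Lemma~\ref{lem:hessian} (the latter bounds the tangent-vector discrepancy $\|\invexp_{\x}\y-\G_{\z}^{\x}\invexp_{\z}\y\|$, not the squared distance).
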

\begin{proof}
    We use mathematical induction to establish $d(\z_t,\z^*)\leq D$ holds for any $t\geq 0$. The base case $t=0$ is straightforward. Assuming that $d(\z_t,\z^*) \leq D$ holds, we proceed to show that
    
    \begin{equation}\label{eq:traj}
            \begin{split}
        d(\tz_t,\z^*)&\leq d(\tz_t,\z_t)+d(\z_t,\z^*)\\
        &=\eta\|F(\z_t)\|+d(\z_t,\z^*)\\
        &=\eta\|F(\z_t)-\G_{\z^*}^{\z_t}F(\z^*)\|+d(\z_t,\z^*)\\
        &\leq \eta L d(\z_t,\z^*)+d(\z_t,\z^*)\\
        &=(1+\eta L)d(\z_t,\z^*)\\
        &\leq\frac{3}{2}d(\z_t,\z^*),
    \end{split}
    \end{equation}
    where the second inequality is due to the L-Lipschitzness of $F$ and the third inequality follows from $\eta\leq \sqrt{\frac{\bs}{4\bzt L^2}}\leq\frac{1}{2L}$. 
    
    Now by Equation \eqref{eq:traj}, Lemma \ref{lem:cos1} and Lemma \ref{lem:cos2},
    \begin{equation}\label{eq:best1}
            \begin{split}
        &2\la\invexp_{\tz_t}\z^*,\invexp_{\tz_t}\z_{t+1}-\invexp_{\tz_t}\z_t\ra\\
        \leq &\bzt d(\tz_t,\z_{t+1})^2-\bs d(\tz_t,\z_t)^2+d(\z^*,\z_t)^2-d(\z^*,\z_{t+1})^2\\
        =&\bzt\eta^2\|F(\tz_t)-\G_{\z_t}^{\tz_t}F(\z_t)\|^2-\bs d(\tz_t,\z_t)^2+d(\z^*,\z_t)^2-d(\z^*,\z_{t+1})^2\\
        \leq &\lt(\bzt\eta^2L^2-\bs\rt)d(\tz_t,\z_t)^2+d(\z^*,\z_t)^2-d(\z^*,\z_{t+1})^2.\\
    \end{split}
    \end{equation}
On the other hand, we have
\begin{equation}\label{eq:best2}
    \begin{split}
        &\la\invexp_{\tz_t}\z^*,\invexp_{\tz_t}\z_{t+1}-\invexp_{\tz_t}\z_t\ra\\
        =&\la\invexp_{\tz_t}\z^*,-\eta F(\tz_t)\ra\\
        =&\eta\la\G_{\z^*}^{\tz_t}F(\z^*)-F(\tz_t),\invexp_{\tz_t}\z^*\ra \geq 0.
    \end{split}
\end{equation}
Combining Equation \eqref{eq:best1} and Equation \eqref{eq:best2}, we have
\begin{equation}\label{eq:best3}
    d(\z_t,\z^*)^2\geq d(\z_{t+1},\z^*)^2+(\bs-\bzt\eta^2L^2)d(\tz_t,\z_t)^2.
\end{equation}

Given that $\bzt\eta^2L^2 \leq \bs$, it follows that $d(\z_{t+1},\z^*) \leq d(\z_t,\z^*) \leq D$, which completes the induction step. Summing over Equation \eqref{eq:best3}, we obtain
\[
\begin{split}
    d(\z_0,\z^*)^2\geq&(\bs-\bzt\eta^2L^2)\sum_{t=0}^{T-1}d(\tz_t,\z_t)^2\\
    =&\eta^2(\bs-\bzt\eta^2L^2)\sum_{t=0}^{T-1}\|F(\z_t)\|^2\\
    \geq&\eta^2(\bs-\bzt\eta^2L^2)T\cdot \min_{t'\in[T]}\|F(\z_t')\|^2,
\end{split}
\]
where the final inequality demonstrates that the best-iterate convergence rate of RCEG is $O\left(\frac{\sqrt{\bzt}}{\bs\sqrt{T}}\right)$.
\end{proof}

\subsection{Miscellaneous Technical Lemmas}
\label{app:tech}
\begin{lemma}\label{lem:trig}
   For a Riemannian manifold $\M$ with sectional curvature in $[\k,K]$ and a geodesic triangle on $\M$ with vertices $\x,\y,\z$. If $K>0$, we require the maximum side length is smaller than $\frac{\pi}{\sqrt{K}}$. If $\kappa<0$, we assume
   \[
   \max\{d(\x,\z),d(\y,\z)\}\leq\frac{1}{\sqrt{-\kappa}},
   \]
   then we have
   \[
   d(\x,\y)\leq 2\cdot\|\invexp_{\z}\x-\invexp_{\z}\y\|.
   \]
\end{lemma}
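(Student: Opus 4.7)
The plan is to estimate $d(\x,\y)$ from above by the length of a specific curve connecting $\x$ to $\y$, chosen so that its length is naturally controlled by $\|\invexp_\z\x - \invexp_\z\y\|$. Set $\u = \invexp_\z\x$ and $\v = \invexp_\z\y$, both in $T_\z\M$, and consider the curve $\gamma(t) = \expmap_\z((1-t)\u + t\v)$ for $t \in [0,1]$. Since $\gamma(0)=\x$ and $\gamma(1)=\y$, we have
\[
d(\x,\y)\le L(\gamma) = \int_0^1 \|\mathrm{d}\expmap_\z|_{w(t)}(\v-\u)\|\,\mathrm{d}t,
\]
where $w(t)=(1-t)\u+t\v\in T_\z\M$.

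The main step is to bound the integrand by Rauch's comparison theorem. On a manifold with sectional curvature bounded below by $\k$, one has
\[
\|\mathrm{d}\expmap_\z|_{w}(\eta)\|\;\le\; \max\bigl\{1,\tfrac{\sinh(\|w\|\sqrt{-\k})}{\|w\|\sqrt{-\k}}\bigr\}\,\|\eta\|,
\]
where only the second term can exceed $1$, and only when $\k<0$. By convexity of the tangent-space norm, $\|w(t)\|\le\max\{\|\u\|,\|\v\|\}\le 1/\sqrt{-\k}$ for every $t\in[0,1]$; together with the monotonicity of $s\mapsto\sinh(s)/s$ on $(0,\infty)$ this yields the pointwise bound $\|\mathrm{d}\expmap_\z|_{w(t)}(\v-\u)\|\le\sinh(1)\,\|\v-\u\|$. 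Integrating over $[0,1]$ gives $d(\x,\y)\le \sinh(1)\,\|\u-\v\|\approx 1.175\,\|\u-\v\|<2\,\|\u-\v\|$. For $\k\ge 0$ the Rauch factor is simply $1$ and the inequality is immediate. The upper curvature bound $K$, together with the hypothesis that the maximum side length is less than $\pi/\sqrt{K}$, is used only to ensure that the tangent-space segment $t\mapsto w(t)$ stays within the injectivity radius, so that $\gamma$ is well-defined and $\expmap_\z$ has no conjugate points along it.

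The main obstacle is the clean invocation of Rauch's theorem under the stated curvature hypotheses: one has to decompose $\v-\u$ into its components parallel and perpendicular to $w(t)$, note that the parallel component is unaffected by $\mathrm{d}\expmap_\z$ (since $\expmap_\z$ is a radial isometry), and observe that the perpendicular component picks up exactly the $\sinh(\cdot)/\cdot$ Jacobi-field factor supplied by Rauch. Everything else is elementary monotonicity, and the constant $2$ in the lemma has substantial slack over the sharp value $\sinh(1)$, which makes the statement robust and easy to apply later (for instance when bounding $d(\tz_t,\z_{t+1})$ from tangent-space differences of operator values in the proofs of Lemmas~\ref{lem:eggrad1} and~\ref{lem:peggrad3}).
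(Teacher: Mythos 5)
Your proof is correct and takes the same Rauch-comparison route as the paper: the paper simply cites Proposition~B.2 of \citet{ahn2020nesterov} for the bound $d(\x,\y)\le \frac{\sinh(\sqrt{-\kappa}\,r)}{\sqrt{-\kappa}\,r}\,\|\invexp_\z\x-\invexp_\z\y\|$ with $r=\max\{d(\x,\z),d(\y,\z)\}$, which you instead derive explicitly by integrating $\|\mathrm{d}\expmap_\z\|$ along the pushed-forward tangent segment, and both arguments then close with $\sinh(1)\le 2$. One small terminological slip: the $K>0$ side-length hypothesis is what keeps the radial geodesics from $\z$ short of the first conjugate point (the condition Rauch actually requires), not the injectivity radius, though this does not affect the argument.
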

\begin{proof}
    By Proposition B.$2$ in \citet{ahn2020nesterov} and Rauch Comparison Theorem, we have
    \[
    d(\x,\y)\leq\left\{
    \begin{array}{ll}
    \frac{\sinh(\sqrt{-\kappa}\max\{d(\x,\z),d(\y,\z)\})}{\sqrt{-\kappa}\max\{d(\x,\z),d(\y,\z)\}}\cdot \|\invexp_{\z}\x-\invexp_{\z}\y\| & \kappa<0\\
    \|\invexp_{\z}\x-\invexp_{\z}\y\| & \kappa\geq 0.\\
    \end{array}
\right.
    \]
    Since $\frac{\sinh x}{x}$  is monotonically increasing with respect to $x$ and 
    \[
    \max\{d(\x,\z),d(\y,\z)\}\leq\frac{1}{\sqrt{-\kappa}},
    \]
    we have 
    \[
    \frac{\sinh(\sqrt{-\kappa}\max\{d(\x,\z),d(\y,\z)\})}{\sqrt{-\kappa}\max\{d(\x,\z),d(\y,\z)\}}\leq \frac{\sinh 1}{1}\leq 2,
    \]
    which completes the proof.
\end{proof}
\begin{lemma}\citep[Theorem 7.11]{lee2018introduction}, \citep[Lemma 11]{wang2023riemannian}\label{lem:gauss}
For a Riemannian manifold $\M$ with sectional curvature in $[\k,K]$, denote $\Lambda(s,t):[0,1]\times[0,1]\rightarrow\M$ to be a rectangle map and $\l$ to be the geodesic loop connecting $\Lambda(0,0)$, $\Lambda(0,1)$, $\Lambda(1,1)$ and $\Lambda(1,0)$. We also define $K_m=\max(|\k|,|K|)$, vector fields $S(s,t)=\Lambda_{\star}\frac{\partial}{\partial s}(s,t)$ and $T(s,t)=\Lambda_{\star}\frac{\partial}{\partial t}(s,t)$. 
Then for any $\u\in T_{\Lambda(0,0)}\M$, we have
\[
\|\G_{\l}\u-\u\|\leq 12 K_m\|\u\|\cdot \int_0^1\int_0^1\|T\|\cdot\|S\|ds dt,
\]
where $\G_{\l}$ is the parallel transport along the geodesic loop $\l$.

\end{lemma}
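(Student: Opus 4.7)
The plan is to express the holonomy defect $\G_{\l}\u - \u$ as a double integral over the rectangle $[0,1]^2$ of the Riemann curvature tensor applied to the variation fields $S$ and $T$, and then to bound the Riemann tensor pointwise in terms of the sectional curvature bounds. This is the standard textbook route to quantifying holonomy, and the factor $12K_m$ comes from combining (i) the classical identity that relates iterated parallel transport around a shrinking rectangle to $R(S,T)$, with (ii) a pointwise bound of the form $\|R(X,Y)Z\| \leq c\,K_m\,\|X\|\|Y\|\|Z\|$ obtained by polarizing the sectional curvature.

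First, I would introduce the family of loops $\{\l_s\}_{s\in[0,1]}$, where $\l_s$ traverses $\Lambda$-coordinate paths around the sub-rectangle $[0,s]\times[0,1]$. Let $P(s)\in \mathrm{End}(T_{\Lambda(0,0)}\M)$ be parallel transport around $\l_s$, so that $P(0)=\mathrm{id}$ and $P(1)=\G_{\l}$ (modulo identifying the geodesic loop with the $\Lambda$-rectangular loop; the two agree when the sides of $\Lambda$ are the connecting geodesics, which is the setting in which the lemma is invoked). Then I would compute $\frac{d}{ds}\,P(s)\u$ by differentiating the parallel transport along the varying side, invoking the fundamental curvature identity
\[
\frac{D}{\partial s}\frac{D}{\partial t}V \;-\; \frac{D}{\partial t}\frac{D}{\partial s}V \;=\; R(S,T)\,V,
\]
applied to $V(s,t)$ defined as the parallel transport of $\u$ first along $\Lambda(\cdot,0)$ up to $(s,0)$ and then along $\Lambda(s,\cdot)$ up to $(s,t)$. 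Telescoping cancellations on the two horizontal sides then yield
\[
P(s)\u - \u \;=\; \int_0^{s}\!\!\int_0^{1} \widetilde{R}(s',t)\,\u \; dt\,ds',
\]
where $\widetilde{R}(s',t)\u$ is the curvature endomorphism $R(S,T)$ composed with appropriate parallel transports back to $T_{\Lambda(0,0)}\M$. Taking norms and using that parallel transport is an isometry reduces the problem to bounding $\|R(S,T)\u\|$ pointwise.

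The second ingredient is the curvature bound: under $\k \leq K_{\sec} \leq K$, a standard polarization argument (using the symmetries of the Riemann tensor together with the identity expressing $\langle R(X,Y)Y,X\rangle$ in terms of the sectional curvature of the plane $\mathrm{span}\{X,Y\}$) gives $\|R(X,Y)Z\|\leq 12\,K_m\,\|X\|\|Y\|\|Z\|$, with the constant $12$ matching the statement. Plugging this into the integral representation yields
\[
\|\G_{\l}\u - \u\| \;\leq\; 12\,K_m\,\|\u\| \int_0^{1}\!\!\int_0^{1} \|S(s,t)\|\cdot\|T(s,t)\| \, ds\,dt,
\]
which is the claim.

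The main obstacle is not conceptual but bookkeeping: one must carefully identify which parallel transport rescales which curvature evaluation so that the telescoping yields exactly $R(S,T)$ integrated over the rectangle, and then track the polarization constant that produces the factor $12$ from the sectional curvature bound. Since the result is stated here as a direct citation from \citet[Theorem 7.11]{lee2018introduction} and \citet[Lemma 11]{wang2023riemannian}, I would simply invoke those references rather than reproducing the detailed computation.
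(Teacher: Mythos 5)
The paper does not prove this lemma at all: it is stated with explicit citations to \citet[Theorem 7.11]{lee2018introduction} and \citet[Lemma 11]{wang2023riemannian} and used as a black box. You correctly recognize this at the end of your write-up and say you would simply invoke those references, which is exactly what the paper does. Your sketched argument is the standard route and is conceptually right: parametrize a family of loops $\l_s$ bounding $[0,s]\times[0,1]$, differentiate the holonomy $P(s)$ in $s$, use the commutator identity $\frac{D}{\partial s}\frac{D}{\partial t}V-\frac{D}{\partial t}\frac{D}{\partial s}V = R(S,T)V$ to get an integral representation of $\G_{\l}\u-\u$ over the rectangle, and then bound $\|R(S,T)\u\|$ pointwise. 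The only place where you are waving rather than proving is the pointwise estimate $\|R(X,Y)Z\|\le 12\,K_m\,\|X\|\|Y\|\|Z\|$: you assert the constant $12$ comes from a "standard polarization argument," but you do not carry out the polarization, and the constant one obtains depends on the exact scheme and on whether one uses a two-sided sectional-curvature bound or just $|K_{\sec}|\le K_m$. That is a genuine (if minor) gap in your sketch, but since the paper itself treats the result as a citation rather than a lemma to be proved, your decision to defer to the references is the right call and matches the paper.
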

\begin{defn}\label{def:jacobi}
       We define
    \[
    \bm{S}(K, t)=\left\{\begin{array}{ll}
\frac{\sin (\sqrt{K} t)}{\sqrt{K}} & K>0 \\
t & K \leq 0,
\end{array}\right.
    \]
and
    \[
    \bm{s}(\kappa, t)=\left\{\begin{array}{ll}
\frac{\sinh (\sqrt{-\kappa} t)}{\sqrt{-\kappa}} & \kappa<0 \\
t & \kappa \geq 0.
\end{array}\right.
    \]
\end{defn}
\begin{lemma}\citep{lee2018introduction}\label{lem:jacobi}
    Let the sectional curvature of a Riemannian manifold $\M$ be in $[\kappa,K]$, $\gm(t):[0,s]\rightarrow\M$ be a geodesic with unit velocity
    , and $J$ be a Jacobi field along $\gm(t)$. When $K>0$, we assume the length of $\gm(t)$ is smaller than $\frac{\pi}{\sqrt{K}}$.
    Then
    \[
    \bm{S}(K, t)\|\nabla_{\dot{\gm}}J(\gm(0))\|\leq\|J(\gm(t))\|\leq \bm{s}(\k, t)\|\nabla_{\dot{\gm}}J(\gm(0))\|,
    \]
    where $\bm{s}(\k,t)$ and $\bm{S}(K,t)$ are defined in  Definition \ref{def:jacobi}.
\end{lemma}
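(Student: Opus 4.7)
This is the classical Rauch comparison theorem specialized to initial condition $J(\gamma(0)) = 0$, which is implicit in the statement since the bounds reference only $\nabla_{\dot{\gamma}} J(\gamma(0))$. The plan is to decompose $J$ into components tangential and orthogonal to $\dot{\gamma}$, derive a pointwise ODE comparison for the length of the normal component against the constant-curvature model space, and then apply a Wronskian argument for the lower bound together with an index-form argument for the upper bound.

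First I would set $J(\gamma(0)) = 0$ and, using linearity of the Jacobi equation $\nabla_{\dot{\gamma}}^2 J + R(J,\dot{\gamma})\dot{\gamma} = 0$, normalize so that $\|\nabla_{\dot{\gamma}} J(\gamma(0))\| = 1$. Splitting $J = J^{\top} + J^{\perp}$ along and transverse to $\dot{\gamma}$, the identity $R(\dot{\gamma},\dot{\gamma})\dot{\gamma} = 0$ forces $J^{\top}(t)$ to be linear in $t$, so the length bounds reduce to analyzing $f(t) := \|J^{\perp}(\gamma(t))\|$. On any open interval where $f > 0$, differentiating $f^{2} = \langle J^{\perp}, J^{\perp}\rangle$ twice and substituting the Jacobi equation yields
\[
f f'' + (f')^{2} = \|\nabla_{\dot{\gamma}} J^{\perp}\|^{2} - \mathrm{sec}(J^{\perp}, \dot{\gamma})\cdot f^{2}.
\]
Combined with Cauchy--Schwarz $(f')^{2} \leq \|\nabla_{\dot{\gamma}} J^{\perp}\|^{2}$ (which uses $f' = \langle \nabla_{\dot{\gamma}} J^{\perp}, J^{\perp}\rangle/f$), this gives the pointwise inequality $f''(t) \geq -K f(t)$ whenever the sectional curvature is upper-bounded by $K$.

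For the lower bound, set $u(t) = \bm{S}(K,t)$, which satisfies $u'' + K u = 0$ with $u(0) = 0$ and $u'(0) = 1$. The Wronskian $W(t) = f(t) u'(t) - f'(t) u(t)$ satisfies $W(0) = 0$ and $W' = f u'' - f'' u \leq -Kfu + Kfu = 0$, so $W \leq 0$; hence $(f/u)' \geq 0$, and since $f/u \to 1$ as $t \to 0^{+}$, I conclude $f(t) \geq \bm{S}(K,t)$. For the upper bound $f \leq \bm{s}(\kappa,t)$, the Cauchy--Schwarz slack $\|\nabla_{\dot{\gamma}} J^{\perp}\|^{2} - (f')^{2}$ can be strictly positive, so the analogous pointwise ODE argument does not close. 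I would instead invoke the variational characterization of Jacobi fields via the index form $I(V,V) = \int_0^{t} (\|\nabla_{\dot{\gamma}} V\|^{2} - \langle R(V,\dot{\gamma})\dot{\gamma}, V\rangle)\, ds$: on any interval free of conjugate points, a Jacobi field minimizes $I$ among vector fields with the same endpoint values, so transferring a constant-curvature-$\kappa$ model Jacobi field to $\gamma$ by parallel transport and comparing $I$-values delivers the bound.

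The main obstacle is verifying that $\gamma([0,t])$ contains no conjugate points, as required by the index-form minimization. Under $K > 0$ this follows from the hypothesis that the length of $\gamma$ is strictly less than $\pi/\sqrt{K}$ together with the already-established strict positivity $f \geq \bm{S}(K,t) > 0$ on that interval; under $K \leq 0$ the no-conjugate-point property is automatic, since the upper bound $\mathrm{sec} \leq K \leq 0$ alone forbids them. Finally, if $\nabla_{\dot{\gamma}} J(\gamma(0))$ also carries a tangential component, I reassemble $\|J\|^{2} = \|J^{\top}\|^{2} + \|J^{\perp}\|^{2}$ and use the elementary bounds $\bm{S}(K,t) \leq t \leq \bm{s}(\kappa,t)$ to recover the stated two-sided inequality in its unnormalized form.
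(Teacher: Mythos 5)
The paper does not supply its own proof of this lemma; it is quoted directly from \citet{lee2018introduction} as a known Rauch/Jacobi-field comparison theorem. Your reconstruction is correct and matches the standard argument one finds there: you correctly infer the implicit hypothesis $J(\gamma(0))=0$ (without which the stated bounds are false, e.g.\ for a nonzero parallel $J$), split $J$ into tangential and normal parts, derive $f''\geq -Kf$ for $f=\|J^\perp\|$ from the Jacobi equation and Cauchy--Schwarz, obtain the lower bound by a Sturm/Wronskian comparison, obtain the upper bound from index-form minimality of Jacobi fields on a conjugate-point-free segment, and reassemble via $\bm{S}(K,t)\le t\le\bm{s}(\kappa,t)$; the no-conjugate-point verification via $f\geq\bm{S}(K,t)>0$ for $K>0$ (and automatically for $K\le 0$) is the right way to close the loop.

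One small imprecision worth flagging: you assert that $u(t)=\bm{S}(K,t)$ solves $u''+Ku=0$, but by Definition~\ref{def:jacobi} the paper sets $\bm{S}(K,t)=t$ when $K\le 0$, which does not solve that ODE when $K<0$; symmetrically, $\bm{s}(\kappa,t)=t$ for $\kappa\ge 0$ does not solve $v''+\kappa v=0$ when $\kappa>0$. This does not damage your argument. For $K<0$ the Wronskian still satisfies $W'=fu''-f''u=-f''u\le 0$ since $u''=0$, $u=t\ge 0$, and $f''\ge -Kf\ge 0$; alternatively, the genuine comparison solution $\sinh(\sqrt{-K}\,t)/\sqrt{-K}$ dominates $t$. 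Likewise on the other side, the constant-curvature-$\kappa$ index-form comparison gives $\|J(t)\|\le\big(\sin(\sqrt{\kappa}\,t)/\sqrt{\kappa}\big)\|\nabla_{\dot\gamma}J(0)\|\le t\,\|\nabla_{\dot\gamma}J(0)\|=\bm{s}(\kappa,t)\|\nabla_{\dot\gamma}J(0)\|$ when $\kappa>0$. In other words, the paper's $\bm{S},\bm{s}$ are deliberate coarsenings of the sharp comparison functions appearing in Lee's theorem, and your argument establishes the sharp version from which the stated one follows a fortiori.
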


\begin{lemma}\citep{wang2023riemannian}\label{lem:jacobi1}
    We denote $K_m=\max(|\k|,|K|)$. For any $0\leq t\leq\frac{1}{\sqrt{K_m}}$, we have $\frac{\bm{s}(\k,t)}{\bm{S}(K,t)}\leq 3$, where $\bm{s}(\k,t)$ and $\bm{S}(K,t)$ are defined in  Definition \ref{def:jacobi}.
\end{lemma}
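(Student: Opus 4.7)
The plan is a straightforward case analysis on the signs of $\k$ and $K$, using the fact that (as lower and upper sectional curvature bounds) $\k\le K$. The hypothesis $t\le 1/\sqrt{K_m}$ guarantees that whenever $\k<0$ we have $\sqrt{-\k}\,t\in[0,1]$ and whenever $K>0$ we have $\sqrt{K}\,t\in[0,1]$, so all the arguments appearing in $\sinh$ and $\sin$ are confined to $[0,1]$.

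First I would treat the generic case $\k<0<K$. Setting $u=\sqrt{-\k}\,t$ and $v=\sqrt{K}\,t$, the explicit $t$ factors cancel and the ratio becomes
\[
\frac{\bm{s}(\k,t)}{\bm{S}(K,t)}=\frac{\sinh(u)/u}{\sin(v)/v}.
\]
Next I would invoke two elementary monotonicity facts: $x\mapsto \sinh(x)/x$ is nondecreasing on $[0,\infty)$, while $x\mapsto \sin(x)/x$ is nonincreasing on $[0,\pi]$ (both via term-by-term inspection of the power series, or equivalently from $\tan x \ge x \ge \sin x$ and $\tanh x \le x \le \sinh x$). Hence the numerator is at most $\sinh(1)$ and the denominator is at least $\sin(1)$, giving $\sinh(1)/\sin(1)\approx 1.397<3$.

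The remaining cases are subsumed by the same idea. If $\k\ge 0$ then $\bm{s}(\k,t)=t$, and the ratio becomes $\sqrt{K}\,t/\sin(\sqrt{K}\,t)=v/\sin(v)\le 1/\sin(1)<3$ when $K>0$; while $K\le 0$ together with $\k\ge 0$ forces $\k=K=0$, where the ratio equals $1$. Symmetrically, if $K\le 0$ and $\k<0$, then $\bm{S}(K,t)=t$ and the ratio reduces to $\sinh(u)/u\le \sinh(1)<3$.

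The proof rests entirely on the two monotonicity facts about $\sinh(x)/x$ and $\sin(x)/x$ plus careful bookkeeping of the piecewise definition in Definition \ref{def:jacobi}; no substantive obstacle arises. The slack between the sharp constant $\sinh(1)/\sin(1)\approx 1.4$ and the stated constant $3$ leaves ample room, which is likely why the authors quote this bound verbatim from prior work rather than optimize it.
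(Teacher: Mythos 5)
Your case analysis is correct: the key monotonicity facts (that $\sinh(x)/x$ increases and $\sin(x)/x$ decreases on the relevant intervals) together with the hypothesis $t\leq 1/\sqrt{K_m}$, which pins $\sqrt{-\k}\,t$ and $\sqrt{K}\,t$ into $[0,1]$, give the sharp bound $\sinh(1)/\sin(1)\approx 1.4<3$, and your handling of the sign cases using $\k\leq K$ is sound. The paper itself does not prove this lemma --- it is quoted directly from \citet{wang2023riemannian} --- so there is no in-paper proof to compare against; your argument is the standard elementary one and is complete, apart from the trivial remark that the ratio at $t=0$ should be understood as the limit, which equals $1$.
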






\begin{lemma}
    \label{lem:cos2}\citep{alimisis2020continuous}
    Let $\M$ be a Riemannian manifold with sectional curvature upper bounded by $K$. Consider a geodesic triangle with side lengths $a,b,c$ such that
    \[
    b+\min\{a,c\}<\left\{
    \begin{array}{ll}
        \frac{\pi}{\sqrt{K}} & K>0 \\
        \infty & K\leq 0.
    \end{array}
\right.
    \]
    Then we have
    \[
a^{2} \geq {\sigma}(K, b+\min\{a,c\}) b^{2}+c^{2}-2 b c \cos A
\]
where 
\[
\sigma(K,\tau)\coloneqq\left\{
    \begin{array}{ll}
        \sqrt{K} \tau \operatorname{cot}(\sqrt{K} \tau) & K> 0 \\
        1 & K\leq 0.
    \end{array}
\right.
\]
\end{lemma}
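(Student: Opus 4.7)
The inequality is a Toponogov-type hinge comparison for manifolds whose sectional curvature is upper bounded by $K$. The plan is to reduce to the model space of constant curvature $K$ via the standard hinge version of Toponogov's comparison theorem, and then extract the factor $\sigma(K, b+\min\{a,c\})$ from the model-space law of cosines by elementary trigonometric manipulation.

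First I would invoke the Toponogov hinge inequality: for a manifold with sectional curvature bounded above by $K$, the third side $a$ of a geodesic triangle with two sides $b, c$ meeting at angle $A$ satisfies $a \geq a_0$, where $a_0$ denotes the third side of the corresponding triangle (same $b, c, A$) in the simply connected space form of constant curvature $K$. The length condition $b+\min\{a,c\} < \pi/\sqrt{K}$ (when $K>0$) ensures we stay inside the injectivity radius of the model sphere, so the hinge comparison is valid. Since $\sigma(K,\cdot)$ is monotonically decreasing on $(0, \pi/\sqrt{K})$ and $a_0 \leq a$, it suffices to prove the inequality with $\tau := b+\min\{a_0,c\}$ in the model space; the original claim then follows. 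The case $K \leq 0$ is immediate, because $\sigma(K,\cdot) \equiv 1$ and the Euclidean/hyperbolic laws of cosines already give $a_0^2 \geq b^2+c^2-2bc\cos A$.

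The substantive work is the spherical case $K>0$, where the exact identity
\[
\cos(\sqrt{K}\,a_0) \;=\; \cos(\sqrt{K}\,b)\cos(\sqrt{K}\,c) + \sin(\sqrt{K}\,b)\sin(\sqrt{K}\,c)\cos A
\]
must be recast as $a_0^2 \geq \sigma(K,\tau)\,b^2 + c^2 - 2bc\cos A$. Using $1-\cos(\sqrt{K}x) = 2\sin^2(\sqrt{K}x/2)$ to convert both sides into squared-sine form, together with the monotonicity bound $\sin(\sqrt{K}x)/(\sqrt{K}x) \geq \sin(\sqrt{K}\tau)/(\sqrt{K}\tau)$ for $x \leq \tau$, one isolates a term proportional to $b^2$ whose coefficient collapses to $\sqrt{K}\tau\cot(\sqrt{K}\tau) = \sigma(K,\tau)$ exactly when $\tau$ dominates both $b$ and one of the other sides, which motivates the choice $\tau = b+\min\{a_0,c\}$.

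The main obstacle is the asymmetry between $b$ and $c$ in the statement: the factor $\sigma$ multiplies only $b^2$, so the proof cannot treat the two adjacent sides symmetrically. One has to split the spherical cosine identity into a ``$b^2$-piece'' and a ``cross piece'', bound each by its Euclidean counterpart times the appropriate curvature factor, and then verify a single-variable calculus inequality on $(0, \pi/\sqrt{K})$ to confirm that $\tau = b+\min\{a_0,c\}$ is indeed large enough to dominate every trigonometric argument appearing in the two pieces. This final scalar estimate, essentially a monotonicity check for $x \mapsto \sqrt{K}x\cot(\sqrt{K}x)$ together with the half-angle comparison $2\sin^2(\sqrt{K}x/2) \geq \sigma(K,\tau)\cdot Kx^2/2$ for $x \leq \tau$, is what pins down the precise form of the constant.
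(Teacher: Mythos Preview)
Your overall strategy---reduce to the constant-curvature model via an upper-curvature hinge comparison, then extract $\sigma(K,\tau)$ from the spherical law of cosines---is sound in principle, but it is a very different route from what the paper does, and the trigonometric step you sketch is left quite vague.

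The paper does not prove this lemma from Toponogov at all. It simply invokes Corollary~2.1 of \citet{alimisis2020continuous}, which already asserts that for a geodesic triangle with vertices $\x,\y,\z$ there is a point $\q$ on the side $\overline{\x\z}$ (of length $b$) with
\[
a^2 \;\geq\; \sigma\bigl(K,\, d(\y,\q)\bigr)\, b^2 + c^2 - 2bc\cos A .
\]
The paper's contribution is then a two-line observation: by the triangle inequality, $d(\y,\q)\leq \min\{c+b,\,a+b\}=b+\min\{a,c\}$, and since $\sigma(K,\cdot)$ is monotonically decreasing, one may replace $d(\y,\q)$ by $b+\min\{a,c\}$ at the cost of weakening the constant. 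That is the entire argument.

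So the comparison is this: the paper treats the Alimisis--Orvieto--Becigneul--Lucchi inequality as a black box and only adds a triangle-inequality step to get the explicit $\tau=b+\min\{a,c\}$; you are effectively proposing to re-derive that black box from scratch. Your approach buys self-containment and conceptual clarity about where the $\sigma$ factor originates, but the spherical-trigonometry manipulation you allude to (``split into a $b^2$-piece and a cross piece'', ``half-angle comparison'') is exactly the nontrivial content of the cited corollary, and your proposal does not actually carry it out. If you want to go this route, you would need to fill in that derivation rigorously; otherwise, the paper's citation-plus-triangle-inequality argument is both shorter and complete.
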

\begin{remark}
Lemma \ref{lem:cos2} is indeed a variant of Corollary 2.1 of \citet{alimisis2020continuous}. The original version therein states: given a geodesic triangle with vertices $\x$, $\y$, and $\z$, and corresponding edge lengths $a$, $b$, and $c$. There exists a point $\q \in \overline{\x\z}$ such that
    \[
    \begin{split}
        a^2\geq\sigma(K,d(\y,\q))b^2+c^2-2bc\cos A.
    \end{split}
    \]
    By the triangle inequality
    \[
    \begin{split}
        d(\y,\q)\leq\min\{d(\x,\y)+d(\x,\z),d(\x,\z)+d(\y,\z)\}=b+\min\{a,c\}.
    \end{split}
    \]
    Thus, by the monotonicity of $\sigma(K,\cdot)$, we have
    \[
    \begin{split}
        a^2\geq&\sigma(K,d(\y,\q))b^2+c^2-2bc\cos A\\
        \geq &\sigma(K,b+\min\{a,c\})b^2+c^2-2bc\cos A.
    \end{split}
    \]
\end{remark}

\begin{lemma}\citep{alimisis2021momentum}\label{lem:hessian}
Assume $\M$ is a Riemannian manifold with sectional curvature in $[\k,K]$. For a geodesic triangle on $\M$ with vertices $\x,\y,\z$ such that 

\[
    \tau\coloneqq d(\x,\z)+\min\{d(\x,\y),d(\y,\z)\}<\left\{
    \begin{array}{ll}
        \frac{\pi}{\sqrt{K}} & K>0 \\
        \infty & K\leq 0.
    \end{array}
\right.
\]
Then
 \begin{enumerate}[label=\arabic*),leftmargin=*]
\item \[
\|\invexp_{\x}\y-\G_{\z}^{\x}\invexp_{\z}\y\|\leq\zeta(\kappa,\tau)\cdot d(\x,\z)
\]
\item \[
\|\invexp_{\x}\y-\G_{\z}^{\x}\invexp_{\z}\y-\invexp_{\x}\z\|\leq\max\{\zeta(\kappa,\tau)-1,1-\sigma(K,\tau)\}\cdot d(\x,\z)
\]
 \end{enumerate}
where
\[
\zeta(\kappa,\tau)\coloneqq\left\{
    \begin{array}{ll}
        \sqrt{-\kappa} \tau \operatorname{coth}(\sqrt{-\kappa} \tau) & \k< 0 \\
        1 & \k\geq 0,
    \end{array}
\right.
\]
and $\sigma(K,\tau)$ is defined in Lemma \ref{lem:cos2}.
\end{lemma}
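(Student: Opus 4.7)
The plan is to bound both quantities by controlling the covariant derivative of the vector field $V(s)\coloneqq\invexp_{\gamma(s)}\y$ along the unit-speed geodesic $\gamma:[0,L]\to\M$ running from $\z$ to $\x$, where $L=d(\x,\z)$. Since parallel transport is an isometry,
\[
\bigl\|\invexp_{\x}\y-\G_{\z}^{\x}\invexp_{\z}\y\bigr\|
= \bigl\|V(L)-\G_{\gamma(0)}^{\gamma(L)} V(0)\bigr\|
\le \int_0^L \Bigl\|\tfrac{DV}{ds}(s)\Bigr\|\,ds,
\]
so the whole task reduces to a pointwise bound on $\|DV/ds\|$.

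To compute $DV/ds$, I will use a Jacobi-field argument. Consider the rectangle map $\alpha(s,t)\coloneqq\expmap_{\gamma(s)}(tV(s))$ on $[0,L]\times[0,1]$: by construction $\alpha(s,0)=\gamma(s)$ and $\alpha(s,1)=\y$, so for each fixed $s$ the curve $t\mapsto\alpha(s,t)$ is a geodesic of speed $\ell(s)\coloneqq d(\gamma(s),\y)\le\tau$. The vector field $J_s(t)\coloneqq\partial_s\alpha(s,t)$ is then a Jacobi field with prescribed boundary data $J_s(0)=\gamma'(s)$ (a unit vector) and $J_s(1)=0$, and the symmetry of the Levi--Civita connection gives $DV/ds=\nabla_t J_s(0)$. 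I will decompose $\gamma'(s)=c\,\hat{T}+W$ into the component tangent to the geodesic to $\y$ and a perpendicular component with $c^2+\|W\|^2=1$, and invoke Rauch's comparison theorem: in a model space of constant curvature $\kappa$ the normal part satisfies $j_\kappa''+\kappa j_\kappa=0$ with $j_\kappa(0)=1$, $j_\kappa(\ell)=0$, which gives $\ell\,|j_\kappa'(0)|=\zeta(\kappa,\ell)$ in the non-positive case, and the upper-curvature comparison produces the companion factor $\sigma(K,\ell)$. Combining the tangential and normal contributions and using monotonicity in $\ell\le\tau$ yields $\|\nabla_t J_s(0)\|\le\zeta(\kappa,\tau)$, and integrating over $s\in[0,L]$ gives statement 1.

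For statement 2, observe that in Euclidean space $J_s(t)=(1-t)\gamma'(s)$, so $\nabla_t J_s(0)=-\gamma'(s)$ identically and the quantity $V(L)-\G_{\gamma(0)}^{\gamma(L)}V(0)-\invexp_{\x}\z$ vanishes. In the curved setting, since $\gamma'$ is parallel along $\gamma$ and $-\int_0^L\G_{\gamma(s)}^{\gamma(L)}\gamma'(s)\,ds=\invexp_{\x}\z$, I can write
\[
V(L)-\G_{\gamma(0)}^{\gamma(L)}V(0)-\invexp_{\x}\z
= \int_0^L \G_{\gamma(s)}^{\gamma(L)}\bigl(\nabla_t J_s(0)+\gamma'(s)\bigr)\,ds,
\]
and the same decomposition shows that only the normal component contributes to $\nabla_t J_s(0)+\gamma'(s)=(\ell j_\kappa'(0)+1)W$. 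The two-sided Rauch bound then gives $\|\nabla_t J_s(0)+\gamma'(s)\|\le\max\{\zeta(\kappa,\tau)-1,\,1-\sigma(K,\tau)\}$, and integrating yields the stated bound. The main obstacle is the Rauch comparison step: I have to handle tangential and normal components of the Jacobi field separately and verify that the comparison is valid all the way to $t=1$, which is why the hypothesis uses the sum $\tau=d(\x,\z)+\min\{d(\x,\y),d(\y,\z)\}$ rather than just $d(\x,\z)$, and tracking the precise constants $\zeta$ and $\sigma$ through this step without introducing slack is the most delicate part of the argument.
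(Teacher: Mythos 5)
The paper cites Lemma \ref{lem:hessian} from \citet{alimisis2021momentum} and gives no internal proof, so there is nothing within this document to compare your attempt against. Your Jacobi-field approach is the standard and correct route: the clean way to see it is that $V(s)=\invexp_{\gamma(s)}\y=-\nabla\bigl(\tfrac12 d(\cdot,\y)^2\bigr)\big|_{\gamma(s)}$, so $DV/ds$ is the negative Hessian of $\tfrac12 d(\cdot,\y)^2$ applied to $\gamma'(s)$, and the two-sided eigenvalue bound $\sigma(K,r)\,g\le\nabla^2\bigl(\tfrac12 d(\cdot,\y)^2\bigr)\le\zeta(\kappa,r)\,g$ yields both items after covariant integration over $s\in[0,L]$.

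One point to correct: you invoke ``Rauch's comparison theorem'' for the step bounding $\|\nabla_t J_s(0)\|$, but Rauch as usually stated treats Jacobi fields that vanish at the starting endpoint and controls $\|J(t)\|$ in terms of $\|\nabla J(0)\|$. Your boundary data are the opposite kind ($J_s(1)=0$ with $J_s(0)$ prescribed), and you need to bound the covariant derivative at $t=0$, not the size of the field. The tool that actually closes this gap is the Hessian comparison theorem for $d(\cdot,\y)$ (equivalently, matrix Riccati comparison for the shape operator of geodesic spheres about $\y$), which has the same Jacobi-field flavor but is not a literal instance of Rauch. Once that substitution is made, the rest of your outline---the tangential/normal split, the model ODE $j_\kappa''+\kappa j_\kappa=0$ with $j_\kappa(0)=1$, $j_\kappa(\ell)=0$ giving $\ell\,|j_\kappa'(0)|=\zeta(\kappa,\ell)$, the observation that the tangential eigenvalue is exactly $1$ so only the normal part survives in item 2, and the triangle-inequality bound $\ell(s)\le\tau$ combined with monotonicity of $\zeta(\kappa,\cdot)$ and $\sigma(K,\cdot)$---is correct and complete.
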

\begin{lemma}
\label{lem:cos1}
\citep[Lemma 5]{zhang2016first}. Let $\mathcal{M}$ be a Riemannian manifold with sectional curvature lower bounded by $\kappa$. Consider a geodesic triangle fully lies within $\M$ with side lengths $a, b, c$, we have
\[
a^{2} \leq {\zeta}(\kappa, c) b^{2}+c^{2}-2 b c \cos A
\]
where ${\zeta}(\kappa, c)$ is defined in Lemma \ref{lem:hessian}.

\end{lemma}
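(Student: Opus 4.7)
The plan is to reduce the inequality to a comparison inside the model space of constant sectional curvature $\kappa$ via Toponogov's hinge theorem. Under the lower bound hypothesis on the sectional curvature of $\M$, Toponogov says that if a hinge in $\M$ has legs of lengths $b,c$ meeting at angle $A$ and opposite side of length $a$, then the corresponding hinge in the simply connected surface $M^{2}_{\kappa}$ of constant sectional curvature $\kappa$ with the same legs and enclosed angle has opposite side $\bar a$ satisfying $a\leq\bar a$. It therefore suffices to prove $\bar a^{2}\leq \zeta(\kappa,c)\,b^{2}+c^{2}-2bc\cos A$ inside the two-dimensional model space, which is a purely trigonometric problem.

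The case $\kappa\geq 0$ is immediate: a non-negative lower bound already yields Euclidean comparison ($\kappa=0$), for which the ordinary law of cosines gives $\bar a^{2}=b^{2}+c^{2}-2bc\cos A$, matching the claim because $\zeta(\kappa,c)=1$ in this regime.

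For $\kappa<0$ the model is the hyperbolic plane of curvature $\kappa$, where the hyperbolic law of cosines reads
\[
\cosh(\sqrt{-\kappa}\,\bar a)=\cosh(\sqrt{-\kappa}\,b)\cosh(\sqrt{-\kappa}\,c)-\sinh(\sqrt{-\kappa}\,b)\sinh(\sqrt{-\kappa}\,c)\cos A.
\]
I would rewrite the right-hand side using the identity $\cosh(x)\cosh(y)-\sinh(x)\sinh(y)\cos A=\cosh(x-y)+\sinh(x)\sinh(y)(1-\cos A)$ together with $\cosh r-1=2\sinh^{2}(r/2)$, and apply $\sinh^{2}(r/2)\geq (r/2)^{2}$ on the left to obtain an upper bound on $\bar a^{2}$ of the form $(c-b)^{2}\cdot g_{1}+2bc(1-\cos A)\cdot g_{2}$, where $g_{1},g_{2}$ are ratios of hyperbolic functions in $\sqrt{-\kappa}\,b$ and $\sqrt{-\kappa}\,c$. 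Invoking the monotonicity of $r\mapsto r\coth r$ and standard inequalities of the form $\sinh(r)\leq r\cdot r\coth r$ on the appropriate range, both $g_{1}$ and $g_{2}$ are controlled by $\zeta(\kappa,c)=\sqrt{-\kappa}\,c\coth(\sqrt{-\kappa}\,c)$. Combining this with the algebraic identity $(c-b)^{2}+2bc(1-\cos A)=b^{2}+c^{2}-2bc\cos A$ and absorbing the leftover slack into the $b^{2}$ term yields the stated bound.

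The main obstacle is this last analytic step: one must arrange the hyperbolic-to-polynomial estimates so that the coefficient of $b^{2}$ is exactly $\zeta(\kappa,c)$ while $c^{2}$ and $-2bc\cos A$ retain unit coefficients. This is delicate, since naive estimates attach a $\zeta$ factor to several terms simultaneously. The standard resolution, as carried out in \citet[Lemma 5]{zhang2016first}, is to assume without loss of generality that $b\leq c$ (by symmetry and monotonicity of $\zeta(\kappa,\cdot)$), so that every pointwise bound can be tightened at $r=\sqrt{-\kappa}\,c$ and all excess is absorbed harmlessly into the $b^{2}$ coefficient alone.
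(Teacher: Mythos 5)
The paper itself does not prove this lemma; it is cited verbatim from \citet[Lemma 5]{zhang2016first}, so your sketch can only be measured against what a correct proof of that external result requires. Your Toponogov reduction to the model space $M^2_\kappa$ is the right first move, and the $\kappa\geq 0$ case is indeed immediate. The problem is the analytic step for $\kappa<0$: the estimate you propose is not "delicate," it is false. After combining the hyperbolic law of cosines with the identity $\cosh x\cosh y-\sinh x\sinh y\cos A=\cosh(x-y)+\sinh x\sinh y\,(1-\cos A)$ and the bound $\cosh(\sqrt{-\kappa}\,\bar a)-1\geq\tfrac{-\kappa}{2}\bar a^2$, you obtain exactly
\[
\bar a^2\ \leq\ (b-c)^2\Bigl(\tfrac{\sinh((x-y)/2)}{(x-y)/2}\Bigr)^2\ +\ 2bc\,(1-\cos A)\,\tfrac{\sinh x}{x}\cdot\tfrac{\sinh y}{y},\qquad x=\sqrt{-\kappa}\,b,\ y=\sqrt{-\kappa}\,c.
\]
Since $\zeta(\kappa,c)b^2+c^2-2bc\cos A=(\zeta-1)b^2+(b-c)^2+2bc(1-\cos A)$, the required inequality becomes $(b-c)^2(g_1-1)+2bc(1-\cos A)(g_2-1)\leq(\zeta(\kappa,c)-1)b^2$ with $g_1,g_2$ as above. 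Take $b=c$ and $A=\pi$: this reads $4\bigl((\sinh y/y)^2-1\bigr)\leq y\coth y-1$, and a Taylor expansion at $y=0$ gives $\tfrac{4}{3}y^2+O(y^4)$ on the left versus $\tfrac{1}{3}y^2+O(y^4)$ on the right, so it fails for \emph{every} $y>0$, not merely for large triangles. The lemma itself remains true there ($\bar a=2c$ and the bound reduces to $4\leq\zeta+3$), which pinpoints the loss: $\bar a^2\leq\tfrac{2}{-\kappa}\bigl(\cosh(\sqrt{-\kappa}\,\bar a)-1\bigr)$ is already strictly weaker than the claimed right-hand side, even perturbatively. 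Relatedly, the auxiliary inequality $\sinh r\leq r\cdot r\coth r$ you invoke is false once $r\gtrsim 2.5$, and the lemma carries no side-length restriction that would allow you to confine attention to small $r$.

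What is missing, in short, is the idea that one cannot trade $\operatorname{arcosh}^2$ for a polynomial at the very first step; the exponential-versus-polynomial mismatch is then unrecoverable by term-by-term estimates. The standard proofs keep $\bar a=\operatorname{arcosh}(\cdot)$ intact and exploit a second-order structure instead: either a calculus argument showing that $g(b)=\bar a(b)^2$ satisfies $g(0)=c^2$, $g'(0)=-2c\cos A$, and $g''(b)\leq 2\zeta(\kappa,c)$ (which must itself be handled carefully since the naive bound $g''\leq 2\bar a\coth\bar a$ has the wrong argument), or equivalently the integral of the Hessian of $\tfrac12 d^2(\cdot,Q)$ along the side of length $b$, using the Jacobi-field comparison that bounds this Hessian by $\zeta(\kappa,\cdot)$. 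Either route is substantially more involved than the term-by-term hyperbolic estimates you outline, and that extra work \emph{is} the content of the lemma.
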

\subsection{Bounding the Holonomy Effect on a Geodesic Triangle}
\label{app:holo}
\begin{lemma}\label{lem:holo}
For a Riemannian manifold $\M$ with sectional curvature in $[\k,K]$ and a geodesic triangle $\triangle \x\y\z$ on $\M$, we denote $K_m=\max\{|\k|,|K|\}$. Then as long as
\[
\min\{d(\x,\y)+d(\y,\z),d(\x,\z)+d(\y,\z)\}\leq\frac{1}{\sqrt{K_m}},
\]
for any $\u\in T_{\x}\M$, we have
\[
\|\G_{\z}^{\x}\G_{\y}^{\z}\G_{\x}^{\y}\u-\u\|\leq 36K_m\|\u\|\cdot\min\{d(\x,\y)+d(\y,\z),d(\x,\z)+d(\y,\z)\}\cdot d(\y,\z).
\]
\end{lemma}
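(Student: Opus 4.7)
The overall plan is to realize the geodesic triangle $\triangle\x\y\z$ as the boundary of a (degenerate) rectangle map and invoke the holonomy bound of Lemma \ref{lem:gauss}. First, I would let $\gm:[0,1]\to\M$ be the constant-speed geodesic from $\y$ to $\z$, so $\|\dot\gm\|\equiv d(\y,\z)$, and form
\[
\Lambda(s,t)\;:=\;\expmap_{\gm(s)}\!\bigl(t\cdot\invexp_{\gm(s)}\x\bigr),\qquad (s,t)\in[0,1]^2.
\]
For each fixed $s$, the $t$-slice $t\mapsto\Lambda(s,t)$ is the constant-speed geodesic from $\gm(s)$ to $\x$, and the four boundary edges of $\Lambda$ are $\Lambda(s,0)=\gm(s)$ (edge $\overline{\y\z}$), $\Lambda(0,t)$ (edge $\overline{\y\x}$), $\Lambda(1,t)$ (edge $\overline{\z\x}$), and the degenerate edge $\Lambda(s,1)\equiv\x$. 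Traversing the boundary therefore traces the triangular loop $\y\to\z\to\x\to\y$, so Lemma \ref{lem:gauss} gives, for every $\v\in T_\y\M$,
\[
\bigl\|\G_\x^\y\G_\z^\x\G_\y^\z\v-\v\bigr\|\;\le\;12K_m\|\v\|\int_0^1\!\int_0^1\|S(s,t)\|\,\|T(s,t)\|\,ds\,dt,
\]
with $S=\partial_s\Lambda$, $T=\partial_t\Lambda$. Choosing $\v=\G_\x^\y\u$ with $\u\in T_\x\M$ and using that parallel transport is an isometry converts the left-hand side into $\|\G_\z^\x\G_\y^\z\G_\x^\y\u-\u\|$ with $\|\v\|=\|\u\|$, which is exactly the quantity in the lemma.

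Next I would control the two factors in the integrand pointwise. Since $t\mapsto\Lambda(s,t)$ is a constant-speed geodesic from $\gm(s)$ to $\x$, one has $\|T(s,t)\|=d(\gm(s),\x)$, and the triangle inequality via either $\y$ or $\z$ yields
\[
\|T(s,t)\|\;\le\;\min\{d(\gm(s),\y)+d(\x,\y),\ d(\gm(s),\z)+d(\x,\z)\}\;\le\;\min\{d(\x,\y)+d(\y,\z),\ d(\x,\z)+d(\y,\z)\}.
\]
In particular each $t$-slice has length $L=d(\gm(s),\x)\le 1/\sqrt{K_m}$ under the lemma's hypothesis, which is the regime in which Lemma \ref{lem:jacobi1} applies. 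For fixed $s$, the field $V(t):=S(s,t)$ is the variation field of the one-parameter family of geodesics $t\mapsto\Lambda(s,t)$, hence a Jacobi field, with boundary data $V(0)=\dot\gm(s)$ (so $\|V(0)\|=d(\y,\z)$) and $V(1)=0$ (because $\Lambda(s,1)\equiv\x$). I would split $V=V^T+V^N$ into tangential and normal parts: the tangential part is affine and $V^T(1)=0$ forces $\|V^T(t)\|\le(1-t)d(\y,\z)$, while for the normal part I would write $V^N=J_0+J_1$, where $J_0$ has initial data $(V^N(0),0)$ and $J_1$ has initial data $(0,w)$ with $w$ determined by $J_1(1)=-J_0(1)$. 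Lemma \ref{lem:jacobi} (used once to upper-bound $\|J_0\|$ by comparison with the constant-curvature model, and once to lower-bound $\|J_1(1)\|$ so as to invert for $\|w\|$), together with $\bm{s}(\k,L)/\bm{S}(K,L)\le 3$ from Lemma \ref{lem:jacobi1}, then produces a pointwise estimate of the form $\|S(s,t)\|\le C\,d(\y,\z)$ with $C$ absolute.

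Combining the two pointwise bounds and integrating gives
\[
\int_0^1\!\int_0^1\|S\|\,\|T\|\,ds\,dt\;\le\;C\,d(\y,\z)\cdot\min\{d(\x,\y)+d(\y,\z),\,d(\x,\z)+d(\y,\z)\},
\]
and multiplying by the prefactor $12K_m\|\u\|$ from Lemma \ref{lem:gauss} yields the stated inequality (with the correct constant matching $36K_m$ after a careful accounting of the Rauch constants).

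I expect the main obstacle to be the Jacobi-field estimate for $S$ in the last part of the second paragraph. Standard Rauch comparison is cleanest when one endpoint value vanishes, whereas here the inhomogeneous boundary pair $V(0)\ne 0$, $V(1)=0$ forces the decomposition $V^N=J_0+J_1$ and the simultaneous use of both directions of the comparison inequality in Lemma \ref{lem:jacobi}. Getting the constant down to the value needed for $36K_m$ (rather than a larger constant) is what makes the smallness hypothesis $L\le 1/\sqrt{K_m}$ and the ratio bound from Lemma \ref{lem:jacobi1} essential; the rest of the argument---the rectangle construction, the identification of the degenerate boundary as a triangle loop, the triangle-inequality bound on $\|T\|$, and the base-point transfer---is essentially formal.
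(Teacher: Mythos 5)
Your overall architecture matches the paper's: a rectangle map $\Lambda$ degenerating on one edge so that its boundary is the triangle loop, then Lemma~\ref{lem:gauss} for the holonomy bound, then pointwise bounds on $\|T\|$ (by the triangle inequality) and $\|S\|$ (by Jacobi--Rauch), finishing with Lemma~\ref{lem:jacobi1}. The base-point transfer via $\v=\G_\x^\y\u$ and the isometry of parallel transport is also correct. The paper differs only in the orientation of the rectangle, and that single difference is exactly what creates the obstacle you flag in your last paragraph.

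The gap is concrete: your $\Lambda$ collapses the edge $t=1$ to the point $\x$, so for each $s$ the variation field $V(t)=S(s,t)$ satisfies $V(1)=0$ but $V(0)=\dot\gm(s)\ne 0$. You then try to bound $V$ by decomposing $V^N=J_0+J_1$ with $J_0(0)=V^N(0)$, $\nabla_{\dot\gm}J_0(0)=0$. But Lemma~\ref{lem:jacobi} only controls Jacobi fields in terms of $\|\nabla_{\dot\gm}J(\gm(0))\|$ -- it implicitly requires $J(\gm(0))=0$ -- so applied to $J_0$ it would give the absurd conclusion $\|J_0(t)\|\le \bm{s}(\k,t)\cdot 0=0$, even though $J_0(0)\ne 0$. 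To handle $J_0$ you would need a different comparison (bounding $\|J\|$ in terms of the initial \emph{value} $\|J(0)\|$, with a $\cosh$-type growth factor), which the paper's toolbox does not supply, and you acknowledge you are unsure the constant comes out to $36K_m$.

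The fix is much simpler than the decomposition: reverse the $t$-direction, i.e., build the rectangle as the paper does, $\Lambda(s,t)=\expmap_{\gm_1(s)}\bigl(t\,\invexp_{\gm_1(s)}\gm_2(s)\bigr)$ with $\gm_1\equiv\x$ constant and $\gm_2$ running along $\overline{\y\z}$. Now the degenerate edge is at $t=0$, so $S(s,0)=0$ and $\|S(s,1)\|=d(\y,\z)$. This is precisely the hypothesis under which Lemma~\ref{lem:jacobi} applies directly (no tangential/normal split, no $J_0+J_1$ decomposition needed, since the bounds of Lemma~\ref{lem:jacobi} also hold for the tangential part because $\bm{S}(K,t)\le t\le\bm{s}(\k,t)$). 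Combining the upper bound at general $t$ with the lower bound at the other endpoint gives $\|S(s,t)\|\le\frac{\bm{s}(\k,b)}{\bm{S}(K,b)}\,d(\y,\z)\le 3\,d(\y,\z)$ by Lemma~\ref{lem:jacobi1}, and $12K_m\cdot 3=36K_m$ is the stated constant. With that orientation swap your argument becomes the paper's proof.
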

\begin{proof}
    We define $\Lambda(s,t):[0,1]\times[0,1]\rightarrow\M$ to be a rectangle map such that
\[
\Lambda(s,t)\coloneqq\expmap_{\gm_1(s)}(t\invexp_{\gm_1(s)}\gm_2(s))
\]
where $\gm_1(s)$ and $\gm_2(s)$ are geodesics which satisfy $\gm_1(0)=\Lambda(0,0)\coloneqq \x$, $\gm_1(1)=\Lambda(1,0)\coloneqq\w$, $\gm_2(0)=\La(0,1)\coloneqq\y$ and $\gm_2(1)=\La(1,1)\coloneqq\z$. Also, we denote
\[
\begin{split}
    &S(s,t)=\Lambda_{\star}\frac{\partial}{\partial s}(s,t)\\
    &T(s,t)=\Lambda_{\star}\frac{\partial}{\partial t}(s,t).
\end{split}
\]

We use $\l$ to denote the geodesic loop starts at $\x$ and consists of geodesic segments $\overline{\x\y}$, $\overline{\y\z}$, $\overline{\z\w}$ and $\overline{\w\x}$, then by Lemma \ref{lem:gauss}, we have
\begin{equation}\label{eq:holo1}
    \|\G_{\l}\u-\u\|\leq 12 K_m\|\u\|\cdot \int_0^1\int_0^1\|T\|\cdot\|S\|ds dt
\end{equation}
By the triangle inequality, $T(s,t)$ simultaneously satisfies
\[
\begin{split}  
\|T(s,t)\|&=d(\gm_1(s),\gm_2(s)) \\
\leq& d(\gm_1(s),\x)+d(\x,\y)+d(\y,\gm_2(s))\\
\leq& d(\w,\x)+d(\x,\y)+d(\y,\z)
\end{split}
\]
and
\[
\begin{split}  
\|T(s,t)\|&=d(\gm_1(s),\gm_2(s)) \\
\leq& d(\gm_1(s),\w)+d(\w,\z)+d(\z,\gm_2(s))\\
\leq& d(\x,\w)+d(\w,\z)+d(\z,\y),
\end{split}
\]
we have
\[
\|T(s,t)\|\leq \min\{d(\w,\x)+d(\x,\y)+d(\y,\z),d(\x,\w)+d(\w,\z)+d(\z,\y)\}
\]
Note that $S$ is a Jacobi field with $\|S(s,0)\|=d(\x,\w)$ and $\|S(s,1)\|=d(\y,\z)$. Now as we set $\w=\x$,
\begin{equation}\label{eq:holo2}
    \begin{split}
    &\|T(s,t)\|\leq\min\{d(\x,\y)+d(\y,\z),d(\x,\z)+d(\y,\z)\}\\
    &\|S(s,0)\|=0\\
    &\|S(s,1)\|=d(\y,\z).
\end{split}
\end{equation}
We define a unit speed geodesic $\gm(t)$ such that $\gm(0)=\gm_1(s)$ and $\gm(b)=\gm_2(s)$, where $b\coloneqq d(\gm_1(s),\gm_2(s))$. Then we find $J(\gm(t))\coloneqq S(s,t/b)$ is a Jacobi field associated with the geodesic $\gm(t)$. By Lemma \ref{lem:jacobi}, for any $t\in[0,b]$, we have
\[
\bm{S}(K,b)\|\nabla_{\dot{\gm}}J(\gm(0))\|\leq \|J(\gm(b))\|
\]
and
\[
\|J(\gm(t))\|\leq \bm{s}(K,t)\|\nabla_{\dot{\gm}}J(\gm(0))\|\leq \bm{s}(K,b)\|\nabla_{\dot{\gm}}J(\gm(0))\|.
\]
Combining the above two inequalities yields
\[
\|J(\gm(t))\| \leq\frac{\bm{s}(\kappa,b)}{\bm{S}(K,b)}\|J(\gm(b))\|=\frac{\bm{s}(\kappa,b)}{\bm{S}(K,b)}\|S(s,1)\|=\frac{\bm{s}(\kappa,b)}{\bm{S}(K,b)} d(\y,\z)
\]
holds for any $t\in[0,b]$,
which is equivalent to
\begin{equation}\label{eq:holo3}
    \lt\|S\lt(s,\frac{t}{b}\rt)\rt\|\leq\frac{\bm{s}(\kappa,b)}{\bm{S}(K,b)}d(\y,\z).
\end{equation}
holds for any $t\in[0,b]$. By combining Equations \eqref{eq:holo1}
, \eqref{eq:holo2} and \eqref{eq:holo3}, we find

\[
\begin{split}
    \|\G_{\z}^{\x}\G_{\y}^{\z}\G_{\x}^{\y}\u-\u\|\leq &12K_m\|\u\|\cdot\min\{d(\x,\y)+d(\y,\z),d(\x,\z)+d(\y,\z)\}\cdot \frac{\bm{s}(\kappa,\|T(s,t)\|)}{\bm{S}(K,\|T(s,t)\|)}d(\y,\z)\\
    \leq& 36K_m\|\u\|\cdot\min\{d(\x,\y)+d(\y,\z),d(\x,\z)+d(\y,\z)\}\cdot d(\y,\z),
\end{split}
\]
where in the second inequality, we apply 
\[
\|T(s,t)\|\leq\min\{d(\x,\y)+d(\y,\z),d(\x,\z)+d(\y,\z)\}\leq\frac{1}{\sqrt{K_m}}
\]
and Lemma \ref{lem:jacobi1}.
\end{proof}
\begin{remark}
In the literature \citep{karcher1977riemannian,sun2019escaping}, a proposition similar to our Lemma \ref{lem:holo} is presented as:
\begin{equation}
    \|\G_{\z}^{\x}\G_{\y}^{\z}\G_{\x}^{\y}\u-\u\| \leq \tC \cdot d(\x,\y) \cdot d(\y,\z) \|\u\|,
\end{equation}
which holds for some constant $\tC$ and for all $\x, \y, \z \in \M$. By comparison, Lemma \ref{lem:holo} explicitly elucidates the dependence of $\tC$ on both the diameter of the geodesic triangle $\triangle \x\y\z$ and the Riemannian curvature.
\end{remark}
\end{document}